\newtheorem{thm}{Theorem}[section]
\newtheorem{defn}[thm]{Definition}
\newtheorem{prop}[thm]{Proposition}
\newtheorem{coro}[thm]{Corollary}
\newtheorem{lem}[thm]{Lemma}
\newtheorem{rem}[thm]{Remark}
\newtheorem{ex}[thm]{Example}
\numberwithin{equation}{subsection}
\begin{document}

\title{A graphical calculus for semi-groupal categories}

\author[1,2]{Xuexing Lu}

\author[1,2]{Yu Ye}

\author[1,2]{Sen Hu}

\affil[1]{\small School of Mathematical Sciences, University of Science and Technology of China}
\affil[2]{Wu Wen-Tsun Key Laboratory of Mathematics, Chinese Academy of Sciences}

\renewcommand\Authands{ and }

\maketitle

\begin{abstract}
Around the year 1988, Joyal and Street established a graphical calculus for monoidal categories, which provides a firm foundation for many explorations of graphical notations in mathematics and physics. For a deeper understanding of their work, we consider a similar graphical calculus for semi-groupal categories.
We introduce two frameworks to formalize this graphical calculus, a topological one based on the notion of a processive plane graph and a combinatorial one based on the notion of a planarly ordered processive graph, which serves as a combinatorial counterpart of a deformation class of processive plane graphs. We demonstrate the equivalence of Joyal and Street's graphical calculus  and the theory of upward planar drawings. We introduce the category of semi-tensor schemes, and give a construction of a free monoidal category on a semi-tensor scheme. We deduce the unit convention as a kind of quotient construction, and show an idea to generalize the unit convention. Finally, we clarify the relation of the unit convention and Joyal and Street's construction of a free monoidal category on a tensor scheme.
\end{abstract}

%\text{\textit{AMS subject classifications}: 18A10, 	05C10}

\textit{Keywords}:  graphical calculus, monoidal category, upward planar graph

%%%%%%%%%%%%%%%%%%%%%%%%%%%%%%%%%%%%%%%%%%%%%%%%%%%%%%%%%%%%%%%%%
\tableofcontents

%%%%%%%%%%%%%%%%%%%%%%%%%%%%%%%%%%%%%%%%%%%%%%%%%%%%%%%%%%%%%%%%%%%%%%%%%%%%%%%%%%%%%%%%%%%%%%%%%%%%%%%%%%%%%555
\section{Introduction}

In \cite{[JS88],[JS91]}, Joyal and Street established a graphical calculus for monoidal categories (also called tensor categories), which provides a firm foundation for many explorations of graphical notations in mathematics and physics. They introduced the notion of a \textbf{progressive plane graph} (commonly known as a \textbf{string diagram}) and showed that the value of a \textbf{diagram} (labelled progressive plane graph) in a monoidal category is invariant under deformations of the underlying progressive plane graph. They also provided a construction of a free monoidal category on a \textbf{tensor scheme} by deformation classes of diagrams. See \cite{[S11]} and Section $2$ of \cite{[W13]} for good introductions.

Their framework of progressive plane graphs is topological, which encodes the abstract laws of tensor calculus in monoidal categories into the topology of progressive plane graphs such that all algebraic constructions relating to tensor calculus depend only on deformation classes. The topological nature is mainly manifested in the following two conventions:  one is the \textbf{identity convention} that drawing an identity morphism as an edge (see Fig \ref{iden}),
\begin{figure}[H]
\centering
$$
\begin{matrix}
\begin{matrix}
  \begin{tikzpicture}[scale=0.7]
\draw [dashed] (-2.5,2) rectangle (-0.5,-0.5);
\node (v1) at (-1.5,2) {};
\node (v2) at (-1.5,-0.5) {};
\node[scale=0.7] at (-1.3,1.5) {$X$};
\node [scale=0.7]at (-1.3,0) {$X$};
\draw  (-1.5,2) --(-1.5,-0.5)[postaction={decorate, decoration={markings,mark=at position .25 with {\arrow[black]{stealth}}}}][postaction={decorate, decoration={markings,mark=at position .75 with {\arrow[black]{stealth}}}}];
\draw[fill] (-1.5,0.75) circle [radius=0.07];
\node [scale=0.7]at (-2,0.7) {$Id_X$};
\end{tikzpicture}
\end{matrix}&=&
\begin{matrix}
 \begin{tikzpicture}[scale=0.7]
\draw [dashed] (-2.5,2) rectangle (-0.5,-0.5);
\node (v1) at (-1.5,2) {};
\node (v2) at (-1.5,-0.5) {};
\draw  (-1.5,2) --(-1.5,-0.5)[postaction={decorate, decoration={markings,mark=at position .5 with {\arrow[black]{stealth}}}}];
\node [scale=0.7]at (-2,0.7) {$Id_X$};
\end{tikzpicture}
\end{matrix}
&=&
\begin{matrix}
 \begin{tikzpicture}[scale=0.7]
\draw [dashed] (-2.5,2) rectangle (-0.5,-0.5);
\node (v1) at (-1.5,2) {};
\node (v2) at (-1.5,-0.5) {};
\draw  (-1.5,2) --(-1.5,-0.5)[postaction={decorate, decoration={markings,mark=at position .5 with {\arrow[black]{stealth}}}}];
\node [scale=0.7]at (-2,0.7) {$X$};
\end{tikzpicture}
\end{matrix}
\end{matrix}
$$
\caption{}
\label{iden}
\end{figure}
\noindent which, together with the \textbf{middle-four-interchange law} $(g\circ f)\otimes(g'\circ f')=(g\otimes g')\circ(f\otimes f'),$ implies the \textbf{level exchange property} (see Fig \ref{lep});
\begin{figure}[H]
\centering
$$
\begin{matrix}
\begin{matrix}
\begin{tikzpicture}[scale=1]
\draw [dashed] (-0.2,1.6) rectangle (1.8,-0.8);

\node (v5) at (0.3,1.6) {};
\node (v6) at (0.3,-0.8) {};
\draw  (0.3,1.6)  -- (0.3,-0.8)[postaction={decorate, decoration={markings,mark=at position .75 with {\arrow[black]{stealth}}}}][postaction={decorate, decoration={markings,mark=at position .1 with {\arrow[black]{stealth}}}}];
\node (v7) at (1.3,1.6) {};
\node (v8) at (1.3,-0.8) {};
\draw(1.3,1.6)  -- (1.3,-0.8)[postaction={decorate, decoration={markings,mark=at position .45 with {\arrow[black]{stealth}}}}][postaction={decorate, decoration={markings,mark=at position .9 with {\arrow[black]{stealth}}}}];

\node [left]at (0.3,0.9) {$f$};

\node[right]at (1.3,-0.2) {$g$};
\draw[fill] (0.3,0.9) circle [radius=0.05];
\draw[fill] (1.3,-0.2) circle [radius=0.05];
\node [scale=0.7]at (0.5,1.3) {$X$};
\node [scale=0.7]at (0.5,0.1) {$Y$};
\node [scale=0.7]at (1.1,0.9) {$X'$};
\node [scale=0.7]at (1.1,-0.5) {$Y'$};
\end{tikzpicture}
\end{matrix}&=&\begin{matrix}
\begin{tikzpicture}[scale=1]
\draw [dashed] (-0.2,1.6) rectangle (1.8,-0.8);

\node (v5) at (0.3,1.6) {};
\node (v6) at (0.3,-0.8) {};
\draw  (0.3,1.6)  -- (0.3,-0.8)[postaction={decorate, decoration={markings,mark=at position .45 with {\arrow[black]{stealth}}}}][postaction={decorate, decoration={markings,mark=at position .9 with {\arrow[black]{stealth}}}}];
\node (v7) at (1.3,1.6) {};
\node (v8) at (1.3,-0.8) {};
\draw(1.3,1.6)  -- (1.3,-0.8)[postaction={decorate, decoration={markings,mark=at position .75 with {\arrow[black]{stealth}}}}][postaction={decorate, decoration={markings,mark=at position .1 with {\arrow[black]{stealth}}}}];

\node [left]at (0.3,-0.2) {$f$};

\node[right]at (1.3,0.9) {$g$};
\draw[fill] (0.3,-0.2) circle [radius=0.05];
\draw[fill] (1.3,0.9) circle [radius=0.05];
\node [scale=0.7]at (0.5,0.7) {$X$};
\node [scale=0.7]at (0.5,-0.5) {$Y$};
\node [scale=0.7]at (1.1,1.3) {$X'$};
\node [scale=0.7]at (1.1,0.1) {$Y'$};
\end{tikzpicture}
\end{matrix}
\end{matrix}
$$
\caption{}
\label{lep}
\end{figure}
\noindent the other is the \textbf{unit convention} (Section $2.3$ of \cite{[BS10]}, Section $1.3$ of \cite{[S12]}) that drawing the unit object and its identity morphism as a blank space (see Fig \ref{unit}),

\begin{figure}[H]
\centering
$$
\begin{matrix}
\begin{matrix}
  \begin{tikzpicture}[scale=0.7]
\draw [dashed] (-2.5,2) rectangle (-0.5,-0.5);
\node (v1) at (-1.5,2) {};
\node (v2) at (-1.5,-0.5) {};
\draw  (-1.5,2) --(-1.5,-0.5)[postaction={decorate, decoration={markings,mark=at position .5 with {\arrow[black]{stealth}}}}];
\node [scale=0.7]at (-2,0.7) {$I$};
\end{tikzpicture}
\end{matrix}&=&
\begin{matrix}
  \begin{tikzpicture}[scale=0.7]
\draw [dashed] (-2.5,2) rectangle (-0.5,-0.5);
\node (v1) at (-1.5,2) {};
\node (v2) at (-1.5,-0.5) {};
\draw [dashed] (-1.5,2) --(-1.5,-0.5)[postaction={decorate, decoration={markings,mark=at position .5 with {\arrow[black]{stealth}}}}];
\node [scale=0.7]at (-2,0.7) {$I$};
\end{tikzpicture}
\end{matrix}&=&
\begin{matrix}
 \begin{tikzpicture}[scale=0.7]
\draw [dashed] (-2.5,2) rectangle (-0.5,-0.5);
\node [scale=0.7]at (-1.5,0.7) {$I$};
\end{tikzpicture}
\end{matrix}&=&\begin{matrix}
 \begin{tikzpicture}[scale=0.7]
\draw [dashed] (-2.5,2) rectangle (-0.5,-0.5);
\draw[fill] (-1.5,0.75) circle [radius=0.07];
\node [scale=0.7]at (-2,0.7) {$Id_I$};
\end{tikzpicture}
\end{matrix}
\end{matrix}
$$
\caption{}
\label{unit}
\end{figure}
\noindent under which the \textbf{unit axiom}  can be represented as Fig \ref{unax}, which, on the level of morphisms, can be represented by the equations $(f\circ Id_I )\otimes( Id_I\circ g)=( f\otimes Id_I)\circ(Id_I\otimes g)=f\circ g=f\otimes g$ and $( Id_I\circ g)\otimes(f\circ Id_I )=( Id_I\otimes f)\circ( g\otimes Id_I)=f\circ g=g\otimes f$.
\begin{figure}[H]
\centering
\begin{tikzpicture}
\node at (-1,0) {=};
\node [left](v2) at (-2,0.5) {$f$};
\draw[fill] (-2,0.5) circle [radius=0.055];
\node [left](v3) at (-2,-0.5) {$g$};
\node[right,scale=0.8] at (-2,0){$I$};
\draw[fill] (-2,-0.5) circle [radius=0.055];
\node [scale=0.8](v1) at (-2,1.5) {$X$};
\node [scale=0.8](v4) at (-2,-1.5) {$Y$};
\draw (v1) -- (-2,0.5)[postaction={decorate, decoration={markings,mark=at position 0.5 with {\arrow[black]{stealth}}}}];
\draw [dashed] (-2,0.5) --  (-2,-0.5)[postaction={decorate, decoration={markings,mark=at position 0.5 with {\arrow[black]{stealth}}}}];
\draw   (-2,-0.5) -- (v4)[postaction={decorate, decoration={markings,mark=at position 0.5 with {\arrow[black]{stealth}}}}];
\node [scale=0.8](v5) at (0,1) {$X$};
\node [left](v6) at (0,0) {$f$};
\draw[fill] (0,0) circle [radius=0.055];
\node [scale=0.8](v7) at (0,-1) {$I$};
\node [scale=0.8](v8) at (0.5,1) {$I$};
\node [right](v9) at (0.5,0) {$g$};
\draw[fill] (0.5,0) circle [radius=0.055];
\node [scale=0.8](v10) at (0.5,-1) {$Y$};
\draw  (v5) -- (0,0)[postaction={decorate, decoration={markings,mark=at position 0.5 with {\arrow[black]{stealth}}}}];
\draw [dashed] (0,0) -- (v7)[postaction={decorate, decoration={markings,mark=at position 0.5 with {\arrow[black]{stealth}}}}];
\draw  [dashed](v8) --(0.5,0)[postaction={decorate, decoration={markings,mark=at position 0.5 with {\arrow[black]{stealth}}}}];
\draw  (0.5,0) -- (v10)[postaction={decorate, decoration={markings,mark=at position 0.5 with {\arrow[black]{stealth}}}}];
\node at (-3,0) {=};
\node [scale=0.8](v14) at (-4,1) {$X$};
\node[right] (v15) at (-4,0) {$f$};
\draw[fill] (-4,0) circle [radius=0.055];
\node [scale=0.8](v16) at (-4,-1) {$I$};
\node[scale=0.8] (v11) at (-4.5,1) {$I$};
\node [left](v12) at (-4.5,0) {$g$};
\draw[fill] (-4.5,0) circle [radius=0.055];
\node[scale=0.8] (v13) at (-4.5,-1) {$Y$};
\draw  [dashed](v11) -- (-4.5,0)[postaction={decorate, decoration={markings,mark=at position 0.5 with {\arrow[black]{stealth}}}}];
\draw  (-4.5,0) -- (v13)[postaction={decorate, decoration={markings,mark=at position 0.5 with {\arrow[black]{stealth}}}}];
\draw  (v14) -- (-4,0)[postaction={decorate, decoration={markings,mark=at position 0.5 with {\arrow[black]{stealth}}}}];
\draw [dashed](-4,0)-- (v16)[postaction={decorate, decoration={markings,mark=at position 0.5 with {\arrow[black]{stealth}}}}];
\end{tikzpicture}
\caption{}
\label{unax}
\end{figure}
\noindent The unit convention also explains the appearance of isolated vertices in progressive plane graphs (see Fig \ref{isolated}).

\begin{figure}[h]
\centering
$$
\begin{matrix}
\begin{matrix}
  \begin{tikzpicture}[scale=0.7]
\draw [dashed] (-2.5,2) rectangle (-0.5,-0.5);
\node (v1) at (-1.5,2) {};
\node (v2) at (-1.5,-0.5) {};
\node[scale=0.7] at (-1.3,1.5) {$I$};
\node [scale=0.7]at (-1.3,0) {$I$};
\draw  (-1.5,2) --(-1.5,-0.5)[postaction={decorate, decoration={markings,mark=at position .25 with {\arrow[black]{stealth}}}}][postaction={decorate, decoration={markings,mark=at position .75 with {\arrow[black]{stealth}}}}];
\draw[fill] (-1.5,0.75) circle [radius=0.07];
\node [scale=0.7]at (-2,0.7) {$\alpha$};
\end{tikzpicture}
\end{matrix}&=&
\begin{matrix}
\begin{tikzpicture}[scale=0.7]
\draw [dashed] (-2.5,2) rectangle (-0.5,-0.5);
\node (v1) at (-1.5,2) {};
\node (v2) at (-1.5,-0.5) {};
\node[scale=0.7] at (-1.3,1.5) {$I$};
\node [scale=0.7]at (-1.3,0) {$I$};
\draw [dashed] (-1.5,2) --(-1.5,-0.5)[postaction={decorate, decoration={markings,mark=at position .25 with {\arrow[black]{stealth}}}}][postaction={decorate, decoration={markings,mark=at position .75 with {\arrow[black]{stealth}}}}];
\draw[fill] (-1.5,0.75) circle [radius=0.07];
\node [scale=0.7]at (-2,0.7) {$\alpha$};
\end{tikzpicture}
\end{matrix}
&=&
\begin{matrix}
 \begin{tikzpicture}[scale=0.7]
\draw [dashed] (-2.5,2) rectangle (-0.5,-0.5);
\draw[fill] (-1.5,0.75) circle [radius=0.07];
\node [scale=0.7]at (-2,0.7) {$\alpha$};
\end{tikzpicture}
\end{matrix}
\end{matrix}
$$
\caption{If $\alpha\not= Id_I$, then it is represented by an isolated vertex.}
\label{isolated}
\end{figure}
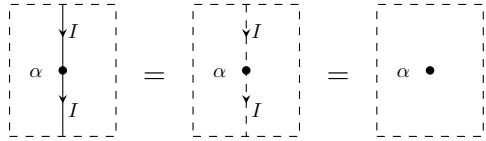

A \textbf{semi-groupal category} is a category equipped with an associative tensor product (but without specifying a unit object), or in other words, a semi-group object in the category $\mathbf{Cat}$ of categories.  A \textbf{semi-groupal functor} is a functor between two semi-groupal categories preserving the tensor products. The category $\mathbf{Semi.Gro}$ of semi-groupal categories and semi-groupal functors and the category $\mathbf{Mon.Cat}$ of monoidal categories and monoidal functors are closely related to each other. In fact, there is an adjunction $\mathfrak{F}:\mathbf{Semi.Gro} \rightleftharpoons \mathbf{Mon.Cat}:\mathfrak{U}$, where $\mathfrak{F}$ freely adjoins  a unit object to a semi-groupal category and $\mathfrak{U}$ treats a monoidal category just as a semi-groupal category.
(As a word of caution, in this paper, to avoid technical problems,  we suppose, when necessary, that all categories are small, and that all tensor products, units, semi-groupal functors and monoidal functors are strict.)

For a deeper understanding of Joyal and Street's work, we consider a graphical calculus for semi-groupal categories, which is same as that of Joyal and Street, except not referring to the unit convention. Same as in a monoidal category, the functorial property of the tensor product of a semi-groupal category can be equivalently represented as the middle-four-interchange law, which, under the identity convention, appears as the level change property in the graphical calculus.
It is easy to see that the graphical calculus for semi-groupal categories is also topological.

To formalize this graphical calculus, we introduce two frameworks, one is topological, similar as that of Joyal and Street, and the other is totally combinatorial.  In the topological framework, the central notion is that of a \textbf{processive plane graph} (\textbf{PPG} for short, Definition \ref{progre}); and in the combinatorial framework, the central notion is that of a \textbf{planarly ordered processive graph} (\textbf{POP-graph} for short, Definition \ref{pop-graph}), which servers as a combinatorial counterpart of a deformation class of processive plane graphs (\textbf{PPG-class}, for short). A concrete description of composition of POP-graphs (Definition \ref{com} and Theorem \ref{crux}) should be a key result in this framework. As two formalizations of one graphical calculus, the two frameworks have exactly the same structure.

To show the equivalence of the two frameworks, we introduce two monoidal categories, one is the monoidal category $\mathbf{PPG}$ of PPG-classes in the topological framework, and the other is the monoidal category $\mathcal{POP}$ of POP-graphs in the combinatorial framework. They are both free on the tensor scheme $\mathbf{PRM}$ of prime PPG-classes and the tensor scheme $\mathcal{PRM}$ of prime POP-graphs, respectively.  Then the equivalence of the two frameworks comes down to the equivalence of $\mathbf{PPG}$ and $\mathcal{POP}$ (Theorem \ref{main}), which follows from their freeness and the equivalence of $\mathbf{PRM}$ and $\mathcal{PRM}$ (by Lemma \ref{basic}). See Fig \ref{sum} for a summary.

\begin{figure}[h]
\centering
\begin{tikzpicture}
\draw  (-4.5,1.5) rectangle (6,-2.5);
\node (v7) at (-1,1.5) {};
\node (v8) at (-1,-2.5) {};
\node (v9) at (2.5,1.5) {};
\node (v10) at (2.5,-2.5) {};
\node (v1) at (-4.5,0.5) {};
\node (v2) at (6,0.5) {};
\node (v3) at (-4.5,-0.5) {};
\node (v4) at (6,-0.5) {};
\node (v5) at (-4.5,-1.5) {};
\node (v6) at (6,-1.5) {};
\draw  (-4.5,0.5) -- (6,0.5);
\draw  (-4.5,-0.5)-- (6,-0.5);
\draw   (-4.5,-1.5)-- (6,-1.5);
\draw  (-1,1.5) -- (-1,-2.5);
\draw  (2.5,1.5)-- (2.5,-2.5);
\node at (-2.8,1) {frameworks};
\node at (-2.8,0) {graphs};
\node at (-2.8,-1) {monoidal categories};
\node at (-2.8,-2) {tensor schemes};
\node at (0.8,1) {topology};
\node at (0.8,0) {PPG-class};
\node at (0.8,-1) {$\mathbf{PPG}$};
\node at (0.8,-2) {$\mathbf{PRM}$};
\node at (4.2,1) {combinatorics};
\node at (4.2,0) {POP-graph};
\node at (4.2,-1) {$\mathcal{POP}$};
\node at (4.2,-2) {$\mathcal{PRM}$};
\end{tikzpicture}
\caption{}
\label{sum}
\end{figure}

Although equivalent, the two frameworks are, in a sense, complementary to each other. In practice, the topological framework, as that of Joyal and Street, is effective and human-readable, which allows people to "see" the process of calculating or proving. While the combinatorial framework is formal and machine-processable, which makes it much easier for us to solve some problems about PPGs by a computer, for example, to enumerate all PPG-classes with fixed number of edges.

In Remark \ref{two}, \ref{three}, \ref{seven}, \ref{four}, \ref{five} and \ref{six}, we demonstrate the equivalence of  Joyal and Street's graphical calculus and the theory of \textbf{upward planar graphs} \cite{[GT95]}, where PPGs and progressive plane graphs (without isolated vertices) are essentially equivalent to upward \textbf{plane $st$ graphs} and upward plane graphs, respectively. The equivalence  sheds some light on the study of upward planarity, especially on how to develop a higher genus theory of upward planarity (or called a \textbf{topological order theory}, which is expected to be a directed version of \textbf{topological graph theory} \cite{[A96]}), where Joyal and Street's graphical calculus for symmetric monoidal categories (Chapter $2$ of \cite{[JS91]}) provides a natural background.  A detailed explanation of this higher genus theory will be given in other place.

As an application, we show a construction of free monoidal categories by POP-graphs. For this purpose, we introduce a category $\mathbf{Semi.Ten}$ of \textbf{semi-tensor schemes} and their morphisms, and define a commutative diagram of adjunctions (see Fig \ref{23}), where $\mathcal{F}^+$ and $\mathcal{F}$, when applying on a semi-tensor scheme, produce a free semi-groupal category and a free monoidal category, respectively; $\mathcal{U}^+$ and $\mathcal{U}$ are defined by the construction of prime POP-diagrams.
In this more general context, the unit convention turns out to be a kind of quotient construction and can be systematically generalized.
Finally, we extend Joyal and Street's construction of a free monoidal category on a tensor scheme into an adjunction, which clarifies the relation of their construction and the unit convention.

\begin{figure}[H]
\centering
\begin{tikzpicture}

\node at (-3.2,-2) {$\mathbf{Semi.Ten}$};
\node at (3.5,-2) {$\mathbf{Mon.Cat}$};
\node at (0,1.5) {$\mathbf{Semi.Gro}$};
\node (v1) at (-2.6,-1.3) {};
\node (v2) at (-0.4,1.3) {};
\node (v3) at (-0.2,1.1) {};
\node (v4) at (-2.4,-1.5) {};
\draw [-latex](v1) -- (v2);
\draw [-latex] (v3) -- (v4);
\node (v5) at (-2.3,-1.9) {};
\node (v6) at (2.6,-1.9) {};
\node (v7) at (2.6,-2.2) {};
\node (v8) at (-2.3,-2.2) {};
\draw [-latex] (v5)-- (v6);
\draw [-latex] (v7) -- (v8);
\node (v12) at (0.3,1) {};
\node (v11) at (2.6,-1.6) {};
\node (v10) at (2.8,-1.4) {};
\node at (0.6,1.2) {};
\node (v9) at (0.5,1.2) {};
\draw [-latex] (v9) -- (v10);
\draw [-latex] (v11)-- (v12);
\node at (-1.7,0.3) {$\mathcal{F}^+$};
\node at (-1,-0.5) {$\mathcal{U}^+$};
\node at (1.1,-0.5) {$\mathfrak{U}$};
\node at (1.8,0.2) {$\mathfrak{F}$};
\node at (0,-1.6) {$\mathcal{F}$};
\node at (0,-2.5) {$\mathcal{U}$};
\end{tikzpicture}
\caption{}
\label{23}
\end{figure}

The paper is organized as follows. In Section $2$, we introduce a topological framework for the graphical calculus.  In Section $3$, we introduce a combinatorial framework for the graphical calculus. In Section $4$, we first reformulate the notion of a processive plane graph, and then show that the two frameworks are equivalent (Theorem \ref{main}). In Section $5$, we study some basic properties of POP-graphs. Section $6$ is devoted to the proof of a key result, Theorem \ref{crux}, which justifies the definition of composition of POP-graphs (Definition \ref{com}). In Section $7$,  we study the decomposition and cancellation properties of the tensor product and composition of POP-graphs. In Section $8$, we prove Theorem \ref{free2}, which shows the freeness of $\mathcal{POP}$. In Section $9$, we introduce the category of semi-tensor schemes,  and using POP-graphs, give a construction of a free monoidal category on a semi-tensor scheme. In Section $10$, we give an algebraic explanation of the unit convention in our general context and show an idea to generalize it. In Section $11$, we extend Joyal and Street's construction of a free monoidal category on a tensor scheme into an adjunction and show that it is naturally compatible with the unit convention.

\section{A topological framework}
In this section, we show a topological framework for the graphical calculus.
We begin by introducing the key notion in this framework.

\begin{defn}\label{progre}
A \textbf{processive plane graph}, or \textbf{PPG}, is a non-empty directed graph drawn in a plane box such that $(1)$ all edges monotonically decrease in the vertical direction; $(2)$ all sinks and sources have degree one and $(3)$ all sources and sinks are on the horizontal boundaries of the plane box.
\end{defn}

Fig \ref{1} shows an example of PPG. The condition $(1)$ is called an \textbf{upward property}. A planar drawing of directed graph is called \textbf{upward} if all its edges monotonically decrease in the vertical direction (or other fixed direction). Clearly, a necessary condition for a directed graph to have an upward planar drawing is that it is acyclic. So a PPG is acyclic, and therefore has at least one source and at least one sink.

\begin{figure}[h]
\centering
\begin{tikzpicture}[scale=0.32]
\node (v2) at (-4,3) {};
\draw[fill] (-1.5,5.5) circle [radius=0.15];
\node (v1) at (-1.5,5.5) {};
\node (v7) at (-1.5,1) {};
\node (v9) at (1.5,5.5) {};
\node (v14) at (2,1.5) {};
\node (v3) at (-3,7.5) {};
\node (v4) at (-2,7.5) {};
\node (v5) at (-0.5,7.5) {};
\node (v6) at (-4.8,7.5) {};
\node (v11) at (-4.5,-1) {};
\node (v12) at (-2,-1) {};
\node (v13) at (0,-1) {};
\node (v15) at (2,-1) {};
\node (v8) at (1,7.5) {};
\node (v10) at (2.5,7.5) {};
\node  at (-2.5,3.5) {};
\node  at (-3,5.2) {};
\node  at (-1.2,3.3) {};
\node  at (0.5,3.25) {};
\node  at (2.2,3.7) {};
\node  at (-3,1.7) {};
\draw[fill] (-4,3) circle [radius=0.15];
\draw[fill] (v1) circle [radius=0.15];
\draw[fill] (v7) circle [radius=0.15];
\draw[fill] (v9) circle [radius=0.15];
\draw[fill] (v14) circle [radius=0.15];
\draw[fill] (v1) circle [radius=0.15];
\draw[fill] (v2) circle [radius=0.15];
\draw[fill] (v3) circle [radius=0.15];
\draw[fill] (v4) circle [radius=0.15];
\draw[fill] (v5) circle [radius=0.15];
\draw[fill] (v6) circle [radius=0.15];
\draw[fill] (v8) circle [radius=0.15];
\draw[fill] (v10) circle [radius=0.15];
\draw[fill] (v11) circle [radius=0.15];
\draw[fill] (v12) circle [radius=0.15];
\draw[fill] (v13) circle [radius=0.15];
\draw[fill] (v15) circle [radius=0.15];

\draw  plot[smooth, tension=1] coordinates {(v1) (-2.5,5)  (-3.5,4) (v2)}[postaction={decorate, decoration={markings,mark=at position .5 with {\arrow[black]{stealth}}}}];
\draw  plot[smooth, tension=1] coordinates {(v1) (-2,4.5)  (-3,3.5) (v2)}[postaction={decorate, decoration={markings,mark=at position .5 with {\arrow[black]{stealth}}}}];

\draw  (-3,7.5) -- (-1.5,5.5)[postaction={decorate, decoration={markings,mark=at position .5 with {\arrow[black]{stealth}}}}];
\draw  (-2,7.5) -- (-1.5,5.5)[postaction={decorate, decoration={markings,mark=at position .5 with {\arrow[black]{stealth}}}}];
\draw  (-0.5,7.5) -- (-1.5,5.5)[postaction={decorate, decoration={markings,mark=at position .5 with {\arrow[black]{stealth}}}}];

\draw  (-4.8,7.5)-- (-4,3)[postaction={decorate, decoration={markings,mark=at position .5 with {\arrow[black]{stealth}}}}];
\draw  (-1.5,5.5)  -- (-1.5,1)[postaction={decorate, decoration={markings,mark=at position .5 with {\arrow[black]{stealth}}}}];
\draw  (-4,3) -- (-1.5,1)[postaction={decorate, decoration={markings,mark=at position .5 with {\arrow[black]{stealth}}}}];

\draw (1,7.5)--(1.5,5.5)[postaction={decorate, decoration={markings,mark=at position .5 with {\arrow[black]{stealth}}}}];
\draw  (2.5,7.5) -- (1.5,5.5)[postaction={decorate, decoration={markings,mark=at position .5 with {\arrow[black]{stealth}}}}];
\draw  (1.5,5.5) -- (-1.5,1)[postaction={decorate, decoration={markings,mark=at position .5 with {\arrow[black]{stealth}}}}];
\draw  (-4,3) -- (-4.5,-1)[postaction={decorate, decoration={markings,mark=at position .5 with {\arrow[black]{stealth}}}}];
\draw  (-1.5,1) -- (-2,-1)[postaction={decorate, decoration={markings,mark=at position .65 with {\arrow[black]{stealth}}}}];
\draw  (0,-1) -- (-1.5,1)[postaction={decorate, decoration={markings,mark=at position .5 with {\arrowreversed[black]{stealth}}}}];
\draw  (1.5,5.5) -- (2,1.5)[postaction={decorate, decoration={markings,mark=at position .5 with {\arrow[black]{stealth}}}}];
\draw  (2,1.5) -- (2,-1)[postaction={decorate, decoration={markings,mark=at position .5 with {\arrow[black]{stealth}}}}];

\node (v17) at (6.5,7.5) {};
\node (v16) at (6.5,-1) {};
\draw[fill] (v16) circle [radius=0.15];
\draw[fill] (v17) circle [radius=0.15];
\draw  (6.5,-1) -- (6.5,7.5)[postaction={decorate, decoration={markings,mark=at position .5 with {\arrowreversed[black]{stealth}}}}];
\node (v18) at (4.5,7.5) {};
\node (v19) at (4.5,-1) {};
\node (v20) at (4.5,3.25) {};
\draw[fill] (v18) circle [radius=0.15];
\draw[fill] (v19) circle [radius=0.15];

\draw[fill] (v20) circle [radius=0.15];
\draw  (4.5,-1) -- (4.5,3.25)[postaction={decorate, decoration={markings,mark=at position .5 with {\arrowreversed[black]{stealth}}}}];
\draw  (4.5,3.24) -- (4.5,7.5)[postaction={decorate, decoration={markings,mark=at position .5 with {\arrowreversed[black]{stealth}}}}];

\draw [loosely dashed] (-6.5,7.5)--(-6.5,-1);
\draw [loosely dashed] (8,7.5)--(8,-1);
\draw [loosely dashed] (-6.5,7.5)--(8,7.5);
\draw [loosely dashed] (-6.5,-1)--(8,-1);

\end{tikzpicture}
\caption{A processive plane graph}
\label{1}
\end{figure}
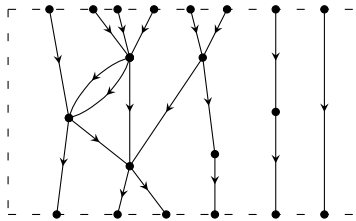

In this paper, we adopt the convention that an \textbf{isolated vertex} (with degree zero) is both an source (with indegree zero) and an sink (with outdegree zero). So the condition $(2)$ implies that a PPG has no isolated vertex.

Since a PPG is upward, then the condition $(3)$ can be replaced by the \textbf{boxed condition} that all source are drawn on one horizontal boundary of the plane box and all sinks are drawn on the other horizontal boundary of the plane box.
\begin{rem}
Strictly, a graph is a pure combinatorial object defined by a vertex set, an edge set and an incident relation, which does not refer to any geometric and topological notions, such as line segment, planar drawing, etc. However, we think that it is helpful to use some geometric languages, and we will not make a distinction between a graph and its \textbf{geometric representation} (or \textbf{drawing}). A plane graph is a planar drawing of a graph, or in other words, a geometric representation of a graph in the plane.
\end{rem}

\begin{rem}\label{two}
Note that Definition \ref{progre} is a restriction of that of a progressive plane graph introduced by Joyal and Street (Definition $1.1$ in \cite{[JS91]}, see also Definition $13$ in Chapter $2$ of \cite{[W13]}), which is an upward planar drawing of a non-empty directed graph (possibly with isolated vertices) in a plane box such that all vertices drawn on one horizontal boundary of the plane box are of degree one. It is easy to see that any progressive plane graph can be extended (in a non-unique way) into a PPG, see Fig \ref{25} for an example
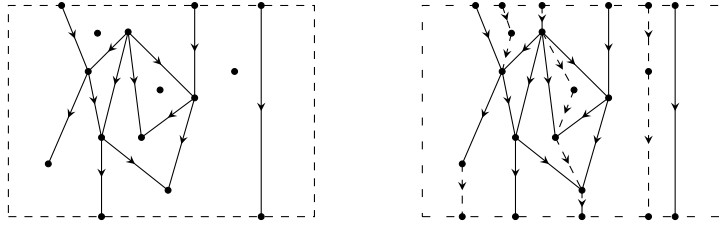
\begin{figure}[htbp]
\centering
$$
\begin{matrix}\begin{matrix}
\begin{tikzpicture}[scale=0.35]

\draw [dashed] (-3,1.5) rectangle (8.5,-6.5);
\node [above](v1) at (-1,1.5) {};
\node (v2) at (0,-1) {};
\node (v6) at (-1.5,-4.5) {};
\node [left] at (-1.5,-4.5) {};
\node (v3) at (1.5,0.5) {};
\node (v8) at (4,-2) {};
\node (v4) at (0.5,-3.5) {};
\node [below](v5) at (0.5,-6.5) {};
\node [above](v7) at (4,1.5) {};
\draw  (-1,1.5)  -- (0,-1)[postaction={decorate, decoration={markings,mark=at position .5 with {\arrow[black]{stealth}}}}];
\draw   (1.5,0.5) -- (0,-1)[postaction={decorate, decoration={markings,mark=at position .5 with {\arrow[black]{stealth}}}}];
\draw   (1.5,0.5)--  (0.5,-3.5)[postaction={decorate, decoration={markings,mark=at position .5 with {\arrow[black]{stealth}}}}];
\draw  (0.5,-3.5) -- (v5)[postaction={decorate, decoration={markings,mark=at position .5 with {\arrow[black]{stealth}}}}];
\draw  (0,-1) -- (-1.5,-4.5)[postaction={decorate, decoration={markings,mark=at position .5 with {\arrow[black]{stealth}}}}];
\draw  (0,-1) -- (0.5,-3.5)[postaction={decorate, decoration={markings,mark=at position .5 with {\arrow[black]{stealth}}}}];
\draw  (4,1.5)-- (4,-2)[postaction={decorate, decoration={markings,mark=at position .5 with {\arrow[black]{stealth}}}}];
\draw   (1.5,0.5)-- (4,-2)[postaction={decorate, decoration={markings,mark=at position .5 with {\arrow[black]{stealth}}}}];
\node (v9) at (3,-5.5) {};
\draw  (0.5,-3.5) -- (3,-5.5)[postaction={decorate, decoration={markings,mark=at position .5 with {\arrow[black]{stealth}}}}];
\draw  (4,-2) -- (3,-5.5)[postaction={decorate, decoration={markings,mark=at position .5 with {\arrow[black]{stealth}}}}];
\node (v10) at (2,-3.5) {};
\draw   (1.5,0.5) -- (2,-3.5)[postaction={decorate, decoration={markings,mark=at position .5 with {\arrow[black]{stealth}}}}];
\draw  (4,-2) -- (2,-3.5)[postaction={decorate, decoration={markings,mark=at position .5 with {\arrow[black]{stealth}}}}];

\draw [fill](v2) circle [radius=0.11];
\draw [fill](v3) circle [radius=0.11];
\draw [fill](v4) circle [radius=0.11];
\draw [fill](v6) circle [radius=0.11];
\draw [fill](v8) circle [radius=0.11];
\draw [fill](v9) circle [radius=0.11];
\draw [fill](v10) circle [radius=0.11];

\node at (-1,0.5) {};
\node at (4.5,0) {};
\node at (0,-5.5) {};
\node at (-1.5,-3) {};

\draw(6.5,1.5)--(6.5,-6.5)[postaction={decorate, decoration={markings,mark=at position .5 with {\arrow[black]{stealth}}}}];
\node [above]at(6.5,1.5) {};
\node [below]at (6.5,-6.5) {};
\node at (7,-3) {};

\draw [fill] (-1,1.5) circle [radius=0.11];
\draw [fill](0.5,-6.5) circle [radius=0.11];
\draw [fill](4,1.5) circle [radius=0.11];
\draw [fill](6.5,-6.5) circle [radius=0.11];
\draw [fill](6.5,1.5) circle [radius=0.11];

\draw [fill](5.5,-1) circle [radius=0.11];
\draw [fill](2.7,-1.7) circle [radius=0.11];

\draw [fill](0.35,0.45) circle [radius=0.11];
\end{tikzpicture}
\end{matrix}&&&&\begin{matrix}
  \begin{tikzpicture}[scale=0.35]

\draw [loosely dashed] (-3,1.5) rectangle (8.5,-6.5);
\node [above](v1) at (-1,1.5) {};
\node (v2) at (0,-1) {};
\node (v6) at (-1.5,-4.5) {};
\node [left] at (-1.5,-4.5) {};
\node (v3) at (1.5,0.5) {};
\node (v8) at (4,-2) {};
\node (v4) at (0.5,-3.5) {};
\node [below](v5) at (0.5,-6.5) {};
\node [above](v7) at (4,1.5) {};
\draw  (-1,1.5)  -- (0,-1)[postaction={decorate, decoration={markings,mark=at position .5 with {\arrow[black]{stealth}}}}];
\draw   (1.5,0.5) -- (0,-1)[postaction={decorate, decoration={markings,mark=at position .5 with {\arrow[black]{stealth}}}}];
\draw   (1.5,0.5)--  (0.5,-3.5)[postaction={decorate, decoration={markings,mark=at position .5 with {\arrow[black]{stealth}}}}];
\draw  (0.5,-3.5) -- (v5)[postaction={decorate, decoration={markings,mark=at position .5 with {\arrow[black]{stealth}}}}];
\draw  (0,-1) -- (-1.5,-4.5) node (v19) {}[postaction={decorate, decoration={markings,mark=at position .5 with {\arrow[black]{stealth}}}}];
\draw  (0,-1) node (v16) {} -- (0.5,-3.5)[postaction={decorate, decoration={markings,mark=at position .5 with {\arrow[black]{stealth}}}}];
\draw  (4,1.5)-- (4,-2)[postaction={decorate, decoration={markings,mark=at position .5 with {\arrow[black]{stealth}}}}];
\draw   (1.5,0.5)-- (4,-2)[postaction={decorate, decoration={markings,mark=at position .5 with {\arrow[black]{stealth}}}}];
\node (v9) at (3,-5.5) {};
\draw  (0.5,-3.5) -- (3,-5.5)[postaction={decorate, decoration={markings,mark=at position .5 with {\arrow[black]{stealth}}}}];
\draw  (4,-2) -- (3,-5.5) node (v22) {}[postaction={decorate, decoration={markings,mark=at position .5 with {\arrow[black]{stealth}}}}];
\node (v10) at (2,-3.5) {};
\draw   (1.5,0.5) node (v18) {} -- (2,-3.5)[postaction={decorate, decoration={markings,mark=at position .5 with {\arrow[black]{stealth}}}}];
\draw  (4,-2) -- (2,-3.5) node (v21) {}[postaction={decorate, decoration={markings,mark=at position .5 with {\arrow[black]{stealth}}}}];

\draw [fill](v2) circle [radius=0.11];
\draw [fill](v3) circle [radius=0.11];
\draw [fill](v4) circle [radius=0.11];
\draw [fill](v6) circle [radius=0.11];
\draw [fill](v8) circle [radius=0.11];
\draw [fill](v9) circle [radius=0.11];
\draw [fill](v10) circle [radius=0.11];

\node at (-1,0.5) {};
\node at (4.5,0) {};
\node at (0,-5.5) {};
\node at (-1.5,-3) {};

\draw(6.5,1.5)--(6.5,-6.5)[postaction={decorate, decoration={markings,mark=at position .5 with {\arrow[black]{stealth}}}}];
\node [above]at(6.5,1.5) {};
\node [below]at (6.5,-6.5) {};
\node at (7,-3) {};

\draw [fill] (-1,1.5) circle [radius=0.11];
\draw [fill](0.5,-6.5) circle [radius=0.11];
\draw [fill](4,1.5) circle [radius=0.11];
\draw [fill](6.5,-6.5) circle [radius=0.11];
\draw [fill](6.5,1.5) circle [radius=0.11];

\draw [fill](5.5,-1) circle [radius=0.11];
\draw [fill](2.7,-1.7) node (v24) {} circle [radius=0.11];

\draw [fill](0.35,0.45) node (v15) {} circle [radius=0.11];

\node (v11) at (5.5,1.5) {};
\draw [fill](5.5,1.5) circle [radius=0.11];
\node (v12) at (5.5,-1) {};

\node (v13) at (5.5,-6.5) {};
\draw [fill](5.5,-6.5) circle [radius=0.11];
\draw [dashed] (5.5,1.5) --(5.5,-1)[postaction={decorate, decoration={markings,mark=at position .5 with {\arrow[black]{stealth}}}}];
\draw  [dashed](5.5,-1) -- (5.5,-6.5)[postaction={decorate, decoration={markings,mark=at position .5 with {\arrow[black]{stealth}}}}];
\node (v14) at (0,1.5) {};
\draw [fill](v14) circle [radius=0.11];
\draw  [dashed](0,1.5)-- (0.35,0.45)[postaction={decorate, decoration={markings,mark=at position .55 with {\arrow[black]{stealth}}}}];
\draw  [dashed](0.35,0.45)-- (0,-1)[postaction={decorate, decoration={markings,mark=at position .65 with {\arrow[black]{stealth}}}}];
\node (v17) at (1.5,1.5) {};
\draw [fill](v17) circle [radius=0.11];

\draw[dashed]  (1.5,1.5)-- (1.5,0.5)[postaction={decorate, decoration={markings,mark=at position .65 with {\arrow[black]{stealth}}}}];
\node (v20) at (-1.5,-6.5) {};
\draw [fill](v20) circle [radius=0.11];
\draw [dashed] (-1.5,-4.5) -- (-1.5,-6.5)[postaction={decorate, decoration={markings,mark=at position .5 with {\arrow[black]{stealth}}}}];
\draw [dashed] (2,-3.5)-- (3,-5.5)[postaction={decorate, decoration={markings,mark=at position .5 with {\arrow[black]{stealth}}}}];
\node (v23) at (3,-6.5) {};
\draw [fill](v23) circle [radius=0.11];
\draw  [dashed](3,-5.5) -- (3,-6.5)[postaction={decorate, decoration={markings,mark=at position .65 with {\arrow[black]{stealth}}}}];
\draw [dashed] (2.7,-1.7) -- (2,-3.5)[postaction={decorate, decoration={markings,mark=at position .5 with {\arrow[black]{stealth}}}}];
\draw [dashed] (1.5,0.5) -- (2.7,-1.7)[postaction={decorate, decoration={markings,mark=at position .65 with {\arrow[black]{stealth}}}}];
\end{tikzpicture}
\end{matrix}
\end{matrix}
$$
\caption{A progressive plane graph and one of its PPG-extentions}
\label{25}
\end{figure}
\end{rem}

Following Joyal and Street \cite{[JS91]}, tensor product and composition of PPGs are defined as follows.
Fix the plane box to be $[-1,1]\times [-1,1]\subset \mathbb{R}^2$ and write $G:m\rightarrow n$ if $G$ has $m$ sources and $n$ sinks.
Define the functions $\gamma,\tau:\mathbb{R}^2\rightarrow \mathbb{R}^2$ as $$\gamma(x,t)=(x,\frac{1}{3}t),\ \ \ \tau(x,t)=(\frac{1}{2}x,t)$$
and the points $e_1=(1,0),\ e_2=(0,1)\in \mathbb{R}^2.$ Notation such as $\gamma(S+e_2),$ for $S\subset \mathbb{R}^2,$ denotes the set $$\left\{(x,\frac{1}{3}(t+1))\in \mathbb{R}^2\   \middle \vert\   (x,t)\in S\right\}.$$

Let $G_1$, $G_2$ be two PPGs. Their \textbf{tensor product} $G_1\otimes G_2$ is the PPG consisting of the space $$\tau\Big ((G_1-e_1)\sqcup(G_2+e_1)\Big)$$ with  $\tau\Big((V(G_1)-e_1)\sqcup(V(G_2)+e_1)\Big)$ as the set of vertices, where $V(G)$ denotes the vertex set of graph $G$. Ignoring translations, we depict this as Fig \ref{5}.

\begin{figure}[htbp]
\centering
$$\begin{matrix}
\begin{matrix}
\begin{tikzpicture}
\node (v1) at (-1,1) {};
\node (v4) at (-1,-1) {};
\node (v2) at (1,1) {};
\node (v3) at (1,-1) {};
\draw[dashed]  (-1,1)--(1,1);
\draw  [dashed](1,1) -- (1,-1);
\draw  [dashed](-1,1) -- (-1,-1);
\draw  [dashed](-1,-1) -- (1,-1);
\node at (0,0) {$G_1$};
\end{tikzpicture}\end{matrix}&\otimes&\begin{matrix}\begin{tikzpicture}
\node (v1) at (-1,1) {};
\node (v4) at (-1,-1) {};
\node (v2) at (1,1) {};
\node (v3) at (1,-1) {};
\draw[dashed]  (-1,1)--(1,1);
\draw[dashed]  (1,1) -- (1,-1);
\draw[dashed]  (-1,1) -- (-1,-1);
\draw[dashed]  (-1,-1) -- (1,-1);
\node at (0,0) {$G_2$};
\end{tikzpicture}\end{matrix}&=&\begin{matrix}\begin{tikzpicture}
\node (v1) at (-1,1) {};
\node (v4) at (-1,-1) {};
\node (v2) at (1,1) {};
\node (v3) at (1,-1) {};
\draw [dashed] (-1,1)--(1,1);
\draw[dashed]  (1,1) -- (1,-1);
\draw [dashed] (-1,1) -- (-1,-1);
\draw [dashed] (-1,-1) -- (1,-1);

\node (v5) at (0,1) {};
\node (v6) at (0,-1) {};
\draw [dashed] (0,1) edge (0,-1);
\node at (-0.5,0) {$\tau G_1$};
\node at (0.5,0) {$\tau G_2$};
\end{tikzpicture}
\end{matrix}
 \end{matrix}
 $$
\caption{}
\label{5}
\end{figure}

Suppose $G_1:l\rightarrow m$, $G_2:m\rightarrow n$ are PPGs. Let $a_1<\cdots<a_m$ be the sinks of $G_1-2e_2$, and let $b_1<\cdots<b_m$ be the sources of $G_2+2e_2$. The \textbf{composition} $G_2\circ G_1:l\rightarrow n$ is the PPG consisting of the space $$G_2\circ G_1=\gamma\Big((G^1-2e_2)\sqcup[a_1,b_1]\sqcup\cdots\sqcup[a_m,b_m]\sqcup(G^2+2e_2)\Big)$$
with $\gamma\Big(V(G_1-2e_2)-\{a_1,\cdots,a_m\})\sqcup\gamma((G_2+2e_2)-\{b_1,\cdots,b_n\}\Big)$ as the set of vertices, where $[a,b]\subset\mathbb{R}^2$ is the segment between the points $a$ and $b$. We depict this as Fig \ref{6}.

\begin{figure}[htbp]
\centering
$$\begin{matrix}\begin{matrix}\begin{tikzpicture}
\node (v1) at (-1,1) {};
\node (v4) at (-1,-1) {};
\node (v2) at (1,1) {};
\node (v3) at (1,-1) {};
\draw[dashed]  (-1,1)--(1,1);
\draw [dashed] (1,1) -- (1,-1);
\draw [dashed] (-1,1) -- (-1,-1);
\draw[dashed]  (-1,-1) -- (1,-1);
\node at (0,0) {$G_2$};
\end{tikzpicture}\end{matrix}&\circ&\begin{matrix}\begin{tikzpicture}
\node (v1) at (-1,1) {};
\node (v4) at (-1,-1) {};
\node (v2) at (1,1) {};
\node (v3) at (1,-1) {};
\draw[dashed]  (-1,1)--(1,1);
\draw [dashed] (1,1) -- (1,-1);
\draw [dashed] (-1,1) -- (-1,-1);
\draw [dashed] (-1,-1) -- (1,-1);
\node at (0,0) {$G_1$};
\end{tikzpicture}\end{matrix}&=&\begin{matrix}\begin{tikzpicture}
\node (v1) at (-1,1) {};
\node (v4) at (-1,-1) {};
\node (v2) at (1,1) {};
\node (v3) at (1,-1) {};
\draw[dashed]  (-1,1)--(1,1);
\draw [dashed] (1,1) -- (1,-1);
\draw [dashed] (-1,1) -- (-1,-1);
\draw [dashed] (-1,-1) -- (1,-1);

\node (v5) at (-1,0.5) {};
\node (v6) at (1,0.5) {};
\node (v7) at (-1,-0.5) {};
\node (v8) at (1,-0.5) {};
\draw [dashed] (-1,0.5) --(1,0.5);
\draw[dashed]  (-1,-0.5) --(1,-0.5);
\node at (0,0.75) {$\gamma G_1$};
\node at (0,-0.75) {$\gamma G_2$};
\node (v9) at (-0.5,0.5) {};
\node (v10) at (-0.5,-0.5) {};
\node (v11) at (0.5,0.5) {};
\node (v12) at (0,-0.5) {};
\node [scale=0.7] at (-0.5,0.3) {$\gamma a_1$};
\node [scale=0.7]at (-0.4,-0.3) {$\gamma b_1$};
\node [scale=0.7]at (0.25,0.3) {$\gamma a_m$};
\node [scale=0.7]at (0.45,-0.3) {$\gamma b_m$};
\draw  (-0.7,0.5) -- (-0.6,-0.5);
\draw  (0.5,0.5) -- (0.7,-0.5);
\node at (0,0) {$\cdots$};
\end{tikzpicture}\end{matrix} \end{matrix}$$
\caption{}
\label{6}
\end{figure}

\begin{defn}\label{eq1}
We say two PPGs are \textbf{equivalent} if they are connected by a planar isotopy.
\end{defn}
We will justify this definition in Section $4$. Such a planar isotopy in Definition \ref{eq1} is called a \textbf{deformation} of PPGs. We will call an equivalence/deformation class of PPGs shortly a \textbf{PPG-class}.

The tensor product and composition satisfy the middle-four-interchange law $(G\circ H)\otimes(G'\circ H')=(G\otimes G')\circ(H\otimes H').$
They are associative on deformation classes.

There are some special PPGs.
A PPG is called \textbf{elementary} if each of its connected components has at most one vertex which is neither a source nor a sink.

\begin{prop}\label{X}
Any PPG is equivalent to a composition of elementary ones.
\end{prop}
The PPG in Fig \ref{1} is equivalent to a composition of the three elementary PPGs in Fig \ref{7}, where for convenient we will freely omit the plane box, sources and sinks.
\begin{figure}[h]
\centering
$$
\begin{matrix}
\begin{matrix}
\begin{tikzpicture}[scale=0.5]
\node (v2) at (-4,3) {};
\node (v1) at (-1.5,5.5) {};
\node (v7) at (-1.5,1) {};
\node (v9) at (1.5,5.5) {};
\node (v14) at (2,1.5) {};
\node (v3) at (-3,7.5) {};
\node (v4) at (-2,7.5) {};
\node (v5) at (-0.5,7.5) {};
\node (v6) at (-4.8,7.4) {};
\node (v11) at (-4.5,-1) {};
\node (v12) at (-2,-1) {};
\node (v13) at (0,-1) {};
\node (v15) at (2,-1) {};
\node (v8) at (1,7.5) {};
\node (v10) at (2.5,7.5) {};
\node  at (-2.5,3.5) {};
\node  at (-3,5.2) {};
\node  at (-1.2,3.3) {};
\node  at (0.5,3.25) {};
\node  at (2.2,3.7) {};
\node  at (-3,1.7) {};
\draw[fill] (-4,3) circle [radius=0.11];
\draw[fill] (v1) circle [radius=0.11];
\draw[fill] (v7) circle [radius=0.11];
\draw[fill] (v9) circle [radius=0.11];
\draw[fill] (v14) circle [radius=0.11];
\draw  plot[smooth, tension=1] coordinates {(v1) (-2.5,5)  (-3.5,4) (v2)}[postaction={decorate, decoration={markings,mark=at position .5 with {\arrow[black]{stealth}}}}];
\draw  plot[smooth, tension=1] coordinates {(v1) (-2,4.5)  (-3,3.5) (v2)}[postaction={decorate, decoration={markings,mark=at position .5 with {\arrow[black]{stealth}}}}];

\draw  (v3) -- (-1.5,5.5)[postaction={decorate, decoration={markings,mark=at position .5 with {\arrow[black]{stealth}}}}];
\draw  (v4) -- (-1.5,5.5)[postaction={decorate, decoration={markings,mark=at position .5 with {\arrow[black]{stealth}}}}];
\draw  (v5) -- (-1.5,5.5)[postaction={decorate, decoration={markings,mark=at position .5 with {\arrow[black]{stealth}}}}];

\draw  (v6) -- (-4,3)[postaction={decorate, decoration={markings,mark=at position .5 with {\arrow[black]{stealth}}}}];
\draw  (-1.5,5.5) -- (-1.5,1)[postaction={decorate, decoration={markings,mark=at position .5 with {\arrow[black]{stealth}}}}];
\draw  (-4,3) -- (-1.5,1)[postaction={decorate, decoration={markings,mark=at position .5 with {\arrow[black]{stealth}}}}];

\draw  (v8)--(1.5,5.5)[postaction={decorate, decoration={markings,mark=at position .5 with {\arrow[black]{stealth}}}}];
\draw  (v10) -- (1.5,5.5)[postaction={decorate, decoration={markings,mark=at position .5 with {\arrow[black]{stealth}}}}];
\draw  (1.5,5.5) -- (-1.5,1)[postaction={decorate, decoration={markings,mark=at position .5 with {\arrow[black]{stealth}}}}];
\draw  (-4,3) -- (v11)[postaction={decorate, decoration={markings,mark=at position .5 with {\arrow[black]{stealth}}}}];
\draw  (-1.5,1) -- (v12)[postaction={decorate, decoration={markings,mark=at position .65 with {\arrow[black]{stealth}}}}];
\draw  (v13) -- (-1.5,1)[postaction={decorate, decoration={markings,mark=at position .5 with {\arrowreversed[black]{stealth}}}}];
\draw  (1.5,5.5) -- (2,1.5)[postaction={decorate, decoration={markings,mark=at position .5 with {\arrow[black]{stealth}}}}];
\draw  (2,1.5) -- (v15)[postaction={decorate, decoration={markings,mark=at position .5 with {\arrow[black]{stealth}}}}];

\node (v16) at (-6,4.3) {};
\node (v17) at (8,4.3) {};
\node (v18) at (-6,2.4) {};
\node (v19) at (8,2.4) {};
\draw [dotted]  (v16) edge (v17);
\draw [dotted]  (v18) edge (v19);

%%%%%%%%%%%%%%%%%%%%

\node (v20) at (4.5,3.25) {};

\node (v21) at (4.5,7.5) {};

\node (v22) at (4.5,-1) {};
\draw[fill] (v20) circle [radius=0.11];
\draw  (v22) -- (4.5,3.25)[postaction={decorate, decoration={markings,mark=at position .5 with {\arrowreversed[black]{stealth}}}}];
\draw  (4.5,3.24) -- (v21)[postaction={decorate, decoration={markings,mark=at position .5 with {\arrowreversed[black]{stealth}}}}];

\node (v23) at (6.5,7.5) {};
\node (v24) at (6.5,-1) {};
\draw  (v23) -- (v24)[postaction={decorate, decoration={markings,mark=at position .5 with {\arrow[black]{stealth}}}}];
\end{tikzpicture}
\end{matrix}
&
\begin{matrix}
\begin{tikzpicture}[scale=0.5]
\node (v1) at (2.5,0) {};
\node (v2) at (-1.5,0) {};
\draw [->] (2.5,0) --(-1.5,0);
\node at (0.5,0.5) {composition};
\end{tikzpicture}
\end{matrix}
&
\begin{matrix}
\begin{matrix}
\begin{tikzpicture}[scale=0.5]
\node (v2) at (-2,0.5) {};
\node (v5) at (0.5,0.5) {};
\draw[fill] (0.5,0.5) circle [radius=0.11];
\node (v12) at (3,0.5) {};
\draw[fill] (3,0.5) circle [radius=0.11];
\node [scale=0.7](v1) at (-2,2) {};
\node (v3) at (-2,-1) {};
\node [scale=0.7] (v4) at (-0.5,2) {};
\node [scale=0.7](v6) at (0.5,2) {};
\node [scale=0.7](v7) at (1.5,2) {};
\node [scale=0.7](v8) at (-0.5,-1) {};
\node [scale=0.7](v9) at (0.5,-1) {};
\node [scale=0.7] (v10) at (1.5,-1) {};
\node [scale=0.7](v11) at (2.5,2) {};
\node [scale=0.7](v13) at (3.5,2) {};
\node [scale=0.7](v14) at (2.5,-1) {};
\node [scale=0.7] (v15) at (3.5,-1) {};
\draw  (v1) -- (v3)[postaction={decorate, decoration={markings,mark=at position .45 with {\arrow[black]{stealth}}}}];
\draw  (v4)  --  (0.5,0.5)[postaction={decorate, decoration={markings,mark=at position .45 with {\arrow[black]{stealth}}}}];
\draw  (v6)  --  (0.5,0.5)[postaction={decorate, decoration={markings,mark=at position .45 with {\arrow[black]{stealth}}}}];
\draw  (v7)  --  (0.5,0.5)[postaction={decorate, decoration={markings,mark=at position .45 with {\arrow[black]{stealth}}}}];
\draw  (0.5,0.5)  --  (v8)[postaction={decorate, decoration={markings,mark=at position .75 with {\arrow[black]{stealth}}}}];
\draw  (0.5,0.5)  --  (v9)[postaction={decorate, decoration={markings,mark=at position .75 with {\arrow[black]{stealth}}}}];
\draw  (0.5,0.5)  --  (v10)[postaction={decorate, decoration={markings,mark=at position .75 with {\arrow[black]{stealth}}}}];
\draw  (v11)  --  (3,0.5)[postaction={decorate, decoration={markings,mark=at position .45 with {\arrow[black]{stealth}}}}];
\draw  (v13)  --  (3,0.5)[postaction={decorate, decoration={markings,mark=at position .45 with {\arrow[black]{stealth}}}}];
\draw  (3,0.5)  --  (v14)[postaction={decorate, decoration={markings,mark=at position .75 with {\arrow[black]{stealth}}}}];
\draw  (3,0.5)  --  (v15)[postaction={decorate, decoration={markings,mark=at position .75 with {\arrow[black]{stealth}}}}];

\node [scale=0.7](v16) at (4.5,2) {};
\node [scale=0.7](v17) at (5.5,2) {};
\node [scale=0.7](v18) at (4.5,-1) {};
\node [scale=0.7] (v19) at (5.5,-1) {};
\draw  (v16)  --  (v18)[postaction={decorate, decoration={markings,mark=at position .45 with {\arrow[black]{stealth}}}}];
\draw  (v17)  --  (v19)[postaction={decorate, decoration={markings,mark=at position .45 with {\arrow[black]{stealth}}}}];
\end{tikzpicture}

\end{matrix}\\
\begin{matrix}
\begin{tikzpicture}[scale=0.5]
\node (v17) at (-0.5,-3.5) {};
\draw[fill] (-0.5,-3.5) circle [radius=0.11];
\node [scale=0.7](v16) at (-2,-2) {};
\node [scale=0.7](v18) at (-0.5,-2) {};
\node [scale=0.7](v19) at (0.5,-2) {};
\node (v23) at (1.5,-3.5) {};
\node (v26) at (2.5,-3.5) {};
\node (v29) at (3.5,-3.5) {};
\node [scale=0.7](v20) at (-1.5,-5) {};
\node [scale=0.7](v21) at (0.5,-5) {};
\node [scale=0.7](v22) at (1.5,-2) {};
\node (v24) at (1.5,-5) {};
\node [scale=0.7](v25) at (2.5,-2) {};
\node (v27) at (2.5,-5) {};
\node [scale=0.7](v28) at (3.5,-2) {};
\node (v30) at (3.5,-5) {};

\draw  (v16) -- (-0.5,-3.5)[postaction={decorate, decoration={markings,mark=at position .45 with {\arrow[black]{stealth}}}}];
\draw  (v18) -- (-0.5,-3.5)[postaction={decorate, decoration={markings,mark=at position .45 with {\arrow[black]{stealth}}}}];
\draw  (v19) -- (-0.5,-3.5)[postaction={decorate, decoration={markings,mark=at position .45 with {\arrow[black]{stealth}}}}];
\draw  (-0.5,-3.5) -- (v20)[postaction={decorate, decoration={markings,mark=at position .75 with {\arrow[black]{stealth}}}}];
\draw  (-0.5,-3.5) -- (v21)[postaction={decorate, decoration={markings,mark=at position .75 with {\arrow[black]{stealth}}}}];
\draw  (v22) -- (v24)[postaction={decorate, decoration={markings,mark=at position .45 with {\arrow[black]{stealth}}}}];
\draw  (v25) -- (v27)[postaction={decorate, decoration={markings,mark=at position .45 with {\arrow[black]{stealth}}}}];
\draw  (v28) -- (v30)[postaction={decorate, decoration={markings,mark=at position .45 with {\arrow[black]{stealth}}}}];

\node [scale=0.7](v66) at (4.5,-2) {};
\node [scale=0.7](v67) at (5.5,-2) {};
\node [scale=0.7](v68) at (4.5,-5) {};
\node [scale=0.7] (v69) at (5.5,-5) {};
\draw  (v66)  --  (v68)[postaction={decorate, decoration={markings,mark=at position .2 with {\arrow[black]{stealth}}}}][postaction={decorate, decoration={markings,mark=at position .9 with {\arrow[black]{stealth}}}}];
\draw  (v67)  --  (v69)[postaction={decorate, decoration={markings,mark=at position .45 with {\arrow[black]{stealth}}}}];
\draw[fill] (4.5,-3.5) circle [radius=0.11];
\end{tikzpicture}
\end{matrix}\\
\begin{matrix}
\begin{tikzpicture}[scale=0.5]

\node (v32) at (-1.5,-7.5) {};

\node [scale=0.7](v31) at (-1.5,-6) {};
\node (v33) at (-1.5,-9) {};
\node (v35) at (1.5,-7.5) {};
\draw [fill] (1.5,-7.5) circle [radius=0.11];
\node [scale=0.7] (v34) at (0.5,-6) {};
\node [scale=0.7](v36) at (1.5,-6) {};
\node [scale=0.7](v37) at (2.5,-6) {};
\node [scale=0.7](v38) at (1,-9) {};
\node [scale=0.7](v39) at (2,-9) {};
\node (v41) at (3.5,-7.5) {};
\draw [fill](3.5,-7.5) circle [radius=0.11];
\node [scale=0.7](v40) at (3.5,-6) {};
\node [scale=0.7](v42) at (3.5,-9) {};

\draw  (v31) -- (v33)[postaction={decorate, decoration={markings,mark=at position .45 with {\arrow[black]{stealth}}}}];
\draw  (v34) -- (1.5,-7.5)[postaction={decorate, decoration={markings,mark=at position .45 with {\arrow[black]{stealth}}}}];
\draw  (v36) -- (1.5,-7.5)[postaction={decorate, decoration={markings,mark=at position .45 with {\arrow[black]{stealth}}}}];
\draw  (v37) -- (1.5,-7.5)[postaction={decorate, decoration={markings,mark=at position .45 with {\arrow[black]{stealth}}}}];
\draw  (1.5,-7.5) -- (v38)[postaction={decorate, decoration={markings,mark=at position .75 with {\arrow[black]{stealth}}}}];
\draw  (1.5,-7.5) -- (v39)[postaction={decorate, decoration={markings,mark=at position .75 with {\arrow[black]{stealth}}}}];
\draw  (v40) -- (3.5,-7.5)[postaction={decorate, decoration={markings,mark=at position .45 with {\arrow[black]{stealth}}}}];
\draw  (3.5,-7.5) -- (v42)[postaction={decorate, decoration={markings,mark=at position .75 with {\arrow[black]{stealth}}}}];

\node [scale=0.7](v16) at (4.5,-6) {};
\node [scale=0.7](v17) at (5.5,-6) {};
\node [scale=0.7](v18) at (4.5,-9) {};
\node [scale=0.7] (v19) at (5.5,-9) {};
\draw  (v16)  --  (v18)[postaction={decorate, decoration={markings,mark=at position .45 with {\arrow[black]{stealth}}}}];
\draw  (v17)  --  (v19)[postaction={decorate, decoration={markings,mark=at position .45 with {\arrow[black]{stealth}}}}];
\end{tikzpicture}
\end{matrix}
\end{matrix}
\end{matrix}
$$
\caption{ Three elementary PPGs and their composition.}
\label{7}
\end{figure}
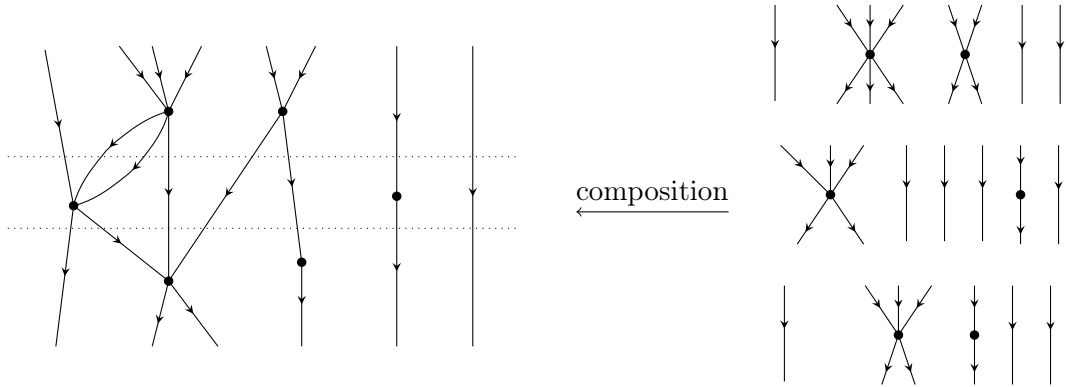

A PPG is called \textbf{prime} if it is connected and has exactly one vertex which is neither a source nor a sink, see the left of Fig \ref{8}. A PPG is called \textbf{unitary} if it has exactly one edge, see the middle of Fig \ref{8}.

\begin{figure}[htbp]
\centering
$$
\begin{matrix}
\begin{matrix}
\begin{tikzpicture}[scale=0.32]
\draw [dashed] (-4,3) rectangle (2.5,-2);
\node (v2) at (-0.5,0.5) {};
\draw[fill] (-0.5,0.5) circle [radius=0.15];
\node (v1) at (-3,3) {};
\node (v3) at (-2,3) {};
\node (v4) at (1.5,3) {};
\node (v5) at (-3,-2) {};
\node (v6) at (-2,-2) {};
\node (v7) at (2,-2) {};
\draw[fill] (-3.2,3) circle [radius=0.15];\draw[fill] (-2,3) circle [radius=0.15];\draw[fill] (1.5,3) circle [radius=0.15];
\draw  (-3.2,3) -- (-0.5,0.5)[postaction={decorate, decoration={markings,mark=at position .5 with {\arrow[black]{stealth}}}}];
\draw  (-2,3) -- (-0.5,0.5)[postaction={decorate, decoration={markings,mark=at position .5 with {\arrow[black]{stealth}}}}];
\draw (1.5,3) -- (-0.5,0.5)[postaction={decorate, decoration={markings,mark=at position .5 with {\arrow[black]{stealth}}}}];
\draw  (-0.5,0.5) -- (-3,-2)[postaction={decorate, decoration={markings,mark=at position .5 with {\arrow[black]{stealth}}}}];
\draw  (-0.5,0.5) -- (-1.8,-2)[postaction={decorate, decoration={markings,mark=at position .5 with {\arrow[black]{stealth}}}}];
\draw  (-0.5,0.5) -- (2,-2)[postaction={decorate, decoration={markings,mark=at position .5 with {\arrow[black]{stealth}}}}];
\draw[fill] (-3,-2) circle [radius=0.15];\draw[fill] (-1.8,-2) circle [radius=0.15];\draw[fill] (2,-2) circle [radius=0.15];
\node at (-0.2,2.4) {$\cdots$};
\node at (-0.2,-1.4) {$\cdots$};
\end{tikzpicture}
\end{matrix}&&&&
\begin{matrix}
\begin{tikzpicture}[scale=0.35]
\draw [dashed] (-2,3) rectangle (1,-1.5);
\node (v1) at (-0.5,3) {};
\node (v2) at (-0.5,-1.5) {};
\draw  (-0.5,3) -- (-0.5,-1.5)[postaction={decorate, decoration={markings,mark=at position .5 with {\arrow[black]{stealth}}}}];
\draw[fill] (-0.5,3) circle [radius=0.15];
\draw[fill] (-0.5,-1.5) circle [radius=0.15];
\end{tikzpicture}
\end{matrix}&&&&\begin{matrix}
\begin{tikzpicture}[scale=0.35]

\draw [dashed] (-2,3) rectangle (6.5,-1.5);

\node (v1) at (-1,3) {};
\node (v2) at (-1,-1.5) {};
\node (v3) at (0.5,3) {};
\node (v4) at (0.5,-1.5) {};
\node (v5) at (5,3) {};
\node (v6) at (5,-1.5) {};
\node at (3,0.8) {$\cdots$};

\draw  (-1,3) -- (-1,-1.5)[postaction={decorate, decoration={markings,mark=at position .5 with {\arrow[black]{stealth}}}}];
\draw  (0.5,3) -- (0.5,-1.5)[postaction={decorate, decoration={markings,mark=at position .5 with {\arrow[black]{stealth}}}}];
\draw  (5,3) -- (5,-1.5)[postaction={decorate, decoration={markings,mark=at position .5 with {\arrow[black]{stealth}}}}];
\draw[fill] (-1,3) circle [radius=0.15];
\draw[fill] (-1,-1.5) circle [radius=0.15];
\draw[fill] (0.5,-1.5) circle [radius=0.15];
\draw[fill] (0.5,3) circle [radius=0.15];
\draw[fill] (5,-1.5) circle [radius=0.15];
\draw[fill] (5,3) circle [radius=0.15];
\end{tikzpicture}
\end{matrix}
\end{matrix}
$$
\caption{}
\label{8}
\end{figure}

\begin{prop}\label{Y}
Any elementary PPG is equivalent to a tensor product of prime and unitary ones, which is unique up to equivalence.
\end{prop}

A PPG is called \textbf{invertible} if  each of its connected components has exactly one edge, see the right of Fig \ref{8}. Any invertible PPG is equivalent to a tensor product of unitary ones.

The meaning of a prime (unitary, invertible, elementary) PPG-class is clear.

As a summarization, we provides the following result, which is easy to check.
\begin{thm}\label{ppg}
There is a monoidal category $\mathbf{PPG}$ with non-negative integers as objects and with PPG-classes and the empty graph $\bigcirc$ as morphisms. For $m,n\geq 1$, a morphism from $m$ to $n$ is a PPG-class with $m$ sources and $n$ sinks. On objects the tensor product is given by the addition of integers, and on morphisms the tensor product is the tensor product of PPG-classes and $\bigcirc$. The composition is the composition of PPG-classes and $\bigcirc$.  The unit object is $0$, whose identity morphism is $\bigcirc$. The identity morphism of a non-unit object $n$ is the invertible PPG-class with $n$ connected components.
\end{thm}

There is a tensor scheme $\mathbf{PRM}$ with morphisms being prime PPG-classes, with only one object and with source and target maps given by the numbers of sources and sinks, respectively. According to our definition of a PPG (Definition \ref{progre}), $\mathbf{PRM}$ is actually a semi-tensor scheme (see Definition \ref{T}). Applying Joyal and Street's construction of free monoidal category on a tensor scheme (Theorem $1.2$ in \cite{[JS91]}), we get the following result.

\begin{thm}\label{free1}
The monoidal category $\mathbf{PPG}$ is free on the tensor scheme $\mathbf{PRM}$.
\end{thm}

\begin{rem}\label{iso}
Note that in Theorem \ref{ppg}, we trivially assume $\bigcirc\otimes \bigcirc=\bigcirc$, $\bigcirc\circ \bigcirc=\bigcirc$ and for any PPG-class $G$, $G\otimes \bigcirc=\bigcirc\otimes G=G$. Also note that,  the unit object $0$ is \textbf{isolated} in $\mathbf{PPG}$, that is, for any $n\geq1$ there is no morphism from $n$ to $0$ or from $0$ to $n$, and $\bigcirc$ is the unique morphism from $0$ to $0$.
 \end{rem}

\section{A combinatorial framework}
In this section, we show a combinatorial  framework for the graphical calculus. We begin by fixing some terminologies.
\begin{defn}\label{pro}
A \textbf{processive graph} is a non-empty acyclic directed graph with all sources and sinks being of degree one.
\end{defn}
The underlying graph in Fig \ref{1} is a processive graph. Clearly, a processive graph has no isolated vertex, at least one source and at least one sink. A processive graph is a special \textbf{progressive graph} introduced by Joyal and Street \cite{[JS91]}, which is exactly an acyclic directed graph possibly with isolated vertices, see the underlying graph in Fig \ref{25} for an example.

For a processive graph, a vertex of degree one is called a \textbf{boundary vertex}, otherwise it is called an \textbf{internal vertex}. An isolated vertex is an internal vertex. A vertex of a processive graph is a boundary vertex if and only if it is a source or a sink. A vertex is called \textbf{processive} if it is neither a source nor a sink.
A vertex of a processive graph is processive if and only if it is an internal vertex.

As previous, a processive graph is called \textbf{elementary} if each of its connected components has at most one processive vertex, and is called \textbf{prime} if it is connected and has exactly one processive vertex, and is called \textbf{unitary} if it has exactly one edge, and is called \textbf{invertible} if it has no processive vertex.

The following is the key notion in this framework, which serves as a combinatorial counterpart of a PPG-class.

\begin{defn}\label{pop-graph}
A \textbf{planarly ordered processive graph} or \textbf{POP-graph}, is a processive graph $G$ equipped with a linear order $\prec$ on its edge set $E(G)$ such that

$(P1)$ $e_1\rightarrow e_2$ implies $e_1\prec e_2$;

$(P2)$  if $e_1\prec e_2 \prec e_3$ and $e_1\rightarrow e_3$, then either $e_1\rightarrow e_2$ or $e_2\rightarrow e_3$,

where $e_1\rightarrow e_2$ denotes that there is a directed path starting from $e_1$, ending with $e_2$.
\end{defn}
We write the POP-graph as $(G,\prec)$ and call the order $\prec$ a \textbf{planar order} of $G$.

\begin{rem}
The notion of a planar order can also be defined for a general poset $(X,\leq)$,  and in this case, this notion coincides with the notion of a \textbf{nonseparating linear extension} of $(X,\leq)$ introduced in \cite{[BFR71]}.
\end{rem}

A POP-graph is called \textbf{elementary} (\textbf{prime}, \textbf{unitary}, \textbf{invertible}) if the underlying processive graph is elementary (prime, unitary, invertible).

\begin{ex}\label{ex1} Fig \ref{10} shows three examples of elementary POP-graphs, where $(P1)$ and $(P2)$ are easy to check.  However, there is a natural convention for drawing POP-graphs, especially those elementary ones, which is from left to right and from up to down according to the planar order. Fig \ref{10} shows the convention.  Actually, our definition of a planar order is motivated by these examples.

\begin{figure}[htbp]
\centering
$$
\begin{matrix}
\begin{matrix}
\begin{tikzpicture}[scale=0.5]
\node (v2) at (-2,0.5) {};
\node (v5) at (0.5,0.5) {};
\draw[fill] (0.5,0.5) circle [radius=0.11];
\node (v12) at (3,0.5) {};
\draw[fill] (3,0.5) circle [radius=0.11];
\node [scale=0.7](v1) at (-2,2) {$1$};
\node (v3) at (-2,-1) {};
\node [scale=0.7] (v4) at (-0.5,2) {$2$};
\node [scale=0.7](v6) at (0.5,2) {$3$};
\node [scale=0.7](v7) at (1.5,2) {$4$};
\node [scale=0.7](v8) at (-0.5,-1) {$5$};
\node [scale=0.7](v9) at (0.5,-1) {$6$};
\node [scale=0.7] (v10) at (1.5,-1) {$7$};
\node [scale=0.7](v11) at (2.5,2) {$8$};
\node [scale=0.7](v13) at (3.5,2) {$9$};
\node [scale=0.7](v14) at (2.5,-1) {$10$};
\node [scale=0.7] (v15) at (3.5,-1) {$11$};
\draw  (v1) -- (v3)[postaction={decorate, decoration={markings,mark=at position .45 with {\arrow[black]{stealth}}}}];
\draw  (v4)  --  (0.5,0.5)[postaction={decorate, decoration={markings,mark=at position .45 with {\arrow[black]{stealth}}}}];
\draw  (v6)  --  (0.5,0.5)[postaction={decorate, decoration={markings,mark=at position .45 with {\arrow[black]{stealth}}}}];
\draw  (v7)  --  (0.5,0.5)[postaction={decorate, decoration={markings,mark=at position .45 with {\arrow[black]{stealth}}}}];
\draw  (0.5,0.5)  --  (v8)[postaction={decorate, decoration={markings,mark=at position .75 with {\arrow[black]{stealth}}}}];
\draw  (0.5,0.5)  --  (v9)[postaction={decorate, decoration={markings,mark=at position .75 with {\arrow[black]{stealth}}}}];
\draw  (0.5,0.5)  --  (v10)[postaction={decorate, decoration={markings,mark=at position .75 with {\arrow[black]{stealth}}}}];
\draw  (v11)  --  (3,0.5)[postaction={decorate, decoration={markings,mark=at position .45 with {\arrow[black]{stealth}}}}];
\draw  (v13)  --  (3,0.5)[postaction={decorate, decoration={markings,mark=at position .45 with {\arrow[black]{stealth}}}}];
\draw  (3,0.5)  --  (v14)[postaction={decorate, decoration={markings,mark=at position .75 with {\arrow[black]{stealth}}}}];
\draw  (3,0.5)  --  (v15)[postaction={decorate, decoration={markings,mark=at position .75 with {\arrow[black]{stealth}}}}];

\node [scale=0.7](v16) at (4.5,2) {$12$};
\node [scale=0.7](v17) at (5.5,2) {$13$};
\node [scale=0.7](v18) at (4.5,-1) {};
\node [scale=0.7] (v19) at (5.5,-1) {};
\draw  (v16)  --  (v18)[postaction={decorate, decoration={markings,mark=at position .45 with {\arrow[black]{stealth}}}}];
\draw  (v17)  --  (v19)[postaction={decorate, decoration={markings,mark=at position .45 with {\arrow[black]{stealth}}}}];
\end{tikzpicture}

\end{matrix}\\
\begin{matrix}
\begin{tikzpicture}[scale=0.5]
\node (v17) at (-0.5,-3.5) {};
\draw[fill] (-0.5,-3.5) circle [radius=0.11];
\node [scale=0.7](v16) at (-2,-2) {$1$};
\node [scale=0.7](v18) at (-0.5,-2) {$2$};
\node [scale=0.7](v19) at (0.5,-2) {$3$};
\node (v23) at (1.5,-3.5) {};
\node (v26) at (2.5,-3.5) {};
\node (v29) at (3.5,-3.5) {};
\node [scale=0.7](v20) at (-1.5,-5) {$4$};
\node [scale=0.7](v21) at (0.5,-5) {$5$};
\node [scale=0.7](v22) at (1.5,-2) {$6$};
\node (v24) at (1.5,-5) {};
\node [scale=0.7](v25) at (2.5,-2) {$7$};
\node (v27) at (2.5,-5) {};
\node [scale=0.7](v28) at (3.5,-2) {$8$};
\node (v30) at (3.5,-5) {};

\draw  (v16) -- (-0.5,-3.5)[postaction={decorate, decoration={markings,mark=at position .45 with {\arrow[black]{stealth}}}}];
\draw  (v18) -- (-0.5,-3.5)[postaction={decorate, decoration={markings,mark=at position .45 with {\arrow[black]{stealth}}}}];
\draw  (v19) -- (-0.5,-3.5)[postaction={decorate, decoration={markings,mark=at position .45 with {\arrow[black]{stealth}}}}];
\draw  (-0.5,-3.5) -- (v20)[postaction={decorate, decoration={markings,mark=at position .75 with {\arrow[black]{stealth}}}}];
\draw  (-0.5,-3.5) -- (v21)[postaction={decorate, decoration={markings,mark=at position .75 with {\arrow[black]{stealth}}}}];
\draw  (v22) -- (v24)[postaction={decorate, decoration={markings,mark=at position .45 with {\arrow[black]{stealth}}}}];
\draw  (v25) -- (v27)[postaction={decorate, decoration={markings,mark=at position .45 with {\arrow[black]{stealth}}}}];
\draw  (v28) -- (v30)[postaction={decorate, decoration={markings,mark=at position .45 with {\arrow[black]{stealth}}}}];

\node [scale=0.7](v66) at (4.5,-2) {$9$};
\node [scale=0.7](v67) at (5.5,-2) {$11$};
\node [scale=0.7](v68) at (4.5,-5) {$10$};
\node [scale=0.7] (v69) at (5.5,-5) {};
\draw  (v66)  --  (v68)[postaction={decorate, decoration={markings,mark=at position .2 with {\arrow[black]{stealth}}}}][postaction={decorate, decoration={markings,mark=at position .9 with {\arrow[black]{stealth}}}}];
\draw  (v67)  --  (v69)[postaction={decorate, decoration={markings,mark=at position .45 with {\arrow[black]{stealth}}}}];
\draw[fill] (4.5,-3.5) circle [radius=0.11];
\end{tikzpicture}
\end{matrix}\\
\begin{matrix}
\begin{tikzpicture}[scale=0.5]

\node (v32) at (-1.5,-7.5) {};

\node [scale=0.7](v31) at (-1.5,-6) {$1$};
\node (v33) at (-1.5,-9) {};
\node (v35) at (1.5,-7.5) {};
\draw [fill] (1.5,-7.5) circle [radius=0.11];
\node [scale=0.7] (v34) at (0.5,-6) {$2$};
\node [scale=0.7](v36) at (1.5,-6) {$3$};
\node [scale=0.7](v37) at (2.5,-6) {$4$};
\node [scale=0.7](v38) at (1,-9) {$5$};
\node [scale=0.7](v39) at (2,-9) {$6$};
\node (v41) at (3.5,-7.5) {};
\draw [fill](3.5,-7.5) circle [radius=0.11];
\node [scale=0.7](v40) at (3.5,-6) {$7$};
\node [scale=0.7](v42) at (3.5,-9) {$8$};

\draw  (v31) -- (v33)[postaction={decorate, decoration={markings,mark=at position .45 with {\arrow[black]{stealth}}}}];
\draw  (v34) -- (1.5,-7.5)[postaction={decorate, decoration={markings,mark=at position .45 with {\arrow[black]{stealth}}}}];
\draw  (v36) -- (1.5,-7.5)[postaction={decorate, decoration={markings,mark=at position .45 with {\arrow[black]{stealth}}}}];
\draw  (v37) -- (1.5,-7.5)[postaction={decorate, decoration={markings,mark=at position .45 with {\arrow[black]{stealth}}}}];
\draw  (1.5,-7.5) -- (v38)[postaction={decorate, decoration={markings,mark=at position .75 with {\arrow[black]{stealth}}}}];
\draw  (1.5,-7.5) -- (v39)[postaction={decorate, decoration={markings,mark=at position .75 with {\arrow[black]{stealth}}}}];
\draw  (v40) -- (3.5,-7.5)[postaction={decorate, decoration={markings,mark=at position .45 with {\arrow[black]{stealth}}}}];
\draw  (3.5,-7.5) -- (v42)[postaction={decorate, decoration={markings,mark=at position .75 with {\arrow[black]{stealth}}}}];

\node [scale=0.7](v16) at (4.5,-6) {$9$};
\node [scale=0.7](v17) at (5.5,-6) {$10$};
\node [scale=0.7](v18) at (4.5,-9) {};
\node [scale=0.7] (v19) at (5.5,-9) {};
\draw  (v16)  --  (v18)[postaction={decorate, decoration={markings,mark=at position .45 with {\arrow[black]{stealth}}}}];
\draw  (v17)  --  (v19)[postaction={decorate, decoration={markings,mark=at position .45 with {\arrow[black]{stealth}}}}];
\end{tikzpicture}
\end{matrix}
\end{matrix}
$$

\caption{}
\label{10}
\end{figure}

\end{ex}

In this paper, when drawing POP-graphs, we will always use the convention shown in Fig \ref{10}. A graphical explanation of $(P1)$ and $(P2)$ under this convention is shown in Fig \ref{11}.

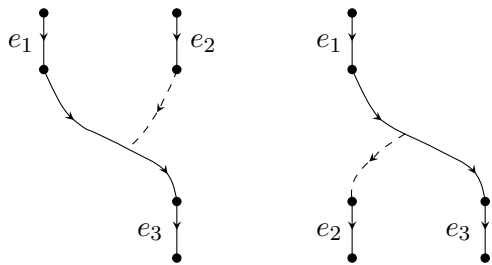
\begin{figure}[H]
\centering
$$
\begin{matrix}
\begin{matrix}
\begin{tikzpicture}[scale=0.5]
\node (v1) at (-1,3) {};
\draw[fill] (-1,3)  circle [radius=0.11];
\node (v2) at (-1,1.5) {};
\draw[fill] (-1,1.5) circle [radius=0.11];
\node (v3) at (2.5,-2) {};
\draw[fill] (2.5,-2) circle [radius=0.11];
\node (v4) at (2.5,-3.5) {};
\draw[fill] (2.5,-3.5) circle [radius=0.11];
\draw (-1,3) --(-1,1.5)[postaction={decorate, decoration={markings,mark=at position 0.5 with {\arrow[black]{stealth}}}}];
\draw  (2.5,-2) -- (2.5,-3.5)[postaction={decorate, decoration={markings,mark=at position 0.5 with {\arrow[black]{stealth}}}}];
\draw  plot[smooth, tension=.7] coordinates {(v2) (-0.5,0.5) (0,0)(0.4,-0.2) (1.2,-0.6) (2.2,-1.2) (v3)}[postaction={decorate, decoration={markings,mark=at position 0.3 with {\arrow[black]{stealth}}}}][postaction={decorate, decoration={markings,mark=at position 0.85 with {\arrow[black]{stealth}}}}];
\node at (-1.6,2.2) {};
\node at (1.8,-2.8) {};
\node (v5) at (2.5,3) {};
\draw[fill] (2.5,3) circle [radius=0.11];
\node (v6) at (2.5,1.5) {};
\draw[fill] (2.5,1.5) circle [radius=0.11];

\draw  (2.5,3) --(2.5,1.5)[postaction={decorate, decoration={markings,mark=at position 0.5 with {\arrow[black]{stealth}}}}];
\draw [dashed]plot[smooth, tension=.7] coordinates {(v6) (2.4,1.2) (2.2,0.8) (2,0.4) (1.6,-0.2) (1.2,-0.6)}[postaction={decorate, decoration={markings,mark=at position 0.5 with {\arrow[black]{stealth}}}}];
\node at (3.2,2.2) {$e_2$};
\node at (-1.6,2.2) {$e_1$};
\node at (1.8,-2.8) {$e_3$};
\end{tikzpicture}
\end{matrix}&&&
\begin{matrix}
\begin{tikzpicture}[scale=0.5]
\node (v1) at (-1,3) {};
\draw[fill] (-1,3)  circle [radius=0.11];
\node (v2) at (-1,1.5) {};
\draw[fill] (-1,1.5) circle [radius=0.11];
\node (v3) at (2.5,-2) {};
\draw[fill] (2.5,-2) circle [radius=0.11];
\node (v4) at (2.5,-3.5) {};
\draw[fill] (2.5,-3.5) circle [radius=0.11];
\draw (-1,3) --(-1,1.5)[postaction={decorate, decoration={markings,mark=at position 0.5 with {\arrow[black]{stealth}}}}];
\draw  (2.5,-2) -- (2.5,-3.5)[postaction={decorate, decoration={markings,mark=at position 0.5 with {\arrow[black]{stealth}}}}];
\draw  plot[smooth, tension=.7] coordinates {(v2) (-0.5,0.5) (0,0) (1.2,-0.6) (2.2,-1.2) (v3)}[postaction={decorate, decoration={markings,mark=at position 0.3 with {\arrow[black]{stealth}}}}][postaction={decorate, decoration={markings,mark=at position 0.85 with {\arrow[black]{stealth}}}}];
\node at (-1.6,2.2) {};
\node at (1.8,-2.8) {};
\node at (-1.6,2.2) {$e_1$};
\node at (1.8,-2.8) {$e_3$};
\node (v7) at (-1,-2) {};
\draw[fill] (-1,-2) circle [radius=0.11];
\node (v8) at (-1,-3.5) {};
\draw[fill] (-1,-3.5) circle [radius=0.11];
\draw  (-1,-2) -- (-1,-3.5)[postaction={decorate, decoration={markings,mark=at position 0.5 with {\arrow[black]{stealth}}}}];
\draw  [dashed]plot[smooth, tension=.7] coordinates {(0.4,-0.2) (-0.2,-0.6) (-0.4,-0.8) (-0.8,-1.2) (-1,-1.6) (v7)}[postaction={decorate, decoration={markings,mark=at position 0.5 with {\arrow[black]{stealth}}}}];
\node at (-1.6,-2.8) {$e_2$};
\end{tikzpicture}
\end{matrix}
\end{matrix}
$$
\caption{ If $e_1\prec e_2 \prec e_3$ and $e_1\rightarrow e_3$, then $e_2$ has only two possible drawings.}
\label{11}
\end{figure}

\begin{ex}\label{ex2}
Fig \ref{12} shows an example of non-elementary POP-graph, which is actually a composition of the elementary POP-graphs in Fig \ref{10}.

\begin{figure}[htbp]
\centering
\begin{tikzpicture}[scale=0.4]

\node (v2) at (-4,3) {};
\node (v1) at (-1.5,5.5) {};
\node (v7) at (-1.5,1) {};
\node (v9) at (1.5,5.5) {};
\node (v14) at (2,1.5) {};
\node [scale=0.7](v3) at (-3,7.5) {};
\node [scale=0.7](v4) at (-2,7.5) {};
\node [scale=0.7](v5) at (-0.5,7.5) {};
\node[scale=0.7] (v6) at (-4.8,7.4) {};
\node [scale=0.7](v11) at (-4.5,-1) {};
\node [scale=0.7](v12) at (-2,-1) {};
\node [scale=0.7](v13) at (0,-1) {};
\node [scale=0.7](v15) at (2,-1) {};
\node [scale=0.7](v8) at (1,7.5) {};
\node [scale=0.7](v10) at (2.5,7.5) {};
\node [scale=0.7] at (-2.5,3.5) {$6$};
\node[scale=0.7]  at (-3,5.2) {$5$};
\node [scale=0.7] at (-1.2,3.3) {$9$};
\node [scale=0.7] at (0.5,3.25) {$12$};
\node [scale=0.7] at (2.2,3.7) {$15$};
\node [scale=0.7] at (-3,1.7) {$8$};
\draw[fill] (-4,3) circle [radius=0.11];
\draw[fill] (v1) circle [radius=0.11];
\draw[fill] (v7) circle [radius=0.11];
\draw[fill] (v9) circle [radius=0.11];
\draw[fill] (v14) circle [radius=0.11];
\draw  plot[smooth, tension=1] coordinates {(v1) (-2.5,5)  (-3.5,4) (v2)}[postaction={decorate, decoration={markings,mark=at position .5 with {\arrow[black]{stealth}}}}];
\draw  plot[smooth, tension=1] coordinates {(v1) (-2,4.5)  (-3,3.5) (v2)}[postaction={decorate, decoration={markings,mark=at position .5 with {\arrow[black]{stealth}}}}];

\draw  (v3) -- (-1.5,5.5)[postaction={decorate, decoration={markings,mark=at position .5 with {\arrow[black]{stealth}}}}];
\draw  (v4) -- (-1.5,5.5)[postaction={decorate, decoration={markings,mark=at position .5 with {\arrow[black]{stealth}}}}];
\draw  (v5) -- (-1.5,5.5)[postaction={decorate, decoration={markings,mark=at position .5 with {\arrow[black]{stealth}}}}];

\draw  (v6) -- (-4,3)[postaction={decorate, decoration={markings,mark=at position .5 with {\arrow[black]{stealth}}}}];
\draw  (-1.5,5.5) -- (-1.5,1)[postaction={decorate, decoration={markings,mark=at position .5 with {\arrow[black]{stealth}}}}];
\draw  (-4,3) -- (-1.5,1)[postaction={decorate, decoration={markings,mark=at position .5 with {\arrow[black]{stealth}}}}];

\draw  (v8)--(1.5,5.5)[postaction={decorate, decoration={markings,mark=at position .5 with {\arrow[black]{stealth}}}}];
\draw  (v10) -- (1.5,5.5)[postaction={decorate, decoration={markings,mark=at position .5 with {\arrow[black]{stealth}}}}];
\draw  (1.5,5.5) -- (-1.5,1)[postaction={decorate, decoration={markings,mark=at position .5 with {\arrow[black]{stealth}}}}];
\draw  (-4,3) -- (v11)[postaction={decorate, decoration={markings,mark=at position .5 with {\arrow[black]{stealth}}}}];
\draw  (-1.5,1) -- (v12)[postaction={decorate, decoration={markings,mark=at position .65 with {\arrow[black]{stealth}}}}];
\draw  (v13) -- (-1.5,1)[postaction={decorate, decoration={markings,mark=at position .5 with {\arrowreversed[black]{stealth}}}}];
\draw  (1.5,5.5) -- (2,1.5)[postaction={decorate, decoration={markings,mark=at position .5 with {\arrow[black]{stealth}}}}];
\draw  (2,1.5) -- (v15)[postaction={decorate, decoration={markings,mark=at position .5 with {\arrow[black]{stealth}}}}];

\node (v20) at (4.5,3.25) {};

\node[scale=0.7] (v21) at (4.5,7.5) {};

\node [scale=0.7](v22) at (4.5,-1) {};
\draw[fill] (v20) circle [radius=0.11];
\draw  (v22) -- (4.5,3.25)[postaction={decorate, decoration={markings,mark=at position .5 with {\arrowreversed[black]{stealth}}}}];
\draw  (4.5,3.24) -- (v21)[postaction={decorate, decoration={markings,mark=at position .5 with {\arrowreversed[black]{stealth}}}}];

\node [scale=0.7](v23) at (6.5,7.5) {};
\node (v24) at (6.5,-1) {};
\draw  (v23) -- (v24)[postaction={decorate, decoration={markings,mark=at position .5 with {\arrow[black]{stealth}}}}];

\node [scale=0.7]at (-5,6.5) {$1$};
\node [scale=0.7]at (-3,6.5) {$2$};
\node [scale=0.7]at (-1.5,7) {$3$};
\node [scale=0.7]at (-0.5,6.5) {$4$};
\node [scale=0.7]at (0.6,6.5) {$10$};
\node [scale=0.7]at (2.6,6.5) {$11$};
\node [scale=0.7]at (5,6) {$17$};
\node [scale=0.7]at (-4.7,0.5) {$7$};
\node [scale=0.7]at (-2.5,0) {$13$};
\node [scale=0.7]at (0,0) {$14$};
\node [scale=0.7]at (2.5,0) {$16$};
\node [scale=0.7]at (5,0.5) {$18$};
\node [right][scale=0.7] at (6.5,3.5) {$19$};
\end{tikzpicture}
\caption{}
\label{12}
\end{figure}

\end{ex}
The notion of an isomorphism of two POP-graphs is clear. Since planar orders are linear orders, two POP-graphs have at most one isomorphism, therefore we do not bother to say an isomorphic class of POP-graphs. When there is an isomorphism between $(G_1,\prec_1)$ and $(G_2,\prec_2)$, we write $(G_1,\prec_1)=(G_2,\prec_2)$.

To introduce tensor product and composition for POP-graphs, we need some notations. For a finite set $S$ with a linear order $\prec$, if $S=\sqcup_{i=1}^n E_i$ and $\max(E_i)\prec \min(E_j)$ for any $i<j$, then we write  $ \prec= E_1\triangleleft E_2\triangleleft\cdots\triangleleft E_n.$  In this case, each $E_i$ $(1\leq i\leq n)$ is an \textbf{interval} of $(S,\prec)$, which is of the form $[a, b]=\{s\in S\mid a\preceq s\preceq b\}$ for some $a, b\in S$. Similarly, we use the notations $(a,b)$, $(a,b]$, and $[a,b)$ as usual.
We also write $1=\min(S)$ and $+\infty=\max(S)$.

Given two POP-graphs $(G_1,\prec_1)$ and $(G_2,\prec_2)$, their \textbf{tensor product} is defined as the POP-graph $(G_1\sqcup G_2, \prec_1 \triangleleft\prec_2)$, where all edges of $G_1$ are smaller than edges of $G_2$. It is easy to see that the tensor product is associative.
\begin{ex} Fig \ref{13}  shows a tensor product of two prime POP-graphs.

\begin{figure}[htbp]
\centering
\begin{tikzpicture}[scale=0.9]
\node[scale=0.8] (v1) at (-4,1.5) {$1$};
\node[scale=0.8] (v3) at (-3,1.5) {$2$};
\node (v2) at (-3.5,0.5) {};
\draw[fill] (v2) circle [radius=0.07];
\node[scale=0.8] (v4) at (-4,-0.5) {$3$};
\node [scale=0.8](v5) at (-3,-0.5) {$4$};
\node at (-2,0.5) {$\otimes$};
\node [scale=0.8](v6) at (-1,1.5) {$1$};
\node [scale=0.8](v8) at (0,1.5) {$2$};
\node (v7) at (-0.5,0.5) {};
\draw[fill] (v7) circle [radius=0.07];
\node [scale=0.8](v9) at (-0.5,-0.5) {$3$};
\node at (1,0.5) {$=$};
\node[scale=0.8] (v10) at (2,1.5) {$1$};
\node [scale=0.8](v12) at (3,1.5) {$2$};
\node (v11) at (2.5,0.5) {};
\draw[fill] (v11) circle [radius=0.07];
\node [scale=0.8](v13) at (2,-0.5) {$3$};
\node [scale=0.8](v14) at (3,-0.5) {$4$};
\node [scale=0.8](v15) at (4,1.5) {$5$};
\node [scale=0.8](v17) at (5,1.5) {$6$};
\node (v16) at (4.5,0.5) {};
\draw[fill] (v16) circle [radius=0.07];
\node [scale=0.8](v18) at (4.5,-0.5) {$7$};
\draw  (v1) -- (-3.5,0.5)[postaction={decorate, decoration={markings,mark=at position .5 with {\arrow[black]{stealth}}}}];
\draw  (v3) -- (-3.5,0.5)[postaction={decorate, decoration={markings,mark=at position .5 with {\arrow[black]{stealth}}}}];
\draw  (-3.5,0.5) -- (v4)[postaction={decorate, decoration={markings,mark=at position .65 with {\arrow[black]{stealth}}}}];
\draw  (-3.5,0.5)-- (v5)[postaction={decorate, decoration={markings,mark=at position .65 with {\arrow[black]{stealth}}}}];
\draw  (v6) -- (-0.5,0.5)[postaction={decorate, decoration={markings,mark=at position .5 with {\arrow[black]{stealth}}}}];
\draw  (v8) -- (-0.5,0.5)[postaction={decorate, decoration={markings,mark=at position .5 with {\arrow[black]{stealth}}}}];
\draw  (-0.5,0.5) -- (v9)[postaction={decorate, decoration={markings,mark=at position .65 with {\arrow[black]{stealth}}}}];
\draw  (v10) -- (2.5,0.5)[postaction={decorate, decoration={markings,mark=at position .5 with {\arrow[black]{stealth}}}}];
\draw  (v12)-- (2.5,0.5)[postaction={decorate, decoration={markings,mark=at position .5 with {\arrow[black]{stealth}}}}];
\draw  (2.5,0.5) -- (v13)[postaction={decorate, decoration={markings,mark=at position .65 with {\arrow[black]{stealth}}}}];
\draw  (2.5,0.5)-- (v14)[postaction={decorate, decoration={markings,mark=at position .65 with {\arrow[black]{stealth}}}}];
\draw  (v15) -- (4.5,0.5)[postaction={decorate, decoration={markings,mark=at position .5 with {\arrow[black]{stealth}}}}];
\draw  (v17) -- (4.5,0.5)[postaction={decorate, decoration={markings,mark=at position .5 with {\arrow[black]{stealth}}}}];
\draw  (4.5,0.5) -- (v18)[postaction={decorate, decoration={markings,mark=at position .65 with {\arrow[black]{stealth}}}}];

\end{tikzpicture}
\caption{}
\label{13}
\end{figure}
\end{ex}

An edge of a processive graph is called an \textbf{input edge} if it starts from a boundary vertex (or a source), and an \textbf{output edge} if it ends with a boundary vertex (or a sink).

Given a POP-graph $(G_1,\prec_1)$ with output edges $o_1\prec_1\cdots\prec_1 o_n$, then $E(G_1)$ can be represented as $$Q_1\triangleleft\{o_1\}\triangleleft...\triangleleft Q_k\triangleleft\{o_k\} \triangleleft...\triangleleft Q_n\triangleleft\{o_n\},$$  where $Q_1 = [1, o_1)$ and $Q_k=(o_{k-1}, o_{k})$ for $2\le k\le n$ are called \textbf{basic intervals with respect to output edges}.

Given a POP-graph $(G_2,\prec_2)$ with input edges $i_1\prec_2\cdots\prec_2 i_n$, then  $E(G_2)$ can be represented as  $$\{i_1\}\triangleleft P_1\triangleleft...\triangleleft \{i_k\}\triangleleft P_k \triangleleft...\triangleleft\{i_n\}\triangleleft P_n,$$ where $P_k=(i_k, i_{k+1})$ for $1\le k\le n-1$, and $P_n=(i_n, +\infty]$ are called \textbf{basic intervals with respect to input edges}.

To define the composition $(G_2,\prec_2)\circ (G_1,\prec_1)$, we first compose $G_1$ and $G_2$ into a processive graph $G_2\circ G_1$
whose edge set is the disjoint union of  $E(G_1)-\{o_1,\cdots,o_n\}$, $E(G_2)-\{i_1,\cdots,i_n\}$ and $\{\overline{e_1},\cdots,\overline{e_n}\}$, where $\overline{e_1},\cdots,\overline{e_n}$ are newly added edges, as Fig \ref{14} shows. We mention that in  $G_2\circ G_1$ we remove all sinks of $G_1$ and all source of $G_2$.

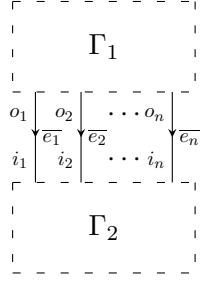
\begin{figure}[htbp]
\centering
\begin{tikzpicture}[scale=0.6]

\draw [loosely dashed] (-1,2.5) -- (-1,0.5);
\draw [loosely dashed] (-1,2.5) --(3,2.5);
\draw [loosely dashed] (3,2.5) -- (3,0.5);
\draw [loosely dashed] (-1,0.5) -- (3,0.5);

\node (v9) at (-0.5,0.5) {};
\node (v13) at (2.5,0.5) {};
\node (v11) at (0.5,0.5) {};
\node (v16) at (-0.5,-1.5) {};
\node (v18) at (0.5,-1.5) {};
\node (v20) at (2.5,-1.5) {};
\node (v15) at (-0.5,-1) {};

\node (v10) at (-0.5,0) {};
\node [scale=0.7][left] at (-0.5,0) {$o_1$};
\node (v12) at (0.5,0) {};
\node [scale=0.7][left] at (0.5,0) {$o_2$};
\node (v14) at (2.5,0) {};
\node [scale=0.7][left] at (2.5,0) {$o_n$};

\node [scale=0.7][right] at (-0.5,-0.5) {$\overline{e_1}$};
\node [scale=0.7][right] at (0.5,-0.5) {$\overline{e_2}$};
\node [scale=0.7][right] at (2.5,-0.5) {$\overline{e_n}$};

\node at (1,1.5) {$\Gamma_1$};

\node at (1.5,0) {$\cdots$};

%%%%%%%%

\node (v5) at (-1,-1.5) {};
\node (v6) at (3,-1.5) {};
\node (v7) at (-1,-3.5) {};
\node (v8) at (3,-3.5) {};

\draw [loosely dashed] (-1,-1.5) -- (3,-1.5);
\draw [loosely dashed] (-1,-1.5) -- (-1,-3.5);
\draw [loosely dashed] (3,-1.5) -- (3,-3.5);
\draw [loosely dashed] (-1,-3.5) -- (3,-3.5);

\node [scale=0.7][left] at (-0.5,-1) {$i_1$};
\node (v17) at (0.5,-1) {};
\node[scale=0.7] [left] at (0.5,-1) {$i_2$};
\node (v19) at (2.5,-1) {};
\node [scale=0.7][left] at (2.5,-1) {$i_n$};
\node (v10) at (-0.5,0) {};

\draw  (-0.5,0.5) -- (-0.5,-1.5)[postaction={decorate, decoration={markings,mark=at position .5 with {\arrow[black]{stealth}}}}];
\draw  (0.5,0.5) -- (0.5,-1.5)[postaction={decorate, decoration={markings,mark=at position .5 with {\arrow[black]{stealth}}}}];
\draw  (2.5,0.5) -- (2.5,-1.5)[postaction={decorate, decoration={markings,mark=at position .5 with {\arrow[black]{stealth}}}}];

\node at (1,-2.5) {$\Gamma_2$};

\node at (1.5,-1) {$\cdots$};
\end{tikzpicture}
\caption{The composition $G_2\circ G_1$, where $o_k$ and $i_k$ are jointed into $\overline{e_k}$.}
\label{14}
\end{figure}

\begin{defn}\label{com}
Assumed as above, the \textbf{composition} of $(G_1,\prec_1)$ and $(G_2,\prec_2)$ is the processive graph $G_2\circ G_1$ together with the composition $\prec_2\circ\prec_1$ of $\prec_1$ and $\prec_2$, which is the linear order on $E(G_2\circ G_1)$ in the shuffled form $$Q_1\triangleleft\{\overline{e_1}\}\triangleleft P_1\triangleleft...\triangleleft Q_k\triangleleft\{\overline{e_k}\}\triangleleft P_k \triangleleft...\triangleleft Q_n\triangleleft\{\overline{e_n}\}\triangleleft P_n.$$
\end{defn}
We write $(G_2,\prec_2)\circ (G_2,\prec_2)=(G_2\circ G_1, \prec_2\circ\prec_1)$.

\begin{ex}
Fig \ref{15} shows a composition of two POP-graphs.

\begin{figure}[htbp]
\centering
\begin{tikzpicture}[scale=0.7]

\node [scale=0.8](v1) at (-2,1) {$1$};
\node (v2) at (-2,-0.5) {};
\draw[fill] (v2) circle [radius=0.07];
\node [scale=0.8](v3) at (-2.5,-2) {$2$};
\node [scale=0.8](v4) at (-1.5,-2) {$3$};
\node [scale=0.8](v6) at (-0.5,-0.5) {};
\draw[fill] (v6) circle [radius=0.07];
\node [scale=0.8](v5) at (-1,1) {$4$};
\node [scale=0.8](v7) at (0,1) {$5$};
\node [scale=0.8](v8) at (-0.5,-2) {$6$};
\draw  (v1) -- (-2,-0.5)[postaction={decorate, decoration={markings,mark=at position .5 with {\arrow[black]{stealth}}}}];
\draw  (-2,-0.5) -- (v3)[postaction={decorate, decoration={markings,mark=at position .65 with {\arrow[black]{stealth}}}}];
\draw(-2,-0.5) -- (v4)[postaction={decorate, decoration={markings,mark=at position .65 with {\arrow[black]{stealth}}}}];
\draw  (v5) -- (-0.5,-0.5)[postaction={decorate, decoration={markings,mark=at position .5 with {\arrow[black]{stealth}}}}];
\draw  (v7) --(-0.5,-0.5)[postaction={decorate, decoration={markings,mark=at position .5 with {\arrow[black]{stealth}}}}];
\draw  (-0.5,-0.5) -- (v8)[postaction={decorate, decoration={markings,mark=at position .65 with {\arrow[black]{stealth}}}}];
\node at (2,-0.5) {$\circ$};

\node at (8,-0.5) {$=$};

\node (v19) at (10,0.5) {};
\draw[fill] (v19) circle [radius=0.07];
\node (v22) at (11.5,0.5) {};
\draw[fill] (v22) circle [radius=0.07];
\node [scale=0.8](v18) at (9.5,2.5) {$1$};
\node[scale=0.8] (v20) at (10.5,2.5) {$2$};
\node [scale=0.8](v21) at (11,2.5) {$7$};
\node [scale=0.8](v23) at (12,2.5) {$8$};
\draw  (v18) -- (10,0.5)[postaction={decorate, decoration={markings,mark=at position .5 with {\arrow[black]{stealth}}}}];
\draw  (v20) -- (10,0.5)[postaction={decorate, decoration={markings,mark=at position .5 with {\arrow[black]{stealth}}}}];
\draw  (v21) -- (11.5,0.5)[postaction={decorate, decoration={markings,mark=at position .5 with {\arrow[black]{stealth}}}}];
\draw  (v23) -- (11.5,0.5)[postaction={decorate, decoration={markings,mark=at position .5 with {\arrow[black]{stealth}}}}];
\node (v24) at (10,-1.5) {};
\draw[fill] (v24) circle [radius=0.07];
\node (v25) at (11.5,-1.5) {};
\draw[fill] (v25) circle [radius=0.07];
\draw  (10,0.5) -- (10,-1.5)[postaction={decorate, decoration={markings,mark=at position .5 with {\arrow[black]{stealth}}}}];
\node [scale=0.8](v26) at (9.5,-3.5) {$4$};
\node [scale=0.8](v27) at (10.5,-3.5) {$5$};
\node [scale=0.8](v28) at (11.5,-3.5) {$10$};
\draw  (10,0.5)-- (11.5,-1.5)[postaction={decorate, decoration={markings,mark=at position .5 with {\arrow[black]{stealth}}}}];
\draw  (11.5,0.5) -- (11.5,-1.5)[postaction={decorate, decoration={markings,mark=at position .5 with {\arrow[black]{stealth}}}}];
\draw (10,-1.5) -- (v26)[postaction={decorate, decoration={markings,mark=at position .65 with {\arrow[black]{stealth}}}}];
\draw  (10,-1.5)-- (v27)[postaction={decorate, decoration={markings,mark=at position .65 with {\arrow[black]{stealth}}}}];
\draw  (11.5,-1.5) --(v28)[postaction={decorate, decoration={markings,mark=at position .65 with {\arrow[black]{stealth}}}}];
\node[scale=0.8] at (9.5,-0.5) {$3$};
\node[scale=0.8] at (10.6,-0.8) {$6$};
\node [scale=0.8]at (12,-0.6) {$9$};

\node (v10) at (4.5,-0.5) {};
\draw[fill] (v10) circle [radius=0.07];
\node (v15) at (6,-0.5) {};
\draw[fill] (v15) circle [radius=0.07];
\node[scale=0.8] (v9) at (4,1) {$1$};
\node [scale=0.8](v11) at (5,1) {$2$};
\node [scale=0.8](v14) at (5.5,1) {$5$};
\node [scale=0.8](v16) at (6.5,1) {$6$};

\node [scale=0.8](v12) at (4,-2) {$3$};
\node [scale=0.8](v13) at (5,-2) {$4$};
\node [scale=0.8](v17) at (6,-2) {$7$};
\draw  (v9) -- (4.5,-0.5)[postaction={decorate, decoration={markings,mark=at position .5 with {\arrow[black]{stealth}}}}];
\draw  (v11) -- (4.5,-0.5)[postaction={decorate, decoration={markings,mark=at position .5 with {\arrow[black]{stealth}}}}];
\draw  (4.5,-0.5) -- (v12)[postaction={decorate, decoration={markings,mark=at position .65 with {\arrow[black]{stealth}}}}];
\draw  (4.5,-0.5) --(v13)[postaction={decorate, decoration={markings,mark=at position .65 with {\arrow[black]{stealth}}}}];
\draw  (v14) --(6,-0.5)[postaction={decorate, decoration={markings,mark=at position .5 with {\arrow[black]{stealth}}}}];
\draw  (v16) -- (6,-0.5)[postaction={decorate, decoration={markings,mark=at position .5 with {\arrow[black]{stealth}}}}];
\draw  (6,-0.5) --(v17)[postaction={decorate, decoration={markings,mark=at position .65 with {\arrow[black]{stealth}}}}];

\node [scale=0.8]at (-2.6,-1.4) {$x$};
\node [scale=0.8]at (-1.4,-1.4) {$y$};
\node[scale=0.8] at (-0.2,-1.4) {$z$};

\node [scale=0.8]at (9.2,-2.8) {$x$};
\node [scale=0.8]at (10.6,-2.8) {$y$};
\node [scale=0.8]at (11.8,-2.8) {$z$};

\node [scale=0.8]at (3.8,0.4) {$a$};
\node[scale=0.8] at (5,0.4) {$b$};
\node[scale=0.8] at (5.4,0.4) {$c$};
\node[scale=0.8] at (6.6,0.4) {$d$};

\node [scale=0.8]at (9.4,1.6) {$a$};
\node [scale=0.8]at (10.5,1.6) {$b$};
\node [scale=0.8]at (10.9,1.6) {$c$};
\node [scale=0.8]at (12,1.6) {$d$};

\end{tikzpicture}

\caption{}
\label{15}
\end{figure}
\end{ex}
To show that $(G_2\circ G_1,\prec_2\circ\prec_1)$ is well-defined,  we need the following key result in this paper, whose proof will be given in Section $5$.

\begin{thm}\label{crux}
The composition of two POP-graphs is again a POP-graph.
\end{thm}
The associativity of the composition directly follows from the definition of $\prec_2\circ\prec_1$.
\begin{ex}
The composition of POP-graphs in Example \ref{ex1} yields the POP-graph in Example \ref{ex2}.
\end{ex}

The following lemma can be directly checked from the definitions.
\begin{lem}
The tensor product and composition of POP-graphs satisfy the middle-four-interchange law.
\end{lem}

Parallel to Theorem \ref{ppg}, we have the following result, which can be easily checked.

\begin{thm}\label{pop}
There is a monoidal category $\mathcal{POP}$ with non-negative integers as objects and with POP-graphs and the empty graph $\bigcirc$ as morphisms. For $m,n\geq 1$, a morphism from $m$ to $n$ is a POP-graph with $m$ sources and $n$ sinks. On objects the tensor product is given by the addition of integers, and on morphisms the tensor product is the tensor product of POP-graphs and $\bigcirc$. The composition is the composition of POP-graphs. The unit object is $0$, whose identity morphism is $\bigcirc$. The identity morphism of a non-unit object $n$ is the invertible POP-graph with $n$ edges.
\end{thm}

Similarly, there is a tensor scheme $\mathcal{PRM}$ with morphisms being  prime POP-graphs, with only one object and with source and target maps given by the numbers of sources and sinks, respectively.
Parallel to Theorem \ref{free1}, we have the following result, which will be proved in Section $7$.
\begin{thm}\label{free2}
The monoidal category $\mathcal{POP}$ is free on the tensor scheme $\mathcal{PRM}$.
\end{thm}

\section{Equivalence of two frameworks}
In this section, we show that the topological and combinatorial frameworks are equivalent. For this, we first give a reformulation of the notion of a processive plane graph.
\begin{defn}
A \textbf{BPP-graph} is a boxed planar drawing of a processive graph $G$, that is, a planar drawing of $G$ such that $G$ is drawn in a plane box with all sources on one horizontal boundary of the plane box and all sinks on the other horizontal boundary of the plane box.
\end{defn}
When the processive graph is clear or irrelevant, we only say a \textbf{BP-drawing} for short.  As shown in Fig \ref{1}, a PPG is exactly an upward BPP-graph.

An \textbf{anchor} \cite{[JS91]} of processive graph $G$ consists of two linear orders, one on the set $I(G)$ of input edges of $G$ and the other on the set $O(G)$ of output edges of $G$. Any BP-drawing of $G$ defines an anchor: $i_1<i_2$ in $I(G)$ if $s(i_1)$ (starting vertex of $i_1$) is on the left of $s(i_2)$ as points of one horizontal boundary and $o_1< o_2$ in $O(G)$ if $t(o_1)$ (ending vertex of $o_1$) is on the left of $t(o_2)$ as points of the other horizontal boundary.
\begin{defn}\label{eq2}
We say two BP-drawings are \textbf{equivalent} if they are connected by a planar isotopy.
\end{defn}
Then Definition \ref{eq1} means that two upward BP-drawings (=PPGs) are equivalent if they are equivalent \textbf{as BP-drawings}. Clearly, equivalent BP-drawings of a processive graph $G$ define the same anchor of $G$.

\begin{rem}\label{three}
The notion of a BPP-graph is essentially equivalent to that of a \textbf{plane $st$ graph} \cite{[BT88]}, see Fig \ref{plan-st}, which is a planar drawing of an acyclic directed graph with exactly one source $s$ and exactly one sink $t$ such that both $s$ and $t$ are drawn on the boundary of the external face (or equivalently, there is a distinguished edge $e$ connecting $s$ and $t$). Planar $st$ graphs have not only important applications in graph theory but also  a direct meaning in category theory. Actually, they are essentially \textbf{pasting schemes for $2$-categories} introduced by Power \cite{[P90]}, a special property of which is that any
of them can be deformed through a planar isotopy into an upward one (\cite{[LEC66]}, or see Theorem $14$ in \cite{[GT95]}).
\begin{figure}[h]
\centering
\begin{tikzpicture}[scale=0.3]
\node (v2) at (-4,3) {};
\draw[fill] (-1.5,5.5) circle [radius=0.15];
\node (v1) at (-1.5,5.5) {};
\node (v7) at (-1.5,1) {};
\node (v9) at (1.5,5.5) {};
\node (v14) at (2,1.5) {};
\node (v3) at (-3,7.5) {};
\node (v4) at (-2,7.5) {};
\node (v5) at (-0.5,7.5) {};
\node (v6) at (-4.8,7.5) {};
\node (v11) at (-4.5,-1) {};
\node (v12) at (-2,-1) {};
\node (v13) at (0,-1) {};
\node (v15) at (2,-1) {};
\node (v8) at (1,7.5) {};
\node (v10) at (2.5,7.5) {};
\node  at (-2.5,3.5) {};
\node  at (-3,5.2) {};
\node  at (-1.2,3.3) {};
\node  at (0.5,3.25) {};
\node  at (2.2,3.7) {};
\node  at (-3,1.7) {};
\draw[fill] (-4,3) circle [radius=0.15];
\draw[fill] (v1) circle [radius=0.15];
\draw[fill] (v7) circle [radius=0.15];
\draw[fill] (v9) circle [radius=0.15];
\draw[fill] (v14) circle [radius=0.15];
\draw[fill] (v1) circle [radius=0.15];
\draw[fill] (v2) circle [radius=0.15];

\draw  plot[smooth, tension=1] coordinates {(v1) (-2.5,5)  (-3.5,4) (v2)}[postaction={decorate, decoration={markings,mark=at position .5 with {\arrow[black]{stealth}}}}];
\draw  plot[smooth, tension=1] coordinates {(v1) (-2,4.5)  (-3,3.5) (v2)}[postaction={decorate, decoration={markings,mark=at position .5 with {\arrow[black]{stealth}}}}];

\draw  (-3,7.5) -- (-1.5,5.5)[postaction={decorate, decoration={markings,mark=at position .5 with {\arrow[black]{stealth}}}}];
\draw  (-2,7.5) -- (-1.5,5.5)[postaction={decorate, decoration={markings,mark=at position .5 with {\arrow[black]{stealth}}}}];
\draw  (-0.5,7.5) -- (-1.5,5.5)[postaction={decorate, decoration={markings,mark=at position .5 with {\arrow[black]{stealth}}}}];

\draw  (-4.8,7.5)-- (-4,3)[postaction={decorate, decoration={markings,mark=at position .5 with {\arrow[black]{stealth}}}}];
\draw  (-1.5,5.5)  -- (-1.5,1)[postaction={decorate, decoration={markings,mark=at position .5 with {\arrow[black]{stealth}}}}];
\draw  (-4,3) -- (-1.5,1)[postaction={decorate, decoration={markings,mark=at position .5 with {\arrow[black]{stealth}}}}];

\draw (1,7.5)--(1.5,5.5)[postaction={decorate, decoration={markings,mark=at position .5 with {\arrow[black]{stealth}}}}];
\draw  (2.5,7.5) -- (1.5,5.5)[postaction={decorate, decoration={markings,mark=at position .5 with {\arrow[black]{stealth}}}}];
\draw  (1.5,5.5) -- (-1.5,1)[postaction={decorate, decoration={markings,mark=at position .5 with {\arrow[black]{stealth}}}}];
\draw  (-4,3) -- (-4.5,-1)[postaction={decorate, decoration={markings,mark=at position .5 with {\arrow[black]{stealth}}}}];
\draw  (-1.5,1) -- (-2,-1)[postaction={decorate, decoration={markings,mark=at position .65 with {\arrow[black]{stealth}}}}];
\draw  (0,-1) -- (-1.5,1)[postaction={decorate, decoration={markings,mark=at position .5 with {\arrowreversed[black]{stealth}}}}];
\draw  (1.5,5.5) -- (2,1.5)[postaction={decorate, decoration={markings,mark=at position .5 with {\arrow[black]{stealth}}}}];
\draw  (2,1.5) -- (2,-1)[postaction={decorate, decoration={markings,mark=at position .5 with {\arrow[black]{stealth}}}}];

\node (v17) at (6.5,7.5) {};
\node (v16) at (6.5,-1) {};

\draw  (6.5,-1) -- (6.5,7.5)[postaction={decorate, decoration={markings,mark=at position .5 with {\arrowreversed[black]{stealth}}}}];
\node (v18) at (4.5,7.5) {};
\node (v19) at (4.5,-1) {};
\node (v20) at (4.5,3.25) {};

\draw[fill] (v20) circle [radius=0.15];
\draw  (4.5,-1) -- (4.5,3.25)[postaction={decorate, decoration={markings,mark=at position .5 with {\arrowreversed[black]{stealth}}}}];
\draw  (4.5,3.24) -- (4.5,7.5)[postaction={decorate, decoration={markings,mark=at position .5 with {\arrowreversed[black]{stealth}}}}];

\node at (-6.5,7.5) {};
\node at (-6.5,-1) {};
\node at (8,-1) {};
\node at (8,7.5) {};

\node [above] (v23) at (0.5,10.5) {$s$};
\node [below] (v33) at (0.5,-3.5){$t$};
\draw  (0.5,10.5) -- (-3,7.5);
\draw  (0.5,10.5) --(-2,7.5) ;
\draw  (0.5,10.5) --(-0.5,7.5);
\draw  (0.5,10.5) -- (-4.8,7.5);
\draw  (0.5,10.5) -- (1,7.5);
\draw (0.5,10.5) -- (2.5,7.5);
\draw  (0.5,10.5) --(6.5,7.5) ;
\draw  (0.5,10.5) -- (4.5,7.5);

\draw  (-4.5,-1) -- (0.5,-3.5);
\draw (-2,-1) -- (0.5,-3.5);
\draw (0,-1) -- (0.5,-3.5);
\draw  (2,-1) -- (0.5,-3.5);
\draw (6.5,-1) -- (0.5,-3.5);
\draw  (4.5,-1) -- (0.5,-3.5);

\draw[fill] (0.5,-3.5) circle [radius=0.15];
\draw[fill] (0.5,10.5) circle [radius=0.15];
\draw [loosely dashed] (-6.5,7.5)--(-6.5,-1);
\draw [loosely dashed] (8,7.5)--(8,-1);
\draw [loosely dashed] (-6.5,7.5)--(8,7.5);
\draw [loosely dashed] (-6.5,-1)--(8,-1);

\draw [dashed] plot[smooth, tension=.7] coordinates { (0.5,10.5) (-6.5,8.5) (-9,5.5) (-9,1) (-6.5,-2) (0.5,-3.5)}[postaction={decorate, decoration={markings,mark=at position .5 with {\arrow[black]{stealth}}}}];
\node at (-11,3) {$e$};
\end{tikzpicture}
\caption{}
\label{plan-st}
\end{figure}
 Moreover, a dual graph of a plane $st$ graph is a plane $st$ graph, in particular, a PPG is Poincar$\acute{e}$ dual to an upward pasting scheme for $2$-categories, see Fig \ref{9}.

\begin{figure}[htbp]
\centering
\begin{tikzpicture}[scale=0.5]

\draw [loosely dashed] (-3.5,2.5) rectangle (5.5,-2.5);
\node (v1) at (-3,0) {};
\node (v2) at (-1.5,0) {};
\node (v3) at (1,1.5) {};
\node (v4) at (3,0) {};
\node (v5) at (5,0) {};
\node (v6) at (0,-1.8) {};
\node (v7) at (1.8,-1.8) {};
\draw[fill] (v1) circle [radius=0.11];\draw[fill] (v2) circle [radius=0.11];\draw[fill] (v3) circle [radius=0.11];\draw[fill] (v4) circle [radius=0.11];\draw[fill] (v5) circle [radius=0.11];\draw[fill] (v6) circle [radius=0.11];\draw[fill] (v7) circle [radius=0.11];
\draw  (-3,0) -- (-1.5,0)[postaction={decorate, decoration={markings,mark=at position .65 with {\arrow[black]{stealth}}}}];
\draw (-1.5,0)-- (1,1.5)[postaction={decorate, decoration={markings,mark=at position .5 with {\arrow[black]{stealth}}}}];
\draw  (1,1.5) --  (3,0)[postaction={decorate, decoration={markings,mark=at position .5 with {\arrow[black]{stealth}}}}];
\draw   (3,0) -- (5,0)[postaction={decorate, decoration={markings,mark=at position .5 with {\arrow[black]{stealth}}}}];
\draw  (-1.5,0) -- (0,-1.8)[postaction={decorate, decoration={markings,mark=at position .5 with {\arrow[black]{stealth}}}}];
\draw  (0,-1.8) -- (1.8,-1.8)[postaction={decorate, decoration={markings,mark=at position .5 with {\arrow[black]{stealth}}}}];
\draw  (1.8,-1.8)  --  (3,0)[postaction={decorate, decoration={markings,mark=at position .5 with {\arrow[black]{stealth}}}}];

\node (v8) at (-2.5,2.5) {};
\node (v9) at (-2.5,-2.5) {};
\draw [densely dashed,thick](-2.5,2.5)-- (-2.5,-2.5)[postaction={decorate, decoration={markings,mark=at position .45 with {\arrow[black]{stealth}}}}];
\node (v11) at (1,0) {};

\node (v10) at (-1,2.5) {};
\node (v12) at (2.5,2.5) {};
\node (v13) at (-1.2,-2.5) {};
\node (v14) at (1,-2.5) {};
\node (v15) at (3.4,-2.5) {};
\node (v17) at (4.5,2.5) {};
\node (v16) at (4.5,-2.5) {};
\draw [densely dashed, thick] (-1,2.5) -- (1,0)[postaction={decorate, decoration={markings,mark=at position .5 with {\arrow[black]{stealth}}}}];
\draw  [densely dashed,thick]((2.5,2.5) -- (1,0)[postaction={decorate, decoration={markings,mark=at position .5 with {\arrow[black]{stealth}}}}];
\draw  [densely dashed,thick] ((1,0) -- (-1.2,-2.5)[postaction={decorate, decoration={markings,mark=at position .5 with {\arrow[black]{stealth}}}}];
\draw  [densely dashed,thick](1,0) --  (1,-2.5)[postaction={decorate, decoration={markings,mark=at position .5 with {\arrow[black]{stealth}}}}];
\draw  [densely dashed,thick] (1,0) -- (3.4,-2.5)[postaction={decorate, decoration={markings,mark=at position .45 with {\arrow[black]{stealth}}}}];
\draw   [densely dashed,thick](4.5,2.5)  -- (4.5,-2.5)[postaction={decorate, decoration={markings,mark=at position .45 with {\arrow[black]{stealth}}}}];

\node (v18) at (1,3.6) {};
\node (v19) at (1,-4.2) {};
\draw[ball color=white] (v18) circle [radius=0.11];
\draw[ball color=white] (v19) circle [radius=0.11];
\draw [dotted] (1,3.6) --(-2.5,2.5)[postaction={decorate, decoration={markings,mark=at position .5 with {\arrow[black]{stealth}}}}];
\draw  [dotted](1,3.6)-- (-1,2.5)[postaction={decorate, decoration={markings,mark=at position .5 with {\arrow[black]{stealth}}}}];
\draw  [dotted](1,3.6) -- (2.5,2.5)[postaction={decorate, decoration={markings,mark=at position .5 with {\arrow[black]{stealth}}}}];
\draw  [dotted](1,3.6) -- (4.5,2.5) [postaction={decorate, decoration={markings,mark=at position .5 with {\arrow[black]{stealth}}}}];
\draw  [dotted](-2.5,-2.5) -- (1,-4.2)[postaction={decorate, decoration={markings,mark=at position .5 with {\arrow[black]{stealth}}}}];
\draw  [dotted](-1.2,-2.5) -- (1,-4.2)[postaction={decorate, decoration={markings,mark=at position .5 with {\arrow[black]{stealth}}}}];
\draw  [dotted](1,-2.5)  -- (1,-4.2)[postaction={decorate, decoration={markings,mark=at position .5 with {\arrow[black]{stealth}}}}];
\draw  [dotted](3.4,-2.5) -- (1,-4.2)[postaction={decorate, decoration={markings,mark=at position .5 with {\arrow[black]{stealth}}}}];
\draw [dotted](4.5,-2.5) -- (1,-4.2)[postaction={decorate, decoration={markings,mark=at position .5 with {\arrow[black]{stealth}}}}];
\draw [dotted] plot[smooth, tension=.7] coordinates {(v18) (4.6,3.4) (6.4,2.4) (6.8,0) (6.4,-2.6) (4.6,-3.4) (v19)}[postaction={decorate, decoration={markings,mark=at position .5 with {\arrow[black]{stealth}}}}];
\draw [dotted] plot[smooth, tension=.7] coordinates {(v1) (-3.8,-0.4) (-4.6,-1.8) (-4,-3.6) (-0.8,-5) (3.4,-5) (6,-4.4) (7.6,-3.2) (7.6,-1.2) (6.4,-0.2) (v5)}[postaction={decorate, decoration={markings,mark=at position .5 with {\arrow[black]{stealth}}}}];
\draw[ball color=white] (v18) circle [radius=0.11];
\draw[ball color=white] (v19) circle [radius=0.11];
\draw[ball color=white] (v11) circle [radius=0.11];
\node at (-3,0.6) {$s$};
\node at (5,0.6) {$t$};
\node at (1,4.2) {$s^\ast$};
\node at (0.8,-4.8) {$t^\ast$};
\node at (0.2,-0.2) {$\Downarrow$};
\node at (7.4,1.6) {$e^\ast$};
\node at (3.2,-5.5) {$e$};
\end{tikzpicture}
\caption{}
\label{9}
\end{figure}
\end{rem}

For prime processive graphs, it is easy to see that its planar orders are in bijective with its anchors, which determines  equivalence classes of its upward BP-drawings. As shown in Example \ref{ex1}, we can easily see the following result, which acts as the cornerstone of the relationship between the two frameworks.
\begin{lem}\label{basic}
For any prime processive graph, its planar orders are in bijective with equivalence classes of its upward BP-drawings.
\end{lem}

Here are some notations. The set of internal vertices of an acyclic directed graph $G$ is denoted as $V_{int}(G)$, which, with the order that $v_1<v_2$ if there is a directed path from $v_1$ to $v_2$ (denoted as $v_1\rightarrow v_2$), is a poset. The set of incoming edges and the set of outgoing edges of $v$ are denoted as $I(v)$ and $O(v)$, respectively.

The following proposition  rationalizes our definition of equivalence relation of PPGs (Definition \ref{eq1}), which also shows that the upward property is not an essential requirement in the definition of a PPG.
\begin{prop}\label{Thm6}
Any BP-drawing of a processive graph is equivalent to an upward one.
\end{prop}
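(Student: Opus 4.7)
My strategy is to reduce the result to the classical Battista-Tamassia/Kelly theorem via the $G \mapsto \widehat{G}$ construction recalled in the introduction.

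Given a BP-embedding $\phi$ of $G$ in a plane box $B$, I would first extend $\phi$ to a planar $st$ embedding $\widehat{\phi}$ of $\widehat{G}$ by placing $s$ in the open region above the top side of $B$ and $t$ below the bottom side, and then joining $s$ (resp.\ $t$) to each source (resp.\ sink) of $G$ by pairwise non-crossing arcs. Since the sources and sinks appear in prescribed left-to-right orders along the horizontal boundaries of $B$, such arcs can be drawn as nested curves in the exterior of $B$, and both $s$ and $t$ end up on the boundary of the unbounded face, so $\widehat{\phi}$ is a planar $st$ graph. By the Battista-Tamassia/Kelly theorem, $\widehat{\phi}$ is isotopic through planar $st$ embeddings of $\widehat{G}$ to some upward planar $st$ embedding $\widehat{\psi}$. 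Setting $\psi = \widehat{\psi}^{\circ}$ as in the introduction produces an upward BP-embedding of $G$, the candidate to which $\phi$ will be shown equivalent.

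The remaining task is to promote the planar-$st$ isotopy $H_u$ ($u \in [0,1]$) from $\widehat{\phi}$ to $\widehat{\psi}$ into a BP-isotopy from $\phi$ to $\psi$. For each $u$ I will choose a plane box $B_u$ whose top (resp.\ bottom) horizontal side lies strictly between $H_u(s)$ (resp.\ $H_u(t)$) and all internal-vertex images of $\widehat{G}$, cutting every edge incident to $s$ (resp.\ $t$) transversally in exactly one point. The restriction of $H_u(\widehat{G})$ to $B_u$, with these transversal crossings serving as the induced sources and sinks of $G$, is then a BP-embedding of $G$; composing $H_u$ with an ambient isotopy that straightens $B_u$ to a fixed plane box yields the desired BP-isotopy.

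The main obstacle is ensuring that $u \mapsto B_u$ can be chosen continuously and that each intermediate slice is a genuine BP-embedding, with the induced sources and sinks moving continuously along the horizontal sides of $B_u$ in the correct linear order and without collisions. I plan to handle this by a standard compactness argument on $[0,1]$: near each $u_0$ a single pair of horizontal transversals works uniformly for all $u$ in a neighborhood of $u_0$ (since $H_u(s)$ and $H_u(t)$ move continuously and stay clear of the internal-vertex images), a finite subcover of $[0,1]$ is selected, and the local choices are patched together with a partition of unity to yield a continuous family $\{B_u\}$. Alternatively, one can reparametrize so that $H_u(s)$ and $H_u(t)$ slide along two fixed horizontal reference lines throughout, in which case $B_u$ may be taken constant and the reduction becomes transparent.
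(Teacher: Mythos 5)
Your reduction to the Battista--Tamassia/Kelly theorem breaks down at the ``promotion'' step, and the obstacle you flag at the end is not a technicality but the actual content of the proposition. The intermediate stages $H_u(\widehat{G})$ of the planar isotopy are arbitrary planar $st$ embeddings of $\widehat{G}$, and for such an embedding there need not exist \emph{any} plane box with horizontal sides that contains all internal vertices, meets each edge at $s$ (resp.\ $t$) transversally in exactly one point on its top (resp.\ bottom) side, and meets no other edge: the isotopy may rotate the picture so that $s$ and $t$ lie to the left and right of the internal vertices, and an edge incident to $s$ may oscillate vertically and cross every candidate horizontal line several times. Hence the local existence claim on which your compactness/partition-of-unity argument rests is false already at a single time $u_0$, and the alternative reparametrization that pins $H_u(s)$ and $H_u(t)$ to fixed horizontal lines controls only those two points, not the rest of the drawing. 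What you would actually need is that the map from BP-isotopy classes of BP-embeddings of $G$ to planar-isotopy classes of planar $st$ embeddings of $\widehat{G}$ is injective; in the paper this is only available a posteriori, via Corollary 1.7 together with the Fraysseix--Mendez result, and Corollary 1.7 itself is deduced from Proposition 1.6 --- so invoking it here would be circular.

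The paper's proof is instead direct and inductive on $|V_{int}(G)|$: choose a maximal internal vertex $v$, use planarity of the given BP-embedding to show that $v$ is bimodal and that $O(v)$ is an interval of $O(G)$ for the anchor order, cut the plane box along a curve separating $v$ (with its incident edges) from the rest, obtaining a composition of two BP-embeddings, one of which has $v$ as its unique internal vertex, and apply the induction hypothesis to each piece before recomposing. If you want to keep the spirit of your reduction, you would have to replace the ambient isotopy supplied by Battista--Tamassia/Kelly with an argument at the level of combinatorial data (rotation systems/planar orders), which is essentially what the paper's Theorem 1.5 machinery does.
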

\begin{proof}
Let $G$ be a processive graph with a BP-drawing $\phi$. We use induction on $| V_{int}(G)|$. If $|V_{int}(G)|=1$, $G$ is elementary and the result is obvious.
Assume the theorem holds for $|V_{int}(G)|<n$, we will show that the theorem also holds for $|V_{int}(G)|= n$.

Let $v\in V_{int}(G)$ be a maximal vertex of $G$ and $\curvearrowright$ be the cyclic order on the set $E(v)$ of incident edges of $v$ induced by $\phi$. We will prove by contradiction two claims: $(1)$ $O(v)$ is an interval of $(E(v),\curvearrowright)$; $(2)$ $O(v)$ is an interval of $O(G)$ with respect to the anchor.

$(1)$ Suppose there exist $o_1,o_2\in O(v)$ and $h\in I(v)$ such that  $o_2\curvearrowright h\curvearrowright o_1$. Since $G$ is processive, there is a directed path $P$ from a source $s$ to $v$ and ending with $h$. Then  $\phi(P)\cap(\phi(o_1)\cup\phi(o_2))\neq \emptyset$, see the left of Fig \ref{a1}, which contradicts the planarity of $\phi$.

$(2)$ By maximality of $v$,  $O(v)\subseteq O(G)$. Suppose there exist $o_1,o_2\in O(v)$ and $o_3\in O(G)-O(v)$ such that $o_1<o_3<o_2$ with respect to the anchor. Since $G$ is processive, there is a directed path $\widetilde{P}$ from a source $\widetilde{s}$ to the ending vertex of $o_3$. The planarity of $\phi$ and  maximality of $v$ imply that $\phi(\widetilde{P})\cap(\phi(o_1)\cup\phi(o_2))=\{\phi(v)\}$, see the right of Fig \ref{a1}, which contradicts  $o_3\not\in O(v)$.

\begin{figure}[htbp]
\centering
$$
\begin{matrix}
\begin{matrix}
\begin{tikzpicture}[scale=0.6]

\node (v1) at (0,0) {};
\node (v3) at (0,-6) {};
\node (v2) at (10.5,0) {};
\node (v4) at (10.5,-6) {};
\draw [loosely dashed] (0,0) -- (10.5,0);
\draw  [loosely dashed](0,0)-- (0,-6);
\draw [loosely dashed] (10.5,0) -- (10.5,-6);
\draw  [loosely dashed](0,-6) -- (10.5,-6);

\node [left](v5) at (6.1975,-3.9875) {};
\node[scale=0.7]  at (5.5,-3.8) {$\phi(v)$};

\draw[fill] (6.1975,-3.9875) circle [radius=0.11];
\draw [densely dashed][domain=125:-150] plot ({0.5*cos(\x)+6.1975}, {0.5*sin(\x)-3.9875})[postaction={decorate, decoration={markings,mark=at position .45 with {\arrow[black]{stealth}}}}];

\draw  plot[smooth, tension=.7] coordinates {(6.1975,-3.9875) (5.8187,-4.3498) (5.6,-4.8) (5.4049,-5.2073) (5.2,-5.5) (5.2,-6)}[postaction={decorate, decoration={markings,mark=at position .42 with {\arrow[black]{stealth}}}}];
\node  at (5.2,-4.35) {};
\draw  plot[smooth, tension=.7] coordinates {(6.1975,-3.9875)(6.7881,-4.1871) (6.9639,-4.5737) (7.2099,-4.9955) (7.3856,-5.3821) (7.6317,-5.6985) (7.8777,-6)}[postaction={decorate, decoration={markings,mark=at position .51 with {\arrow[black]{stealth}}}}];
\node [scale=0.7][right] at (7.3,-4.9955) {$\phi(o_2)$};
\draw[fill] (5.2,-6) circle [radius=0.11];
\node [below] at (5.2,-6.2) {};
\draw[fill] (7.8777,-6) circle [radius=0.11];
\node [below] at (7.8777,-6.2) {};

\draw  plot[smooth, tension=.7] coordinates {(7.6317,0) (7.702,-0.3558) (7.702,-0.7776) (7.6317,-1.1291) (7.5965,-1.4454) (7.4208,-1.8672) (7.3505,-2.2187) (7.1747,-2.6053) (7.1396,-2.9217) (6.999,-3.238) (6.8936,-3.3435) (6.4881,-3.6247)(6.3,-3.8) (6.1975,-3.9875)}[postaction={decorate, decoration={markings,mark=at position .51 with {\arrow[black]{stealth}}}}];
\draw  plot[smooth, tension=.7] coordinates {(3.7301,0) (3.7301,-0.2503) (3.7301,-0.7073) (3.8004,-1.34) (3.8004,-1.9375) (3.9059,-2.4999) (3.9762,-2.9217) (4.1168,-3.6247) (4.4331,-4.1168) (4.89,-4.6791) (5.4049,-5.2073) (5.7336,-5.3821) (6.0851,-5.3119) (6.296,-4.9955) (6.296,-4.3979) (6.1975,-3.9875)}[postaction={decorate, decoration={markings,mark=at position .5 with {\arrow[black]{stealth}}}}][postaction={decorate, decoration={markings,mark=at position .9 with {\arrow[black]{stealth}}}}];
\node[scale=0.7] [right] at (6.,-5.5) {$\phi(h)$};

\node [scale=0.7][right] at (3.8004,-1.9375) {$\phi(P)$};
\draw[fill] (7.6317,0) circle [radius=0.11];

\draw[fill] (3.7301,0) circle [radius=0.11];
\node [scale=0.7][above] at (3.7301,0.2) {$\phi(s)$};
\draw (5.4049,-5.2073) circle [radius=0.11];
\node [scale=0.7][left] at (5.3,-5.3) {$\phi(o_1)$};

\end{tikzpicture}
\end{matrix}
&
\begin{matrix}
\begin{tikzpicture}[scale=0.6]

\node (v1) at (0,0) {};
\node (v3) at (0,-6) {};
\node (v2) at (10.5,0) {};
\node (v4) at (10.5,-6) {};
\draw [loosely dashed] (0,0) -- (10.5,0);
\draw  [loosely dashed](0,0)-- (0,-6);
\draw [loosely dashed] (10.5,0) -- (10.5,-6);
\draw  [loosely dashed](0,-6) -- (10.5,-6);

\node [left](v5) at (6.1975,-3.9875) {};
\node [scale=0.7] at (5.5,-3.8) {$\phi(v)$};

\draw[fill] (6.1975,-3.9875) circle [radius=0.11];
\draw [densely dashed][domain=125:-150] plot ({0.5*cos(\x)+6.1975}, {0.5*sin(\x)-3.9875})[postaction={decorate, decoration={markings,mark=at position .45 with {\arrow[black]{stealth}}}}];

\draw  plot[smooth, tension=.7] coordinates {(6.1975,-3.9875) (5.8187,-4.3498) (5.6,-4.8) (5.3945,-5.0649) (5.2,-5.5) (5.2,-6)}[postaction={decorate, decoration={markings,mark=at position .42 with {\arrow[black]{stealth}}}}];
\node  at (5.3,-4.35) {};
\draw  plot[smooth, tension=.7] coordinates {(6.1975,-3.9875)(6.7881,-4.1871) (6.9639,-4.5737) (7.2099,-4.9955) (7.3856,-5.3821) (7.6317,-5.6985) (7.8777,-6)}[postaction={decorate, decoration={markings,mark=at position .51 with {\arrow[black]{stealth}}}}];
\node [scale=0.7][right] at (7.3,-4.9955) {$\phi(o_2)$};
\draw[fill] (5.2,-6) circle [radius=0.11];
\node [below] at (5.2,-6.2) {};
\draw[fill] (7.8777,-6) circle [radius=0.11];
\node [below] at (7.8777,-6.2) {};

\draw  plot[smooth, tension=.7] coordinates {(7.6317,-0.0043) (7.702,-0.3558) (7.702,-0.7776) (7.6317,-1.1291) (7.5965,-1.4454) (7.4208,-1.8672) (7.3505,-2.2187) (7.1747,-2.6053) (7.1396,-2.9217) (6.999,-3.238) (6.8936,-3.3435) (6.4881,-3.6247)(6.3,-3.8) (6.1975,-3.9875)}[postaction={decorate, decoration={markings,mark=at position .51 with {\arrow[black]{stealth}}}}];

\draw[fill] (7.6317,0) circle [radius=0.11];

\draw[fill] (3,0) circle [radius=0.11];
\node [scale=0.7][above] at (3,0.2) {$\phi(\widetilde{s})$};

\node (v6) at (3.4489,-3.5192) {};

\node[left] at (3.4489,-3.5192) {};
\node (v7) at  (5.9,-5.6) {};
\node [scale=0.7][below] at (4.4,-1.6) {$\phi(\widetilde{P})$};
\draw[fill] (6,-6) circle [radius=0.11];

\draw  plot[smooth, tension=.7] coordinates {(3,0) (3.5,-0.5) (4,-1.1) (4.7,-1.6) (5.2,-2.2)  (5.9,-3.3)(6.2,-4) (6.2,-4.5) (6,-5.1) (v7) (6,-6)}[postaction={decorate, decoration={markings,mark=at position .51 with {\arrow[black]{stealth}}}}][postaction={decorate, decoration={markings,mark=at position .91 with {\arrow[black]{stealth}}}}];

\node[scale=0.7] [left] at (5.1,-5.1649) {$\phi(o_1)$};

\node [scale=0.7][right] at(6.,-5.4){$\phi(o_3)$};
\end{tikzpicture}
\end{matrix}
\end{matrix}
$$
\caption{}
\label{a1}
\end{figure}

The two claims enable us to cut $\phi$, along some dotted line in the plane box, into two BP-drawings  with one of them containing $v$ as the unique internal vertex,  see Fig  \ref{a2} for an example. By the induction hypothesis, both of the two BP-drawings are equivalent to upward ones. So as their composition, $\phi$ is equivalent to an upward one.

\begin{figure}[H]
\centering
\begin{tikzpicture}[scale=0.6]
\draw [loosely dashed] (0,0) -- (10.5,0);
\draw  [loosely dashed](0,0)-- (0,-6);
\draw [loosely dashed] (10.5,0) -- (10.5,-6);
\draw  [loosely dashed](0,-6) -- (10.5,-6);

\node (v7) at (6.371,-4.2269) {};

\node (v2) at (2.1882,-3.9105) {};
\node (v3) at (3.6996,-2.0125) {};
\node (v14) at (8.5854,-3.1021) {};
\node (v1) at (1.0986,0) {};
\node (v6) at (3.5239,0) {};
\node (v11) at (6.2304,0) {};
\node (v12) at (7.2849,0) {};
\node (v13) at (8.6908,0) {};
\node (v15) at (9.6047,0) {};
\node (v4) at (1.4501,-6) {};
\node (v5) at (2.7857,-6) {};
\node (v8) at (4.6838,-5.9971) {};
\node (v9) at (5.9843,-6.0323) {};
\node (v10) at (7.0388,-5.962) {};
\node (v16) at (8.5502,-5.9971) {};
\node (v17) at (9.3938,-5.9971) {};

\draw[fill,scale=0.8] (v1) circle [radius=0.11];
\draw[fill,scale=0.8] (v2) circle [radius=0.11];
\draw[fill,scale=0.8] (v3) circle [radius=0.11];
\draw[fill,scale=0.8] (v4) circle [radius=0.11];
\draw[fill,scale=0.8] (v5) circle [radius=0.11];
\draw[fill,scale=0.8] (v6) circle [radius=0.11];
\draw[fill,scale=0.8] (v7) circle [radius=0.11];
\draw[fill,scale=0.8] (v8) circle [radius=0.11];
\draw[fill,scale=0.8] (v9) circle [radius=0.11];
\draw[fill,scale=0.8] (v10) circle [radius=0.11];
\draw[fill,scale=0.8] (v11) circle [radius=0.11];
\draw[fill,scale=0.8] (v12) circle [radius=0.11];
\draw[fill,scale=0.8] (v13) circle [radius=0.11];
\draw[fill,scale=0.8] (v14) circle [radius=0.11];
\draw[fill,scale=0.8] (v15) circle [radius=0.11];
\draw[fill,scale=0.8] (v16) circle [radius=0.11];
\draw[fill,scale=0.8] (v17) circle [radius=0.11];

\draw  plot[smooth, tension=.7] coordinates {(v1) (1.0634,-0.4084) (1.0634,-0.8653) (1.0986,-1.2519) (1.2743,-1.744) (1.3798,-2.2361) (1.4852,-2.8688) (1.8719,-3.3961) (v2)}[postaction={decorate, decoration={markings,mark=at position .6 with {\arrow[black]{stealth}}}}];

\draw  plot[smooth, tension=.7] coordinates {(v3) (2.8912,-2.2713) (2.8912,-2.8337) (2.6803,-3.6421) (v2)}[postaction={decorate, decoration={markings,mark=at position .6 with {\arrow[black]{stealth}}}}];
\draw  plot[smooth, tension=.7] coordinates {(v3) (3.7348,-2.5876) (3.5942,-3.5015) (3.4887,-4.0639) (2.856,-4.2748) (v2)}[postaction={decorate, decoration={markings,mark=at position .6 with {\arrow[black]{stealth}}}}];

\draw  plot[smooth, tension=.7] coordinates {(v2) (1.9773,-4.1693) (1.8016,-4.4154) (1.661,-4.8723) (1.5907,-5.3293) (1.4852,-5.7159) (v4)}[postaction={decorate, decoration={markings,mark=at position .6 with {\arrow[black]{stealth}}}}];
\draw  plot[smooth, tension=.7] coordinates {(v2) (2.2937,-4.3451) (2.4343,-4.7669) (2.61,-5.0832) (2.6803,-5.4699) (v5)}[postaction={decorate, decoration={markings,mark=at position .6 with {\arrow[black]{stealth}}}}];

\draw  plot[smooth, tension=.7] coordinates {(v6) (3.5942,-0.3029) (3.7348,-0.7247) (3.7348,-1.1114) (3.8051,-1.6034) (v3)}[postaction={decorate, decoration={markings,mark=at position .6 with {\arrow[black]{stealth}}}}];
\draw  plot[smooth, tension=.7] coordinates {(v7) (6.4764,-3.8882) (6.4764,-3.3961) (6.1952,-3.2555) (5.7734,-3.8179) (5.5626,-4.3802) (5.3868,-4.8723) (5.2111,-5.3644) (5.0353,-5.6808) (v8)}[postaction={decorate, decoration={markings,mark=at position .6 with {\arrow[black]{stealth}}}}];
\draw  plot[smooth, tension=.7] coordinates {(v7) (6.1952,-4.556) (6.1249,-4.9778) (6.0546,-5.3293) (6.0195,-5.7862) (v9)}[postaction={decorate, decoration={markings,mark=at position .6 with {\arrow[black]{stealth}}}}];
\draw  plot[smooth, tension=.7] coordinates {(v7) (6.617,-4.5208) (6.7576,-4.8723) (6.9334,-5.0129) (7.0388,-5.259) (7.074,-5.5402) (v10)}[postaction={decorate, decoration={markings,mark=at position .6 with {\arrow[black]{stealth}}}}];
\draw  plot[smooth, tension=.7] coordinates {(v11) (6.4061,-0.6544) (6.5467,-1.4628) (6.5467,-2.0604) (6.617,-2.4119) (6.6522,-2.6228) (7.0037,-3.2203) (7.1443,-3.8179) (6.9334,-4.099) (v7)}[postaction={decorate, decoration={markings,mark=at position .6 with {\arrow[black]{stealth}}}}];

\draw  plot[smooth, tension=.7] coordinates {(v12) (7.5661,-0.3029) (7.6012,-0.8301) (7.8824,-1.4628) (7.8472,-2.3064) (8.023,-2.8337) (7.8824,-3.7827) (7.9176,-4.2748) (7.7418,-4.7669) (7.2849,-4.7669) (6.9685,-4.4505) (v7)}[postaction={decorate, decoration={markings,mark=at position .53 with {\arrow[black]{stealth}}}}];
\draw  plot[smooth, tension=.7] coordinates {(v13) (8.7611,-0.549) (8.8314,-1.1465) (8.7611,-2.1658) (v14) }[postaction={decorate, decoration={markings,mark=at position .53 with {\arrow[black]{stealth}}}}];
\draw  plot[smooth, tension=.7] coordinates {(v15) (9.6399,-0.4084) (9.7102,-0.9356) (9.6399,-1.5331) (9.5696,-2.0955) (9.3235,-2.5876) (v14)}[postaction={decorate, decoration={markings,mark=at position .5 with {\arrow[black]{stealth}}}}];
\draw  plot[smooth, tension=.7] coordinates {(v14) (8.6557,-3.607) (8.5502,-4.2748) (8.5151,-4.7669) (8.5151,-5.3644) (v16)}[postaction={decorate, decoration={markings,mark=at position .5 with {\arrow[black]{stealth}}}}];
\draw  plot[smooth, tension=.7] coordinates {(v14) (8.9369,-3.5718) (9.1829,-3.9936) (9.2884,-4.5911) (9.2884,-5.1184) (9.3235,-5.5753)(v17)}[postaction={decorate, decoration={markings,mark=at position .5 with {\arrow[black]{stealth}}}}];
\draw [dotted]  plot[smooth, tension=.7] coordinates {(0.0089,-5.505) (0.8877,-5.505) (1.907,-5.505) (2.9615,-5.4699) (3.9105,-5.4699) (4.6838,-5.1887) (5.1759,-4.2748) (5.5977,-3.4664) (6.1249,-2.9391) (6.6522,-3.15) (6.7576,-3.6773) (7.3552,-5.0832) (7.5661,-5.5402) (8.023,-5.5753) (9.7805,-5.4347) (10.5186,-5.4347)};
\end{tikzpicture}

\caption{}
\label{a2}
\end{figure}
\end{proof}

An equivalence class of BPP-graphs is called an \textbf{BPP-class}. Proposition \ref{Thm6} implies the following result.
\begin{coro}\label{bpp}
 PPG-classes are in bijection with BPP-classes.
\end{coro}

\begin{rem}\label{seven}
Proposition \ref{Thm6} is essentially equivalent to a classical result in graph theory, as pointed out in Remark \ref{three}, that any plane $st$ graph can be deformed through a planar isotopy into an upward one.
\end{rem}

By Lemma \ref{basic}, it is not difficult to see that  prime PPG-classes are in bijective with prime POP-graphs, and therefore that $\mathcal{PRM}$ and $\mathbf{PRM}$ are equivalent (for the definition of a morphism of tensor schemes, see Definition \ref{mor1}).

Together with Theorem \ref{free1}, Theorem \ref{free2}, the equivalence of $\mathcal{PRM}$ and $\mathbf{PRM}$ implies the following result, which indicates the equivalence of the two frameworks.

\begin{thm}\label{main}
$\mathbf{PPG}$ and $\mathcal{POP}$ are equivalent as monoidal categories.
\end{thm}
Theorem \ref{main} also shows that POP-graphs combinatorially characterize PPG-classes and BPP-classes (by Corollary \ref{bpp}), and therefore justify our definitions of equivalence relations of PPGs (Definition \ref{eq1}) and BPP-graphs (Definition \ref{eq2}). For example, the POP-graph in Fig \ref{12} characterizes the equivalence class of the PPG in Fig \ref{1}.

\begin{rem}\label{four}
The combinatorial characterization of a PPG-class in terms of a processive graph and a planar order is essentially equivalent to the characterization of a planar embedding of an $st$-graph in terms of the conjugate order of edge poset (Theorem $14$ in \cite{[FM96]}).
\end{rem}

\begin{rem}\label{five}
Similar to the combinatorial characterization of a PPG-class,
there is a totally combinatorial characterization of a progressive plane graph (without isolated vertices) by the notion of a \textbf{UPO-graph} (abbreviation of  \textbf{upward planarly ordered graph}) \cite{[LY16]}.
\end{rem}

\section{Properties of POP-graphs}

In this section, we show some basic properties of POP-graphs. We fix a POP-graph $(G,\prec)$.

\begin{lem}\label{lem 1}
Let $e_1,e_2,e,e'\in E(G)$.

$(1)$ If $e_1\rightarrow e\leftarrow e_2$ and $e_1\prec e'\prec e_2$, then $e_1\nrightarrow e'$ implies $e'\rightarrow e$.

$(2)$ If $e_1\leftarrow e\rightarrow e_2$ and $e_1\prec e'\prec e_2$, then $e'\nrightarrow e_2$ implies $e\rightarrow e'$.
\end{lem}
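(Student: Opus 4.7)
The plan is to reduce both parts to a direct application of axiom $(P_2)$ by extending the given chain $e_1\prec e'\prec e_2$ with the edge $e$ placed at the appropriate end, which is possible thanks to $(P_1)$.

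For part $(1)$, I would first observe that since $e_1\rightarrow e$ and $e_2\rightarrow e$, axiom $(P_1)$ forces $e_1\prec e$ and $e_2\prec e$. Combined with the hypothesis $e_1\prec e'\prec e_2$, this gives the chain $e_1\prec e'\prec e$ (by transitivity through $e_2$). Now $e_1\rightarrow e$ is known, so $(P_2)$ applied to this triple yields $e_1\rightarrow e'$ or $e'\rightarrow e$. The assumption $e_1\nrightarrow e'$ eliminates the first alternative, leaving $e'\rightarrow e$, as desired.

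For part $(2)$, the argument is symmetric. From $e\rightarrow e_1$ and $e\rightarrow e_2$ together with $(P_1)$, we get $e\prec e_1$ and $e\prec e_2$, so combining with $e_1\prec e'\prec e_2$ yields the chain $e\prec e'\prec e_2$. Since $e\rightarrow e_2$, axiom $(P_2)$ gives $e\rightarrow e'$ or $e'\rightarrow e_2$, and ruling out the second by hypothesis forces $e\rightarrow e'$.

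There is no serious obstacle here: the whole point of the lemma is to expose two particularly useful consequences of $(P_2)$, namely that planar orders behave well with respect to ``fan-in'' and ``fan-out'' configurations. The only tiny care needed is to verify that the chain produced is a strict $\prec$-chain of three elements (so that $(P_2)$ is legitimately applicable); this is immediate from $(P_1)$ and the assumption that $e_1,e_2,e,e'$ are comparable as stated, since $e$ cannot coincide with $e'$ in either case (in $(1)$, $e'\prec e_2\prec e$ forces $e'\neq e$, and in $(2)$, $e\prec e_1\prec e'$ forces $e\neq e'$).
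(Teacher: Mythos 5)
Your proof is correct and follows essentially the same route as the paper: in each part you use $(P_1)$ to append $e$ to the chain $e_1\prec e'\prec e_2$ at the appropriate end and then apply $(P_2)$ to the resulting triple, ruling out one alternative by hypothesis. No issues.
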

The first result can be represented graphically as Fig \ref{16}.
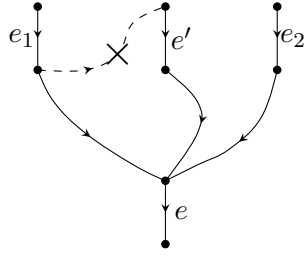
\begin{figure}[htbp]
\centering
\begin{tikzpicture}[scale=0.7]
\node (v1) at (-2.7,2.4) {};
\node (v2) at (-2.7,1.2) {};
\node (v3) at (1.8,2.4) {};
\node (v4) at (1.8,1.2) {};
\node (v5) at (-0.3,-0.9) {};
\node (v6) at (-0.3,-2.1) {};
\draw  (-2.7,2.4) -- (-2.7,1.2)[postaction={decorate, decoration={markings,mark=at position 0.5 with {\arrow[black]{stealth}}}}];
\draw[fill] (-2.7,2.4) circle [radius=0.07];
\draw[fill] (-2.7,1.2) circle [radius=0.07];
\draw  (1.8,2.4) -- (1.8,1.2)[postaction={decorate, decoration={markings,mark=at position 0.5 with {\arrow[black]{stealth}}}}];
\draw[fill] (1.8,2.4) circle [radius=0.07];
\draw[fill] (1.8,1.2) circle [radius=0.07];
\draw  (-0.3,-0.9)-- (-0.3,-2.1)[postaction={decorate, decoration={markings,mark=at position 0.5 with {\arrow[black]{stealth}}}}];
\draw[fill] (-0.3,-0.9) circle [radius=0.07];
\draw[fill] (-0.3,-2.1) circle [radius=0.07];
\draw  plot[smooth, tension=.7] coordinates {(v2) (-2.4,0.6) (-1.8,0) (-0.9,-0.6) (v5)}[postaction={decorate, decoration={markings,mark=at position 0.5 with {\arrow[black]{stealth}}}}];
\draw  plot[smooth, tension=.7] coordinates {(v4) (1.5,0.3)  (0.9,-0.3) (0.3,-0.6) (-0.3,-0.9)}[postaction={decorate, decoration={markings,mark=at position 0.5 with {\arrow[black]{stealth}}}}];
\node (v7) at (-0.3,2.4) {};
\node (v8) at (-0.3,1.2) {};
\draw  (-0.3,2.4) -- (-0.3,1.2)[postaction={decorate, decoration={markings,mark=at position 0.5 with {\arrow[black]{stealth}}}}];
\draw[fill] (-0.3,2.4) circle [radius=0.07];
\draw[fill] (-0.3,1.2) circle [radius=0.07];
\draw  plot[smooth, tension=.7] coordinates {(v8) (0.3,0.6) (0.3,0) (-0.3,-0.9)}[postaction={decorate, decoration={markings,mark=at position 0.5 with {\arrow[black]{stealth}}}}];
\node at (-3,1.8) {$e_1$};
\node at (0,1.8) {$e'$};
\node at (2.1,1.8) {$e_2$};
\node at (0,-1.5) {$e$};
\draw [dashed] plot[smooth, tension=.7] coordinates {(v2) (-1.8,1.2) (-1.2,1.5) (-0.9,2.1) (v7)}[postaction={decorate, decoration={markings,mark=at position 0.35 with {\arrow[black]{stealth}}}}];
\node [scale=1.5]at (-1.2,1.5) {$\times$};
\end{tikzpicture}
\caption{Under the conditions in $(1)$, $e_1\nrightarrow e'$ implies $e'\rightarrow e$.}
\label{16}
\end{figure}

\begin{proof}
We only prove $(1)$, and the proof for $(2)$ is similar. By $(P1)$, $e_2\rightarrow e$ implies that $e_2\prec e$. So $e_1\prec e'\prec e_2\prec e$, then by $(P2)$, $e_1\rightarrow e$ implies that either $e_1\to e'$ or $e'\rightarrow e$.
\end{proof}

Recall that the sets of input edges and output edges of $G$ are denoted as $I(G)$ and $O(G)$, respectively.
For any $e\in E(G)$, we introduce four notations:
$$i^-(e)=min\{i_k\in I(G)| i_k\rightarrow e\},$$
$$i^+(e)=max\{i_k\in I(G)| i_k\rightarrow e\},$$
$$o^-(e)=min\{o_k\in O(G)| e\rightarrow o_k\},$$
$$o^+(e)=max\{o_k\in O(G)| e\rightarrow o_k\}.$$

\begin{prop}\label{lem 2}

$(1)$ For any  $i\in I(G)$ and  $e\in E(G)-I(G)$, we have
 $i^-(e)\preceq i\preceq i^+(e)\Longleftrightarrow i\rightarrow e.$

 $(2)$ For any $o\in O(G)$ and $e\in E(G)-O(G)$, we have
 $o^-(e)\preceq o\preceq o^+(e)\Longleftrightarrow e\rightarrow o.$
\end{prop}
We can get a graphical representation of $(1)$ by replacing the labels $e_1$, $e_2$ and $e'$ in Fig \ref{16} with $i^-(e)$, $i^+(e)$ and $i$, respectively.

\begin{proof}We only prove $(1)$. The proof of $(2)$ is similar and we omit it here.
The direction ($\Longleftarrow$) is obvious. Now we show the direction
($\Longrightarrow$). First, $i\in I(G)$ implies that $i^-(e)\nrightarrow i$. If $i=i^-(e)$ or $i^+(e)$, then $i\rightarrow e$. Otherwise, $i^-(e)\prec i\prec i^+(e)$, then $i\rightarrow e$ follows from Lemma \ref{lem 1} $(1)$.
\end{proof}

The following result shows a characterization of basic intervals.
\begin{thm}\label{lem 3}

$(1)$ Let $e\in E(G)-I(G)$. Then
$e\in P_k\Longleftrightarrow i^+(e)=i_k,\ \ \  (1\leq k\leq m).$

$(2)$ Let $e\in E(G)-O(G)$.  Then
$e\in Q_k\Longleftrightarrow o^-(e)=o_k,\ \ \  (1\leq k\leq n).$
\end{thm}

\begin{proof}
$(1)$ ($\Longleftarrow$). Assume $i^+(e)=i_k$, then by $(P1)$, $i_k=i^+(e)\prec e$. We have two cases. If $k=m$, then $e\in (i_m, +\infty]$ and the proof is completed.

Now we assume that $1\leq k\leq n-1$. It suffices to show $e\prec i_{k+1}$. Otherwise, $i_{k+1}\prec e$, and hence $i_k\prec i_{k+1} \prec e$. Then by ($P2$),  $i_k\rightarrow e$ implies that either $i_k\rightarrow i_{k+1}$ or $i_{k+1}\rightarrow e$ (see Fig \ref{17}), both will lead to a contradiction. Thus we must have $e\prec i_{k+1}$, and hence $e\in (i_k, i_{k+1})$.

\begin{figure}[H]
\centering
$$
\begin{matrix}
\begin{matrix}
\begin{tikzpicture}

\node (v1) at (-1.2,2.2) {};
\draw[fill] (-1.2,2.2) circle [radius=0.07];
\node (v2) at (-1.2,1.4) {};
\draw[fill]  (-1.2,1.4) circle [radius=0.07];
\node (v3) at (-1.8,0) {};
\draw[fill] (-1.8,0) circle [radius=0.07];
\node (v4) at (-1.8,-0.8) {};
\draw[fill] (-1.8,-0.8) circle [radius=0.07];
\draw  plot[smooth, tension=.7] coordinates {(-1.2,2.2)(-1.2,1.4) (-1.3,1.1) (-1.5,0.7) (-1.7,0.4) (-1.8,0) (-1.8,-0.8)}[postaction={decorate, decoration={markings,mark=at position 0.5 with {\arrow[black]{stealth}}}}];
\draw  (-1.2,2.2)  -- (-1.2,1.4)[postaction={decorate, decoration={markings,mark=at position 0.5 with {\arrow[black]{stealth}}}}];
\draw  (-1.8,0)  -- (-1.8,-0.8)[postaction={decorate, decoration={markings,mark=at position 0.5 with {\arrow[black]{stealth}}}}];
\node (v5) at (0.1,2.2) {};
\draw[fill] (0.1,2.2) circle [radius=0.07];
\node (v6) at (0.1,1.4) {};
\draw[fill] (0.1,1.4) circle [radius=0.07];
\draw  (0.1,2.2)  -- (0.1,1.4)[postaction={decorate, decoration={markings,mark=at position 0.5 with {\arrow[black]{stealth}}}}];
\draw [dashed] plot[smooth, tension=.7] coordinates {(v2) (-1,1.2) (-0.6,1.2) (-0.4,1.6) (-0.3,2) (v5)}[postaction={decorate, decoration={markings,mark=at position 0.45 with {\arrow[black]{stealth}}}}];
\node at (-1.5,1.8) {$i_k$};
\node at (-2.1,-0.5) {$e$};
\node at (0.6,1.8) {$i_{k+1}$};
\node [scale=1.5]at (-0.4,1.6) {$\times$};
\node (v7) at (-2.8,2.2) {};
\node (v8) at (1.8,2.2) {};
\draw  [loosely dashed](v7) -- (v8);
\end{tikzpicture}
\end{matrix}&&&
\begin{matrix}
\begin{tikzpicture}

\node (v1) at (-1.2,2.2) {};
\draw[fill] (-1.2,2.2) circle [radius=0.07];
\node (v2) at (-1.2,1.4) {};
\draw[fill]  (-1.2,1.4) circle [radius=0.07];
\node (v3) at (-1.8,0) {};
\draw[fill] (-1.8,0) circle [radius=0.07];
\node (v4) at (-1.8,-0.8) {};
\draw[fill] (-1.8,-0.8) circle [radius=0.07];
\draw  plot[smooth, tension=.7] coordinates {(-1.2,2.2)(-1.2,1.4) (-1.3,1.1) (-1.5,0.7) (-1.7,0.4) (-1.8,0) (-1.8,-0.8)}[postaction={decorate, decoration={markings,mark=at position 0.5 with {\arrow[black]{stealth}}}}];
\draw  (-1.2,2.2)  -- (-1.2,1.4)[postaction={decorate, decoration={markings,mark=at position 0.5 with {\arrow[black]{stealth}}}}];
\draw  (-1.8,0)  -- (-1.8,-0.8)[postaction={decorate, decoration={markings,mark=at position 0.5 with {\arrow[black]{stealth}}}}];
\node (v5) at (0.1,2.2) {};
\draw[fill] (0.1,2.2) circle [radius=0.07];
\node (v6) at (0.1,1.4) {};
\draw[fill] (0.1,1.4) circle [radius=0.07];
\draw  (0.1,2.2)  -- (0.1,1.4)[postaction={decorate, decoration={markings,mark=at position 0.5 with {\arrow[black]{stealth}}}}];
\node at (-1.5,1.8) {$i_k$};
\node at (-2.1,-0.5) {$e$};
\node at (0.6,1.8) {$i_{k+1}$};
\draw [dashed] plot[smooth, tension=.7] coordinates {(v6) (-0.2,1.1) (-0.6,0.9) (-0.9,0.8) (-1.2,0.6) (-1.5,0.3) (v3)}[postaction={decorate, decoration={markings,mark=at position 0.5 with {\arrow[black]{stealth}}}}];
\node [scale=1.5]at (-0.6,0.9) {$\times$};
\node (v7) at (-2.8,2.2) {};
\node (v8) at (1.8,2.2) {};
\draw  [loosely dashed](v7) -- (v8);
\end{tikzpicture}
\end{matrix}
\end{matrix}
$$
\caption{}
\label{17}
\end{figure}
($\Longrightarrow$). We just use the fact that $P_i\cap P_j=\emptyset$ for any $i\neq j$. Assume $e\in P_k$ and $i^+(e)= i_l$ for some $1\le k,l\le m$. By the proof of ($\Longleftarrow$) we know that $e\in P_l$. It forces that $k=l$, which completes the proof.

$(2)$ The proof is similar and we omit it here.
\end{proof}

The following result shows that a maximal internal vertex can be cut down from a POP-graph, just as that in Proposition \ref{Thm6}.
\begin{prop}\label{lem 4}
Let $v\in V_{int}(G)$ be a maximal vertex, that is, there is no vertex $v'\in V_{int}(G)$ such that $v\rightarrow v'$. Then

$(1)$ $O(v)$ is a subset of $O(G)$ and is an interval of $(E(G), \prec)$. In particular, $O(v)$ is an interval of $(O(G),\prec)$.

$(2)$ for any $h\in I(v)$ and $o\in O(G)-O(v)$, we have
$o\prec h \Longleftrightarrow o\prec min\ O(v)$, and $h\prec o \Longleftrightarrow max\ O(v)\prec o.$
\end{prop}

Fig \ref{18} shows an example of this proposition.
\begin{figure}[htbp]
\centering
\begin{tikzpicture}[scale=0.4]

\node (v2) at (-4,3) {};
\node (v1) at (-1.5,5.5) {};
\node (v7) at (-1.5,1) {};
\node [right]at (-1.5,1) {$v$};
\node (v9) at (1.5,5.5) {};
\node (v14) at (2,1.5) {};
\node [scale=0.7](v3) at (-3,7.5) {$2$};
\node [scale=0.7](v4) at (-2,7.5) {$3$};
\node [scale=0.7](v5) at (-0.5,7.5) {$4$};
\node[scale=0.7] (v6) at (-4.8,7.4) {$1$};
\node [scale=0.7](v11) at (-4.5,-1) {$7$};
\node [scale=0.7](v12) at (-2,-1) {$13$};
\node [scale=0.7](v13) at (0,-1) {$14$};
\node [scale=0.7](v15) at (2,-1) {$16$};
\node [scale=0.7](v8) at (1,7.5) {$10$};
\node [scale=0.7](v10) at (2.5,7.5) {$11$};
\node [scale=0.7] at (-2.5,3.5) {$6$};
\node[scale=0.7]  at (-3,5.2) {$5$};
\node [scale=0.7] at (-1.2,3.3) {$9$};
\node [scale=0.7] at (0.5,3.25) {$12$};
\node [scale=0.7] at (2.2,3.7) {$15$};
\node [scale=0.7] at (-3,1.7) {$8$};
\draw[fill] (-4,3) circle [radius=0.11];
\draw[fill] (v1) circle [radius=0.11];
\draw[fill] (v7) circle [radius=0.11];
\draw[fill] (v9) circle [radius=0.11];
\draw[fill] (v14) circle [radius=0.11];
\draw  plot[smooth, tension=1] coordinates {(v1) (-2.5,5)  (-3.5,4) (v2)}[postaction={decorate, decoration={markings,mark=at position .5 with {\arrow[black]{stealth}}}}];
\draw  plot[smooth, tension=1] coordinates {(v1) (-2,4.5)  (-3,3.5) (v2)}[postaction={decorate, decoration={markings,mark=at position .5 with {\arrow[black]{stealth}}}}];

\draw  (v3) -- (-1.5,5.5)[postaction={decorate, decoration={markings,mark=at position .5 with {\arrow[black]{stealth}}}}];
\draw  (v4) -- (-1.5,5.5)[postaction={decorate, decoration={markings,mark=at position .5 with {\arrow[black]{stealth}}}}];
\draw  (v5) -- (-1.5,5.5)[postaction={decorate, decoration={markings,mark=at position .5 with {\arrow[black]{stealth}}}}];

\draw  (v6) -- (-4,3)[postaction={decorate, decoration={markings,mark=at position .5 with {\arrow[black]{stealth}}}}];
\draw  (-1.5,5.5) -- (-1.5,1)[postaction={decorate, decoration={markings,mark=at position .5 with {\arrow[black]{stealth}}}}];
\draw  (-4,3) -- (-1.5,1)[postaction={decorate, decoration={markings,mark=at position .5 with {\arrow[black]{stealth}}}}];

\draw  (v8)--(1.5,5.5)[postaction={decorate, decoration={markings,mark=at position .5 with {\arrow[black]{stealth}}}}];
\draw  (v10) -- (1.5,5.5)[postaction={decorate, decoration={markings,mark=at position .5 with {\arrow[black]{stealth}}}}];
\draw  (1.5,5.5) -- (-1.5,1)[postaction={decorate, decoration={markings,mark=at position .5 with {\arrow[black]{stealth}}}}];
\draw  (-4,3) -- (v11)[postaction={decorate, decoration={markings,mark=at position .5 with {\arrow[black]{stealth}}}}];
\draw  (-1.5,1) -- (v12)[postaction={decorate, decoration={markings,mark=at position .65 with {\arrow[black]{stealth}}}}];
\draw  (v13) -- (-1.5,1)[postaction={decorate, decoration={markings,mark=at position .5 with {\arrowreversed[black]{stealth}}}}];
\draw  (1.5,5.5) -- (2,1.5)[postaction={decorate, decoration={markings,mark=at position .5 with {\arrow[black]{stealth}}}}];
\draw  (2,1.5) -- (v15)[postaction={decorate, decoration={markings,mark=at position .5 with {\arrow[black]{stealth}}}}];

\node (v20) at (4.5,3.25) {};

\node[scale=0.7] (v21) at (4.5,7.5) {$17$};

\node [scale=0.7](v22) at (4.5,-1) {$18$};
\draw[fill] (v20) circle [radius=0.11];
\draw  (v22) -- (4.5,3.25)[postaction={decorate, decoration={markings,mark=at position .5 with {\arrowreversed[black]{stealth}}}}];
\draw  (4.5,3.24) -- (v21)[postaction={decorate, decoration={markings,mark=at position .5 with {\arrowreversed[black]{stealth}}}}];

\node [scale=0.7](v23) at (6.5,7.5) {$19$};
\node (v24) at (6.5,-1) {};
\draw  (v23) -- (v24)[postaction={decorate, decoration={markings,mark=at position .5 with {\arrow[black]{stealth}}}}];

\draw [dotted]   (-6.2,0)-- (-2.8,0);
\draw [dotted]  (-2.8,0) edge (-2.8,1.6);
\draw [dotted]  (-2.8,1.6) edge (0.2,1.6);
\draw [dotted]  (0.2,1.6) edge (0.2,0.2);
\draw [dotted]  (0.2,0.2) edge (8,0.2);
\end{tikzpicture}

\caption{}
\label{18}
\end{figure}

\begin{proof}
$(1)$ Since $v$ is maximal, then $O(v)\subseteq O(G)$. We prove by contradiction that $O(v)$ is an interval of $(E(G),\prec)$.
Suppose there exist $e_1,e_2\in O(v)$ and an edge $e\in E(G)-O(v)$ such that $e_1\prec e \prec e_2$. Since $G$ is processive, $I(v)$ is nonempty.  Take $\widetilde{e}\in I(v)$, then  $\widetilde{e}\prec e_1\prec e\prec e_2$ and $\widetilde{e}\rightarrow e_2$. By $(P2)$, $\widetilde{e}\rightarrow e$ (the left of Fig \ref{19}) or $e\rightarrow e_2$ (the right of Fig \ref{19}). If $\widetilde{e}\rightarrow e$, then the maximality of $v$ implies that $e\in O(v)$, which contradicts $e\in E(G)-O(v)$. If $e\rightarrow e_2$, then $e\rightarrow e_1$. Then by $(P1)$,  $e\prec e_1$, which contradicts $e_1\prec e$.

$(2)$ Notice that $o^-(h)=\min  O(v)$ and $o^+(h)=\max  O(v)$. Moreover, the maximality of $v$ implies that $h\nrightarrow o$. Then the lemma is a direct consequence of Proposition \ref{lem 2} $(2)$.

\begin{figure}[htbp]
\centering
$$
\begin{matrix}
\begin{matrix}
\begin{tikzpicture}[scale=1.3]
\node (v1) at (-1.7,-2.7) {};
\node (v2) at (1.7,-2.7) {};
\draw  [loosely dashed](v1)--(v2);
\node (v4) at (-0.6,-1.3) {};
\draw[fill] (v4) circle [radius=0.04];
\node (v5) at (-1.1,-2.7) {};

\node (v6) at (0.7,-2.7) {};

\node (v3) at (-1.1,-0.4) {};
\draw[fill] (v3) circle [radius=0.04];
\draw  (-1.1,-0.4) -- (-0.6,-1.3)[postaction={decorate, decoration={markings,mark=at position .5 with {\arrow[black]{stealth}}}}];
\draw  (-0.6,-1.3) -- (-1.1,-2.7)[postaction={decorate, decoration={markings,mark=at position .5 with {\arrow[black]{stealth}}}}];
\draw  (-0.6,-1.3) node (v9) {} -- (0.7,-2.7)[postaction={decorate, decoration={markings,mark=at position .5 with {\arrow[black]{stealth}}}}];
\node (v7) at (-0.5,-2.1) {};
\draw[fill] (v7) circle [radius=0.04];
\node (v8) at (-0.6,-2.5) {};
\draw[fill] (v8) circle [radius=0.04];
\draw  (-0.5,-2.1) node (v10) {} -- (-0.6,-2.5)[postaction={decorate, decoration={markings,mark=at position .5 with {\arrow[black]{stealth}}}}];

\draw [dashed] plot[smooth, tension=.7] coordinates {(v9) (-0.55,-1.55) (-0.35,-1.8)(-0.35,-2) (v10)}[postaction={decorate, decoration={markings,mark=at position .35 with {\arrow[black]{stealth}}}}];
\node [scale=1.5]at (-0.33,-1.9) {$\times$};
\node at (-0.7,-0.7) {$\widetilde{e}$};
\node at (-1.1,-2.1) {$e_1$};
\node at (-0.7,-2.3) {$e$};
\node at (0.4,-2.2) {$e_2$};
\node at (-0.3,-1.1) {$v$};

\draw[fill] (-1.1,-2.7)circle [radius=0.04];
\draw[fill] (0.7,-2.7)circle [radius=0.04];
\end{tikzpicture}
\end{matrix}&&
\begin{matrix}
\begin{tikzpicture}[scale=1.3]
\node (v1) at (-1.7,-2.7) {};
\node (v2) at (1.7,-2.7) {};
\draw  [loosely dashed](v1)--(v2);
\node (v4) at (-0.6,-1.3) {};
\draw[fill] (v4) circle [radius=0.04];
\node (v5) at (-1.1,-2.7) {};

\node (v6) at (0.7,-2.7) {};

\node (v3) at (-1.1,-0.4) {};
\draw[fill] (v3) circle [radius=0.04];
\draw  (-1.1,-0.4) -- (-0.6,-1.3)[postaction={decorate, decoration={markings,mark=at position .5 with {\arrow[black]{stealth}}}}];
\draw  (-0.6,-1.3) -- (-1.1,-2.7)[postaction={decorate, decoration={markings,mark=at position .5 with {\arrow[black]{stealth}}}}];
\draw  (-0.6,-1.3) node (v9) {} -- (0.7,-2.7)[postaction={decorate, decoration={markings,mark=at position .5 with {\arrow[black]{stealth}}}}];
\node (v7) at (-0.5,-2.1) {};
\draw[fill] (v7) circle [radius=0.04];
\node (v8) at (-0.6,-2.5) {};
\draw[fill] (v8) circle [radius=0.04];
\draw  (-0.5,-2.1) node (v10) {} -- (-0.6,-2.5)[postaction={decorate, decoration={markings,mark=at position .5 with {\arrow[black]{stealth}}}}];
\draw [dashed] plot[smooth, tension=.7] coordinates { (-0.6,-2.5) (-0.3,-2.4) (-0.2,-2.1) (-0.3,-1.9) (-0.4,-1.7) (-0.6,-1.3)}[postaction={decorate, decoration={markings,mark=at position .65 with {\arrow[black]{stealth}}}}];

\node at (-0.7,-0.7) {$\widetilde{e}$};
\node at (-1.1,-2.1) {$e_1$};
\node at (-0.7,-2.3) {$e$};
\node at (0.4,-2.2) {$e_2$};
\node at (-0.3,-1.1) {$v$};
\node[scale=1.5] at (-0.2,-2.1) {$\times$};
\draw[fill] (-1.1,-2.7)circle [radius=0.04];
\draw[fill] (0.7,-2.7)circle [radius=0.04];
\end{tikzpicture}
\end{matrix}
\end{matrix}
$$
\caption{}
\label{19}
\end{figure}

\end{proof}

Similarly, we have the following result.

\begin{prop}\label{lem 6}
Let $v\in V_{int}(G)$ be a minimal vertex, that is, there is no vertex $v'\in V_{int}(G)$ such that $v'\rightarrow v$. Then

$(1)$ $I(v)$ is a subset of $I(G)$ and is an interval of $(E(G), \prec)$. In particular, $I(v)$ is an interval of $(I(G),\prec)$.

$(2)$ for any  $h\in O(v)$ and $i\in I(G)-I(v)$, we have
$i\prec h \Longleftrightarrow i\prec min\ I(v)$, and $h\prec o \Longleftrightarrow max\ I(v)\prec i.$
\end{prop}

\section{The proof of Theorem \ref{crux}}
In this section, we will give a proof of a key result, Theorem \ref{crux}, which justifies the definition of composition of POP-graphs (Definition \ref{com}).

Given two POP-graphs $(G_1,\prec_1)$ and $(G_2,\prec_2)$, as before we set $G=G_2\circ G_1$, which is obviously processive, and assume $\prec=\prec_2\circ\prec_1=Q_1\triangleleft\{\overline{e_1}\}\triangleleft P_1\triangleleft...\triangleleft Q_k\triangleleft\{\overline{e_k}\}\triangleleft P_k\triangleleft...\triangleleft Q_n\triangleleft\{\overline{e_n}\}\triangleleft P_n.$ To prove Theorem \ref{crux}, we only need to show that $\prec_2\circ\prec_1$ is a planar order of $G=G_2\circ G_1$.

From definition, $(P1)$ is clear for $\prec$. Since $\prec$ is a linear order, it is easy to see that $(P2)$ is equivalent to $(\widetilde{P2})$ that for any $e_1,e_2,e_3\in E(G)$, if $t(e_1)=s(e_3)$ and $e_1\prec e_2\prec e_3$, then $e_1\rightarrow e_2$ or $e_2\rightarrow e_3$, where $t(e_1)=s(e_3)$ means that $e_1e_3$ is a directed path of length two.

Now we show that $\prec$ satisfies $(\widetilde{P2})$. Assume $e_1,e_2,e_3\in E(G)$ with $t(e_1)=s(e_3)$ and  $e_1\prec e_2 \prec e_3$, we want to show case by case that either $e_1\rightarrow e_2$ or $e_2\rightarrow e_3$. Since $e_1e_3$ is a length two directed path, then by the construction of $G_2\circ G_1$ we have either $e_1,e_3\in \Big(E(G_1)-\{o_1,\cdots,o_n\}\Big)\sqcup \{\overline{e_1},\cdots,\overline{e_n}\}\backsimeq E(G_1)$ or $e_1,e_3\in \Big(E(G_2)-\{i_1,\cdots,i_n\}\Big)\sqcup \{\overline{e_1},\cdots,\overline{e_n}\}\backsimeq  E(G_2)$, where, for simplicity, we freely identify $\overline{e_k}$ with $o_k$ or(and) $i_k$ for each $k$.

\noindent\textbf{Case 1}: $e_1,e_3\in E(G_1)$. There are two subcases.

   \textbf{Subcase 1.1}: $e_2\in E(G_1)$. By $(P2)$ of $\prec_1$, we have either $e_1\rightarrow e_2$ or $e_2\rightarrow e_3$ in $G_1$ and hence either $e_1\rightarrow e_2$ or $e_2\rightarrow e_3$ in $G$.

   \textbf{Subcase 1.2}: $e_2\not\in E(G_1)$, that is, $e_2\in E(G_2)-\{i_1,\cdots,i_n\}$. Assume that $o^-(e_1)=o_\mu$, $o^+(e_1)=o_\nu$ in $G_1$ and $i^+(e_2)=i_\lambda$ in $G_2$ for some $\mu,\nu,\lambda\in\{1,\cdots, n\}$ (see Fig \ref{20}), we want to show that $\mu\leq \lambda< \nu$, that is, $o_\mu\preceq_1 o_\lambda\prec_1 o_\nu$ in $G_1$.

\begin{figure}[H]
\centering
\begin{tikzpicture}[scale=0.7]
\node (v1) at (-0.6,2.4) {};
\draw[fill] (-0.6,2.4) circle [radius=0.07];
\node (v2) at (-0.6,1.2) {};
\draw[fill] (-0.6,1.2) circle [radius=0.07];
\node (v3) at (-0.6,-0.2) {};
\draw[fill] (-0.6,-0.2) circle [radius=0.07];
\draw (-0.6,2.4) -- (-0.6,1.2)[postaction={decorate, decoration={markings,mark=at position .5 with {\arrow[black]{stealth}}}}];
\draw (-0.6,1.2) -- (-0.6,-0.2)[postaction={decorate, decoration={markings,mark=at position .5 with {\arrow[black]{stealth}}}}];
\node (v4) at (-5,-2.6) {};
\node (v5) at (4.5,-2.6) {};
\node (v6) at (-5,-4.2) {};
\node (v7) at (4.5,-4.2) {};
\draw  [dotted](v4) edge (v5);
\draw [dotted] (v6) edge (v7);
\draw  plot[smooth, tension=.7] coordinates {(v2) (-1.8,0.8) (-2.8,0.2) (-3.2,-1) (-3.2,-2) (-3.2,-2.6) (-3.2,-4.2)}[postaction={decorate, decoration={markings,mark=at position .4 with {\arrow[black]{stealth}}}}][postaction={decorate, decoration={markings,mark=at position 0.87 with {\arrow[black]{stealth}}}}];
\draw  plot[smooth, tension=.7] coordinates {(-0.6,1.2) (1.4,0.4) (2.4,-0.6) (2.8,-1.6) (2.8,-2.6)(2.8,-4.2)}[postaction={decorate, decoration={markings,mark=at position .3 with {\arrow[black]{stealth}}}}][postaction={decorate, decoration={markings,mark=at position 0.88 with {\arrow[black]{stealth}}}}];

\node (v9) at (2.8,-4.2) {};
\node (v8) at (2.8,-2.6) {};

\node at (-3.2,-2) {};
\draw[fill] (-3.2,-2) circle [radius=0.07];
\node at (2.8,-1.6) {};
\draw[fill] (2.8,-1.6) circle [radius=0.07];
\node (v10) at (-2.2,-6.2) {};
\draw[fill] (-2.2,-6.2) circle [radius=0.07];
\node (v11) at (-2.2,-7.6) {};
\draw[fill] (-2.2,-7.6) circle [radius=0.07];
\draw  (-2.2,-6.2)-- (-2.2,-7.6)[postaction={decorate, decoration={markings,mark=at position 0.5 with {\arrow[black]{stealth}}}}];
\draw  plot[smooth, tension=.7] coordinates {(v3) (0.4,-0.4) (0.8,-1.2) (1,-2) (1,-2.6)(1,-4.2)}[postaction={decorate, decoration={markings,mark=at position .3 with {\arrow[black]{stealth}}}}][postaction={decorate, decoration={markings,mark=at position 0.828 with {\arrow[black]{stealth}}}}];
\node at (1,-2) {};
\draw[fill] (1,-2) circle [radius=0.07];
\node (v12) at (1,-2.6) {};
\node (v13) at (1,-4.2) {};

\draw  plot[smooth, tension=.7] coordinates {(v10) (-1.6,-6) (-1,-5.6) (-0.6,-5.2) (-0.4,-4.6) (-0.4,-4.2)(-0.4,-2.6)(-0.4,-1.8)}[postaction={decorate, decoration={markings,mark=at position .3 with {\arrowreversed[black]{stealth}}}}][postaction={decorate, decoration={markings,mark=at position 0.75 with {\arrowreversed[black]{stealth}}}}];
\draw[fill] (-0.4,-1.8) circle [radius=0.07];
\node at (-0.4,-4.6) {};
\draw[fill] (-0.4,-4.6) circle [radius=0.07];
\node (v14) at (-0.4,-2.6) {};
\node (v15) at (-0.4,-4.2) {};

\node at (-0.2,1.8) {$e_1$};
\node at (-0.2,0.4) {$e_3$};
\node at (-2.8,-3.4) {$\overline{e_\mu}$};
\node at (-0.8,-3.4) {$\overline{e_\lambda}$};
\node at (1.8,-3.0) {$o^+(e_3)$};
\node at (3.2,-3.4) {$\overline{e_\nu}$};
\node at (-2.6,-6.8) {$e_2$};
\node at (-3.6,-2.4) {$o_\mu$};
\node at (0,-4) {$i_\lambda$};
\node at (3.2,-2.4) {$o_\nu$};
\node at (3,1) {$G_1$};
\node at (3,-6.2) {$G_2$};
\node at (-0,-2.4) {$o_\lambda$};
\draw [dashed] plot[smooth, tension=.7] coordinates {(-0.6,1.2) (-1.2,0.6) (-1.6,0) (-1.8,-0.8) (-1.4,-1.6) (-0.4,-1.8)}[postaction={decorate, decoration={markings,mark=at position 0.5 with {\arrow[black]{stealth}}}}];
\end{tikzpicture}
\caption{}
\label{20}
\end{figure}

On one hand, notice that $e_1\in E(G_1)-\{o_1,\cdots,o_n\}$ (by $e_3\in E(G_1)$ and $e_1\rightarrow e_3$) and $e_2\in E(G_2)-\{i_1,\cdots,i_n\}$, so by Theorem \ref{lem 3}, $e_1\in Q_\mu$ and $e_2\in P_\lambda$. By the shuffle construction of $\prec$, $e_1\prec e_2$ implies that $\mu\leq\lambda$.

   On the other hand,  $e_2\prec e_3$ implies that $\lambda<\nu$. In fact, $\overline{e_\lambda}\prec e_2$ (by $i_\lambda\rightarrow e_2$ in $G_2$ or equivalently $\overline{e_\lambda}\rightarrow e_2$ in $G$ and $(P1)$) and $e_2\prec e_3$ imply that $\overline{e_\lambda}\prec e_3$ in $G$. Since $(G_1,\prec_1)$ is a POP-graph, by Proposition \ref{lem 2} $(2)$, $e_1\rightarrow o^+(e_3)$ (by $e_1\rightarrow e_3$ and $e_3\rightarrow o^+(e_3)$) implies that $o^+(e_3)\preceq_1 o_\nu$ in $G_1$. Note that $e_3\preceq_1 o^+(e_3)$ (by $e_3\rightarrow o^+(e_3)$ and $(P1)$), so $e_3\preceq_1 o_\nu$ in $G_1$ and equivalently $e_3\prec \overline{e_\nu}$ in $G$. Finally we get that  $\overline{e_\lambda}\prec e_3\preceq  \overline{e_\nu}$ in $G$, from which $\lambda<\nu$ follows.

   By the facts that $(G_1,\prec_1)$ is a POP-graph and Proposition \ref{lem 2} (2), $o_\mu\preceq_1 o_\lambda\prec_1 o_\nu$ implies that $e_1\rightarrow o_\lambda$ in $G_1$ (such a path is represented by the dashed curve in Fig \ref{20}). Combining with $i_\lambda\rightarrow e_2$ in $G_2$, we get that $e_1\rightarrow e_2$ in $G$.

\noindent\textbf{Case 2}: $e_1,e_3\in E(G_2)$. There are two subcases $e_2\in E(G_2)$ and  $e_2\not\in E(G_2)$, which are symmetric with \textbf{Subcase 1.2} and \textbf{Subcase 1.1}, respectively.

\section{Decomposition and cancellation}
In this section, we study some algebraic properties of tensor product and composition of POP-graphs.

The following result is a combinatorial counterpart of Proposition \ref{X}.
\begin{thm}\label{decomposition}
Any POP-graph $(G,\prec)$ has an elementary decomposition,  that is,   $(G,\prec)=(G_n,\prec_n)\circ\cdots \circ(G_1,\prec_1)$, with each $(G_k,\prec_k)$ $(1\leq k\leq n)$ being elementary.
\end{thm}

\begin{proof}

Let $(G, \prec)$ be a POP-graph and $v\in V_{int}(G)$ be a maximal vertex under $\rightarrow$. we will show that $(G, \prec)$ can be presented as a composition $(G_2, \prec_2)\circ(G_1, \prec_1)$ such that $G_2$ is elementary and  $V_{int}(G_2)=\{v\}$. Graphically, taking the POP-graph in Fig \ref{18} as an example, the idea is to cut it along the dotted line into two POP-graphs.

Definition of $(G_1, \prec_1)$:
$(1)$ $E(G_1)=E(G)-O(v)$;
$(2)$ $V(G_1)=\Big(V(G)-\{v\}-\{t(o)|o\in O(v)\}\Big)\sqcup \{t_{h}|h\in I(v)\}$;
$(3)$ for each $e\in E(G_1)-I(v)$, keep $s(e)$ and $t(e)$ unchanged; and for each $h\in I(v)$, keep $s(h)$ unchanged and set $t(h)=t_{h}$;
$(4)$ $\prec_1$ is the restriction of $\prec$.

Definition of  $(G_2, \prec_2)$:
$(1)$ $E(G_2)=O(G)\sqcup I(v)$;
$(2)$ $V(G_2)= \{v\}\sqcup\{t(o)\mid o\in O(G)\}\sqcup \{s_h\mid h\in \Big(O(G)-O(v)\Big)\cup I(v) \} $;
$(3)$ $t(h)$ is unchanged for any $h\in E(G_2)$; $s(h)= s_h$ for $h\in \Big(O(G)-O(v)\Big)\cup I(v)$, and $s(o)= v$ for any $o\in O(v)$;
$(4)$ $\prec_2$ is the restriction of $\prec$.

The fact that $(G, \prec)=(G_2, \prec_2)\circ(G_1, \prec_1)$ can be directly checked.
\end{proof}

The following result is a combinatorial counterpart of Proposition \ref{Y}.
\begin{thm}\label{decomposition 2}
Any elementary POP-graph $(G,\prec)$ has a unique primary decomposition, that is, $(G,\prec)=(G_n,\prec_n)\otimes\cdots \otimes(G_1,\prec_1)$, with each $(G_k,\prec_k)$ $(1\leq k\leq n)$ being prime or unitary.
\end{thm}
\begin{proof}
This follows from the fact that for any processive vertex $v$ of $G$, the set $E(v)$ of incident edges of $v$ is an interval of $(E(G),\prec)$.  In fact, if we assume $\overline{E(v)}=[h_1, h_2]$, then by $(P1)$, we must have $t(h_1)=v=s(h_2)$. For any $e\in [h_1,h_2]$, by $(P2)$, we must have either $t(e)=v$ or $s(e)=v$, hence $e\in E(v)$.
\end{proof}

The following result aims to prove that the composition satisfies cancellation law.
\begin{prop}\label{lem 8}
Let $(H,\prec)$ be an elementary POP-graph with exact one internal vertex. Then

$(1)$ $(H,\prec)\circ(G_1,\prec_1)=(H,\prec)\circ(G_2,\prec_2)$ implies that $(G_1,\prec_1)=(G_2,\prec_2)$.

$(2)$ $(G_1,\prec_1)\circ(H,\prec)=(G_2,\prec_2)\circ(H,\prec)$ implies that $(G_1,\prec_1)=(G_2,\prec_2)$.
\end{prop}
\begin{proof}
We only prove $(1)$, $(2)$ is similar. Let $v$ be the unique internal vertex of $H$, which is of course minimal. By Proposition \ref{lem 6} $(1)$, $I(v)$ is an interval of $(I(H),<)$.
Since $H$ is elementary, by Proposition \ref{lem 6} $(2)$, we can assume $\prec=[i_1,i_{K-1}]\triangleleft I(v)\triangleleft O(v)\triangleleft[i_{L+1},i_n]$, $I(v)=[i_K,\cdots,i_L]$, as shown in Fig \ref{21}.

\begin{figure}[htbp]
\centering
\begin{tikzpicture}[scale=2]
\draw  [loosely dashed](0.5,-0.5) rectangle (4.5,-1.5);

\node at (1.3,-1) {$[i_1,i_{K-1}]$};
\node at (2.7,-0.8) {$I(v)=[i_K,i_L]$};
\node at (2.7,-1.2) {$O(v)$};
\node at (4,-1) {$[i_{L+1},i_n]$};

\node (v1) at (2,-0.5) {};
\node (v2) at (2,-1.5) {};
\node (v3) at (3.5,-0.5) {};
\node (v4) at (3.5,-1.5) {};
\node (v5) at (2,-1) {};
\node (v6) at (3.5,-1) {};
\draw [loosely dashed] (2,-0.5) -- (2,-1.5);
\draw [loosely dashed](3.5,-0.5) --(3.5,-1.5);
\draw[loosely dashed]  (2,-1)-- (3.5,-1);
\end{tikzpicture}
\caption{}
\label{21}
\end{figure}
Assume
$\prec_{1}=Q_1'\triangleleft\{o_1'\}\triangleleft\cdots\triangleleft Q'_n\triangleleft\{o'_{n}\}$ and $\prec_{2}=Q_1''\triangleleft\{o_1''\}\triangleleft\cdots\triangleleft Q''_n\triangleleft\{o''_{n}\}.$
Then
$$\prec\circ\prec_{1}=Q_1'\triangleleft\{\overline{e_1}'\}\triangleleft \cdots\triangleleft Q'_L\triangleleft \{\overline{e_{L}}'\} \triangleleft O(v)\triangleleft  Q'_{L+1}\triangleleft\{\overline{e_{L+1}}'\}\triangleleft\cdots\triangleleft Q'_n\triangleleft\{\overline{e_n}'\},$$  $$\prec\circ\prec_{2}=Q_1''\triangleleft\{\overline{e_1}''\}\triangleleft \cdots\triangleleft Q''_L\triangleleft \{\overline{e_{L}}''\} \triangleleft O(v)\triangleleft  Q''_{L+1}\triangleleft\{\overline{e_{L+1}}''\}\triangleleft\cdots\triangleleft Q''_n\triangleleft\{\overline{e_n}''\}.$$

Recall that $(H,\prec)\circ(G_1,\prec_1)=(H,\prec)\circ(G_2,\prec_2)$ means that there exist bijections $\phi:E(H\circ G_1)\rightarrow E(H\circ G_2)$ and $\psi:V(H\circ G_1)\rightarrow V(H\circ G_2)$, which preserve the adjacency relations and the planar orders. Then to show that the restriction of $\phi,\psi$ induce an isomorphism of $(G_1,\prec_1)$ and $(G_2,\prec_2)$, we only need to show that $\phi$ preserves the shuffle structures of $\prec\circ\prec_{1}$ and $\prec\circ\prec_{2}$. To show this, we will prove that $\phi([\overline{e_1}',\overline{e_{K-1}}'])=[\overline{e_1}'',\overline{e_{K-1}}'']$, $\phi(I(v))=I(v)$, $\phi(O(v))=O(v)$, $\phi([\overline{e_{L+1}}',\overline{e_{n}}'])=[\overline{e_{L+1}}'',\overline{e_n}'']$.

In fact, consider the sets of output edges, we have $$O(H\circ G_1)=[\overline{e_1}',\overline{e_{K-1}}']\triangleleft O(v)\triangleleft [\overline{e_{L+1}}',\overline{e_{n}}']
,$$ $$O(H\circ G_2)=[\overline{e_1}'',\overline{e_{K-1}}'']\triangleleft O(v)\triangleleft[\overline{e_{L+1}}'',\overline{e_{n}}''].$$ Since $\phi$ induces a bijection between $O(H\circ G_1)$ and $O(H\circ G_1)$, by counting the number of elements, we must have $\phi([\overline{e_1}',\overline{e_{K-1}}'])=[\overline{e_1}'',\overline{e_{K-1}}'']$, $\phi(O(v))=O(v)$, $\phi([\overline{e_{L+1}}',\overline{e_{n}}'])=[\overline{e_{L+1}}'',\overline{e_n}'']$, where the second fact implies that $\phi(I(v))=I(v)$.

\end{proof}

The following result is a direct consequence of Theorem \ref{decomposition} and Proposition \ref{lem 8}.
\begin{thm}\label{cancel}
The composition  satisfies  cancellation law, that is, if $(G_2,\prec_2)\circ(G_1,\prec_1)=(G_2',\prec_2') \circ(G_1',\prec_1')$, then $(G_1,\prec_1)= (G_1',\prec_1')$ implies that $(G_2,\prec_2)=(G_2',\prec_2')$ and $(G_2,\prec_2)=(G_2',\prec_2')$ implies that $(G_1,\prec_1)=(G_1',\prec_1')$.
\end{thm}

The cancellation law for tensor product is obvious.

\section{Freeness of $\mathcal{POP}$}
In this section, we want to prove Theorem \ref{free2}, that is, to show the freeness of $\mathcal{POP}$. For this, we only need to show that for any \textbf{POP-diagram} on a POP-graph $(G,\prec)$ in a monoidal category, its \textbf{value} can be defined and is independent of the decompositions of $(G,\prec)$ (see Definition \ref{pop-diagram} for the definition of a POP-diagram in a monoidal category, which  is similar as that in Joyal and Street's framework \cite{[JS91]}). By Theorem \ref{decomposition} and \ref{decomposition 2}, we can always define a value for a POP-diagram in a monoidal category. Then the only thing we are left to show is that the value of a POP-diagram in a monoidal category is independent of its decompositions.

We use induction on $|V_{int}(G)|$. If $|V_{int}(G)|=1$,  $(G,\prec)$ is elementary and by Theorem \ref{decomposition 2}, has a unique primary decomposition, and therefore the value of any diagram on $(G,\prec)$  is unique.
Assume that the uniqueness of the value is true for $|V_{int}(G)|< n$, we want to show that the uniqueness of the value is also true for $|V_{int}(G)|= n$.
Assume $(G_n,\prec_n)\circ\cdots\circ (G_1,\prec_1)$ and  $(G'_n,\prec'_n)\circ\cdots\circ (G'_1,\prec'_1)$ be two elementary decompositions of $(G,\prec)$ with $V_{int}(G_i)=\{v_i\}$ and $V_{int}(G'_i)=\{v'_i\}$ ($1\leq i\leq n$). Clear, we must have $v_n=v_l'$ for some $l\in[1,\cdots,n]$.

If $n=l$,  we must have $(G_n,\prec_n)=(G'_n,\prec_n')$ (consider the proof of Proposition \ref{lem 8}),  and by Theorem \ref{cancel}, $(G_{n-1},\prec_{n-1})\circ\cdots\circ (G_1,\prec_1)=(G'_{n-1},\prec'_{n-1})\circ\cdots\circ (G'_1,\prec'_1)$.
Then by the induction hypothesis,  the values of diagrams on $(G_n,\prec_n)$ and $(G_{n-1},\prec_{n-1})\circ\cdots\circ (G_1,\prec_1)$ are equal to the values of diagrams $(G'_n,\prec_n')$ and $(G'_{n-1},\prec'_{n-1})\circ\cdots\circ (G'_1,\prec'_1)$, respectively. Compose the values in $\mathcal{V}$, we obtain the unique value.

Otherwise, the proof is reduced to the simple claim that: if $(G,\prec)$ is elementary with $V_{int}(G)=\{v_1,v_2\}$, then it has exactly two elementary decompositions $(G_2,\prec_2)\circ (G_1,\prec_1)$, $(G'_2,\prec'_2)\circ (G'_1,\prec'_1)$ such that $V_{int}(G_1)=V_{int}(G'_2)=\{v_1\}$ and $V_{int}(G'_1)=V_{int}(G_2)=\{v_2\}$, see Fig \ref{22} for an example. Clearly, the values of any diagram with respect to the two decompositions are equal.
\begin{figure}[h]
\centering
$$
\begin{matrix}
\begin{matrix}
\begin{tikzpicture}[scale=0.9]
\draw [loosely dashed] (-1.5,0.5) rectangle (5,-2.5);
\node (v1) at (-1.5,-1) {};
\node (v2) at (5,-1) {};
\draw  [loosely dashed](-1.5,-1)--(5,-1);

\node (v3) at (-1,0.5) {};
\node (v4) at (-1,-2.5) {};
\node (v5) at (-0.6,0.5) {};
\node (v6) at (-0.6,-2.5) {};
\draw   (-1,0.5) -- (-1,-2.5)[postaction={decorate, decoration={markings,mark=at position 0.25 with {\arrow[black]{stealth}}}}][postaction={decorate, decoration={markings,mark=at position 0.75 with {\arrow[black]{stealth}}}}];
\draw  (-0.6,0.5) -- (-0.6,-2.5)[postaction={decorate, decoration={markings,mark=at position 0.25 with {\arrow[black]{stealth}}}}][postaction={decorate, decoration={markings,mark=at position 0.75 with {\arrow[black]{stealth}}}}];

\node (v8) at (0.4,-0.3) {};
\draw[fill] (v8) circle [radius=0.07];
\node [left] at (0.4,-0.3) {$v_1$};
\node (v7) at (0,0.5) {};
\node (v9) at (0.8,0.5) {};
\node (v10) at (0,-1) {};
\node (v11) at (0.5,-1) {};
\node (v12) at (0.9,-1) {};
\node (v13) at (0,-2.5) {};
\node (v14) at (0.5,-2.5) {};
\node (v15) at (0.9,-2.5) {};
\draw (0,0.5)  --(0.4,-0.3)[postaction={decorate, decoration={markings,mark=at position 0.5 with {\arrow[black]{stealth}}}}];
\draw  (0.8,0.5) -- (0.4,-0.3)[postaction={decorate, decoration={markings,mark=at position 0.5 with {\arrow[black]{stealth}}}}];
\draw  (0.4,-0.3) --  (0,-1)[postaction={decorate, decoration={markings,mark=at position 0.65 with {\arrow[black]{stealth}}}}];
\draw  (0.4,-0.3) -- (0.5,-1)[postaction={decorate, decoration={markings,mark=at position 0.65 with {\arrow[black]{stealth}}}}];
\draw  (0.4,-0.3) -- (0.9,-1)[postaction={decorate, decoration={markings,mark=at position 0.65 with {\arrow[black]{stealth}}}}];
\draw  (0,-1) -- (0,-2.5)[postaction={decorate, decoration={markings,mark=at position 0.5 with {\arrow[black]{stealth}}}}];
\draw  ((0.5,-1) -- (0.5,-2.5)[postaction={decorate, decoration={markings,mark=at position 0.5 with {\arrow[black]{stealth}}}}];
\draw  (0.9,-1) -- (0.9,-2.5)[postaction={decorate, decoration={markings,mark=at position 0.5 with {\arrow[black]{stealth}}}}];

\node (v16) at (1.5,0.5) {};
\node (v17) at (1.5,-2.5) {};
\draw  (1.5,0.5) -- (1.5,-2.5)[postaction={decorate, decoration={markings,mark=at position 0.25 with {\arrow[black]{stealth}}}}][postaction={decorate, decoration={markings,mark=at position 0.75 with {\arrow[black]{stealth}}}}];

\node (v22) at (2.5,-1.8) {};
\draw[fill] (v22) circle [radius=0.07];
\node [left] at (2.5,-1.8) {$v_2$};
\node (v23) at (2.2,-2.5) {};
\node (v24) at (2.8,-2.5) {};
\node (v19) at (2.1,-1) {};
\node (v21) at (2.9,-1) {};
\node (v18) at (2.1,0.5) {};
\node (v20) at (2.9,0.5) {};
\draw  (2.1,0.5) -- (2.1,-1)[postaction={decorate, decoration={markings,mark=at position 0.5 with {\arrow[black]{stealth}}}}];
\draw  (2.9,0.5) --  (2.9,-1)[postaction={decorate, decoration={markings,mark=at position 0.5 with {\arrow[black]{stealth}}}}];
\draw (2.1,-1) -- (2.5,-1.8)[postaction={decorate, decoration={markings,mark=at position 0.5 with {\arrow[black]{stealth}}}}];
\draw   (2.9,-1) -- (2.5,-1.8)[postaction={decorate, decoration={markings,mark=at position 0.5 with {\arrow[black]{stealth}}}}];
\draw  (2.5,-1.8) -- (2.2,-2.5)[postaction={decorate, decoration={markings,mark=at position 0.65 with {\arrow[black]{stealth}}}}];
\draw  (2.5,-1.8) -- (2.8,-2.5)[postaction={decorate, decoration={markings,mark=at position 0.65 with {\arrow[black]{stealth}}}}];

\node (v25) at (3.5,0.5) {};
\node (v26) at (3.5,-2.5) {};
\node (v27) at (4.1,0.5) {};
\node (v28) at (4.1,-2.5) {};
\node (v29) at (4.6,0.5) {};
\node (v30) at (4.6,-2.5) {};
\draw  (3.5,0.5) -- (3.5,-2.5)[postaction={decorate, decoration={markings,mark=at position 0.25 with {\arrow[black]{stealth}}}}][postaction={decorate, decoration={markings,mark=at position 0.75 with {\arrow[black]{stealth}}}}];
\draw  (4.1,0.5) --  (4.1,-2.5)[postaction={decorate, decoration={markings,mark=at position 0.25 with {\arrow[black]{stealth}}}}][postaction={decorate, decoration={markings,mark=at position 0.75 with {\arrow[black]{stealth}}}}];
\draw  (4.6,0.5) -- (4.6,-2.5)[postaction={decorate, decoration={markings,mark=at position 0.25 with {\arrow[black]{stealth}}}}][postaction={decorate, decoration={markings,mark=at position 0.75 with {\arrow[black]{stealth}}}}];

\end{tikzpicture}
\end{matrix}
&\begin{matrix}
=
\end{matrix}
&
\begin{matrix}
\begin{tikzpicture}[scale=0.9]
\draw [loosely dashed] (-1.5,0.5) rectangle (5,-2.5);
\node (v1) at (-1.5,-1) {};
\node (v2) at (5,-1) {};
\draw  [loosely dashed](-1.5,-1)--(5,-1);

\node (v3) at (-1,0.5) {};
\node (v4) at (-1,-2.5) {};
\node (v5) at (-0.6,0.5) {};
\node (v6) at (-0.6,-2.5) {};
\draw   (-1,0.5) -- (-1,-2.5)[postaction={decorate, decoration={markings,mark=at position 0.25 with {\arrow[black]{stealth}}}}][postaction={decorate, decoration={markings,mark=at position 0.75 with {\arrow[black]{stealth}}}}];
\draw  (-0.6,0.5) -- (-0.6,-2.5)[postaction={decorate, decoration={markings,mark=at position 0.25 with {\arrow[black]{stealth}}}}][postaction={decorate, decoration={markings,mark=at position 0.75 with {\arrow[black]{stealth}}}}];

\node (v16) at (1.5,0.5) {};
\node (v17) at (1.5,-2.5) {};
\draw  (1.5,0.5) -- (1.5,-2.5)[postaction={decorate, decoration={markings,mark=at position 0.25 with {\arrow[black]{stealth}}}}][postaction={decorate, decoration={markings,mark=at position 0.75 with {\arrow[black]{stealth}}}}];

\node (v25) at (3.5,0.5) {};
\node (v26) at (3.5,-2.5) {};
\node (v27) at (4.1,0.5) {};
\node (v28) at (4.1,-2.5) {};
\node (v29) at (4.6,0.5) {};
\node (v30) at (4.6,-2.5) {};
\draw  (3.5,0.5) -- (3.5,-2.5)[postaction={decorate, decoration={markings,mark=at position 0.25 with {\arrow[black]{stealth}}}}][postaction={decorate, decoration={markings,mark=at position 0.75 with {\arrow[black]{stealth}}}}];
\draw  (4.1,0.5) --  (4.1,-2.5)[postaction={decorate, decoration={markings,mark=at position 0.25 with {\arrow[black]{stealth}}}}][postaction={decorate, decoration={markings,mark=at position 0.75 with {\arrow[black]{stealth}}}}];
\draw  (4.6,0.5) -- (4.6,-2.5)[postaction={decorate, decoration={markings,mark=at position 0.25 with {\arrow[black]{stealth}}}}][postaction={decorate, decoration={markings,mark=at position 0.75 with {\arrow[black]{stealth}}}}];

\node (v11) at (0.4,-1.8) {};
\draw[fill] (v11) circle [radius=0.07];
\node[left]at (0.4,-1.8) {$v_1$};
\node (v12) at (0,-2.5) {};
\node (v13) at (0.5,-2.5) {};
\node (v14) at (1.1,-2.5) {};
\node (v8) at (0,-1) {};
\node (v10) at (0.8,-1) {};
\node (v7) at (0,0.5) {};
\node (v9) at (0.8,0.5) {};
\draw  (0,0.5) -- (0,-1)[postaction={decorate, decoration={markings,mark=at position 0.5 with {\arrow[black]{stealth}}}}];
\draw  (0.8,0.5) -- (0.8,-1)[postaction={decorate, decoration={markings,mark=at position 0.5 with {\arrow[black]{stealth}}}}];
\draw  (0,-1)-- (0.4,-1.8)[postaction={decorate, decoration={markings,mark=at position 0.5 with {\arrow[black]{stealth}}}}];
\draw  (0.8,-1) -- (0.4,-1.8)[postaction={decorate, decoration={markings,mark=at position 0.5 with {\arrow[black]{stealth}}}}];
\draw  (0.4,-1.8) -- (0,-2.5)[postaction={decorate, decoration={markings,mark=at position 0.65 with {\arrow[black]{stealth}}}}];
\draw (0.4,-1.8) -- (0.5,-2.5)[postaction={decorate, decoration={markings,mark=at position 0.65 with {\arrow[black]{stealth}}}}];
\draw  (0.4,-1.8)-- (1.1,-2.5)[postaction={decorate, decoration={markings,mark=at position 0.65 with {\arrow[black]{stealth}}}}];

\node (v18) at (2.5,-0.3) {};
\draw[fill] (v18) circle [radius=0.07];
\node [left] at (2.5,-0.3) {$v_2$};
\node (v15) at (2.1,0.5) {};
\node (v19) at (2.8,0.5) {};
\node (v20) at (2.1,-1) {};
\node (v21) at (2.8,-1) {};
\node (v22) at (2.1,-2.5) {};
\node (v23) at (2.8,-2.5) {};
\draw   (2.1,0.5) -- (2.5,-0.3)[postaction={decorate, decoration={markings,mark=at position 0.5 with {\arrow[black]{stealth}}}}];
\draw  (2.8,0.5)  -- (2.5,-0.3)[postaction={decorate, decoration={markings,mark=at position 0.5 with {\arrow[black]{stealth}}}}];
\draw  (2.5,-0.3) -- (2.1,-1)[postaction={decorate, decoration={markings,mark=at position 0.65 with {\arrow[black]{stealth}}}}];
\draw (2.5,-0.3) -- (2.8,-1)[postaction={decorate, decoration={markings,mark=at position 0.65 with {\arrow[black]{stealth}}}}];
\draw  (2.1,-1) -- (2.1,-2.5)[postaction={decorate, decoration={markings,mark=at position 0.5 with {\arrow[black]{stealth}}}}];
\draw  (2.8,-1) -- (2.8,-2.5)[postaction={decorate, decoration={markings,mark=at position 0.5 with {\arrow[black]{stealth}}}}];
\end{tikzpicture}
\end{matrix}
\end{matrix}
$$
\caption{}
\label{22}
\end{figure}

In fact, if $l<n$, then for any $k\in [l+1,n]$, both $v'_l\not\rightarrow v'_k$ and $v'_k\not\rightarrow v'_l$  hold. Then by the above claim, we can construct step by step a series of elementary decompositions of $(G,\prec)$ to exchange $v'_l$ with $v'_{l+1}$, $v'_{l+1}$, ..., $v_n'$ such that all the values of the diagram with respect to these decompositions are equal, where in the last step we use the result of the above case of $n=l$.

\section{Free constructions by POP-graphs}
In this section, we introduce the category $\mathbf{Semi.Ten}$ of semi-tensor schemes and show a construction of a free monoidal category on a semi-tensor scheme using our combinatorial framework.

We begin with some notations. For a set $S$, we denote the set of words in $S$ by $W(S)$, which can be viewed as a free monoid on $S$, and denote the set of non-empty words by $W^+(S)$, which can be viewed as a free semi-group on $S$. When $S$ is empty, $W(S)=\{\emptyset\}$ and $W^+(S)$ is empty, where $\emptyset$ denotes the empty word. Clearly, $W(S)=W^+(S)\sqcup \{\emptyset\}$.

\begin{defn}\label{T}
A \textbf{semi-tensor scheme} $\mathcal{D}$ consists of two (\textbf{possibly empty}) sets $Ob(\mathcal{D})$, $Mor(\mathcal{D})$ and two functions from $Mor(\mathcal{D})$ to $W^+(Ob(\mathcal{D}))$  $$s,t:Mor(\mathcal{D})\rightarrow W^+(Ob(\mathcal{D})),$$ which are called source and target maps, respectively.
\end{defn}
Clearly, if $Ob(\mathcal{D})$ is empty, then $Mor(\mathcal{D})$ must be empty, and in this case we say that $\mathcal{D}$ is empty. $\mathbf{PRM}$ and $\mathcal{PRM}$ can also be viewed as examples of this definition.

\begin{rem}
The notion of a semi-tensor scheme is different from that of a tensor scheme (see Definition \ref{T1}), which was first introduced by Joyal and Street in \cite{[JS88]} and was also called a \textbf{monoidal signatures} in \cite{[S11]}. A tensor scheme is a special \textbf{computad} \cite{[S76]} (also called \textbf{polygraph} \cite{[B93]}).
The main difference between them is that $W(Ob(\mathcal{D}))$  is replaced by $W^+(Ob(\mathcal{D}))$. Due to this change, Definition \ref{T} is \textbf{not} a special case of that of a computad (polygraph).
\end{rem}

\begin{defn}\label{mor}
A \textbf{morphism} $\varphi:\mathcal{D}_1\rightarrow\mathcal{D}_2$ of semi-tensor schemes consists of two functions $\varphi_o:Ob(\mathcal{D}_1)\rightarrow Ob(\mathcal{D}_2)$ and $\varphi_m:Mor(\mathcal{D}_1)\rightarrow Mor(\mathcal{D}_2)$ such that the diagram
$$\xymatrix{W^+(Ob(\mathcal{D}_1))\ar[d]_{\widehat{\varphi_o}}&\ar[d]^{\varphi_m}Mor(\mathcal{D}_1)\ar[r]^{t_1}\ar[l]_{s_1}&\ar[d]^{\widehat{\varphi_o}}W^+(Ob(\mathcal{D}_1))
\\W^+(Ob(\mathcal{D}_2))&\ar[l]_{s_2}\ar[r]^{t_2}Mor(\mathcal{D}_2)&W^+(Ob(\mathcal{D}_2))}$$
commutes, where $\widehat{\varphi_o}:W^+(Ob(\mathcal{D}_1))\rightarrow W^+(Ob(\mathcal{D}_2))$ is the natural extension of $\varphi_o$ which sends $x_1\cdots x_n$ to $\varphi_o(x_1)\cdots \varphi_o(x_n)$.
\end{defn}

As in Joyal and Street's work, there are two types of diagrams, one for semi-tensor schemes and the other for semi-groupal/monoidal categories.
\begin{defn}
A \textbf{POP-diagram} $\Gamma$ in \textbf{semi-tensor scheme} $\mathcal{D}$ consists of a POP-graph $(G,\prec)$ and two label functions $$\gamma_o: E(G)\rightarrow Ob(\mathcal{D}),\ \ \ \ \gamma_m:V_{int}(G)\rightarrow Mor(\mathcal{D})$$
such that for every internal vertex $v\in V_{int}(G)$,
 $$s(\gamma_m(v))=\gamma_o(h_1)\cdots \gamma_o(h_m),\ \ \ \ t(\gamma_m(v))=\gamma_o(h'_1)\cdots \gamma_o(h'_n),$$
where $h_1\prec\cdots\prec h_m$ and $h'_1\prec\cdots\prec h'_n$  are the ordered lists of edges in $I(v)$ and $O(v)$, respectively.
The \textbf{domain} and \textbf{codomain} of this POP-diagram are the non-empty \textbf{words}  $\gamma_o(i_1)\cdots \gamma_o(i_k)$ and $\gamma_o(o_1) \cdots \gamma_o(o_l)$  in $Ob(\mathcal{D})$, respectively, where $i_1\prec \cdots\prec i_k$ and $o_1\prec \cdots\prec o_l$ are the ordered lists of edges in $I(G)$ and $O(G)$, respectively.
\end{defn}

\begin{defn}\label{pop-diagram}
A \textbf{POP-diagram} $\Gamma$ in \textbf{semi-groupal/monoidal category} $\mathcal{S}$ consists of a POP-graph $(G,\prec)$ and two label functions $$\gamma_o: E(G)\rightarrow Ob(\mathcal{S}),\ \ \ \ \gamma_m:V_{int}(G)\rightarrow Mor(\mathcal{S})$$
such that for every internal vertex $v\in V_{int}(G)$,
 $$s(\gamma_m(v))=\gamma_o(h_1)\otimes\cdots\otimes \gamma_o(h_m),\ \ \ \ t(\gamma_m(v))=\gamma_o(h'_1)\otimes\cdots \otimes\gamma_o(h'_n),$$
where $h_1\prec\cdots\prec h_m$ and $h'_1\prec\cdots\prec h'_n$  are the ordered lists of edges in $I(v)$ and $O(v)$, respectively.
The \textbf{domain} and \textbf{codomain} of this POP-diagram are the non-empty \textbf{words} $\gamma_o(i_1)\cdots \gamma_o(i_k)$ and $\gamma_o(o_1) \cdots \gamma_o(o_l)$  in $Ob(\mathcal{S})$, respectively,
where $i_1\prec \cdots\prec i_k$ and $o_1\prec \cdots\prec o_l$ are the ordered lists of edges in $I(G)$ and $O(G)$, respectively.
\end{defn}

We write $\Gamma=[G,\prec,\gamma_o, \gamma_m]$. In the case that $(G,\prec)$ is unitary or invertible ($V_{int}(G)$ is empty), then it has only edge label and $\gamma_m$ is the unique function from the empty set (as an initial object in the category of sets) to $Mor(\mathcal{D})$ or $Mor(\mathcal{S})$.

 The set of POP-diagrams in $\mathcal{D}$ (or $\mathcal{S}$) is denoted as $Diag(\mathcal{D})$ (or $Diag(\mathcal{S})$). For each type of POP-diagrams, their tensor products and compositions are clear. For any semi-groupal category $\mathcal{S}$, there is a function from $Diag(\mathcal{S})$ to $Mor(\mathcal{S})$ sending a POP-diagram in $\mathcal{S}$ to its \textbf{value}.

A POP-diagram is called prime (unitary, invertible, elementary) if the underlying processive graph is prime (unitary, invertible, elementary). The set of prime POP-diagrams in $\mathcal{D}$ (or $\mathcal{S}$) is denoted as $Prim(\mathcal{D})$ (or $Prim(\mathcal{S})$).

It is easy to see that a morphism $\varphi:\mathcal{D}_1\rightarrow\mathcal{D}_2$ of semi-tensor schemes can induce a \textbf{pushforward} $\varphi_\ast:Diag(\mathcal{D}_1)\rightarrow Diag(\mathcal{D}_2)$, which sends a POP-diagram $[G,\prec, \gamma_o,\gamma_m]$ in $\mathcal{D}_1$ to a POP-diagram  $\varphi_\ast([G,\prec, \gamma_o,\gamma_m])=[G,\prec, \varphi(\gamma_o),\varphi(\gamma_m)]$ in $\mathcal{D}_2$. Clearly, $\varphi_\ast$ preserves tensor product and composition of POP-diagrams. Similarly, a semi-groupal functor $\theta:\mathcal{S}_1\rightarrow\mathcal{S}_2$ produces a pushforward $\theta_\ast$, which sends, just as $\varphi_\ast$, a POP-diagram in $\mathcal{S}_1$ to a POP-diagram in $\mathcal{S}_2$.

A POP-diagram is called prime (unitary, invertible, elementary) if its underlying POP-graph is prime (unitary, invertible, elementary).

\begin{ex}\label{pri}
Let $\mathcal{M}$ be a monoidal category with unit object $I$. Fig \ref{26} shows a prime POP-diagram in $\mathcal{M}$ with domain $IIXY$, codomain $UIV$, and with the unique internal vertex labelled by a morphism $f:X\otimes Y\rightarrow U\otimes V$.
\begin{figure}[H]
\centering
\begin{tikzpicture}[scale=0.6]
\node (v2) at (0,0) {};
\draw[fill] (v2) circle [radius=0.09];
\node[scale=0.6] at (2.3,0) {$f:X\otimes Y\rightarrow U\otimes V$};
\node [scale=0.6](v1) at (-1.5,1.5) {$I$};
\node [scale=0.6](v3) at (-0.5,1.5) {$I$};
\node [scale=0.6](v4) at (0.5,1.5) {$X$};
\node [scale=0.6](v5) at (1.5,1.5) {$Y$};
\node [scale=0.6](v6) at (-1,-1.5) {$U$};
\node[scale=0.6] (v7) at (0,-1.5) {$I$};
\node[scale=0.6] (v8) at (1,-1.5) {$V$};
\draw  (v1) -- (0,0)[postaction={decorate, decoration={markings,mark=at position 0.47 with {\arrow[black]{stealth}}}}];
\draw  (v3) -- (0,0)[postaction={decorate, decoration={markings,mark=at position 0.47 with {\arrow[black]{stealth}}}}];
\draw  (v4) -- (0,0)[postaction={decorate, decoration={markings,mark=at position 0.47 with {\arrow[black]{stealth}}}}];
\draw  (v5) -- (0,0)[postaction={decorate, decoration={markings,mark=at position 0.47 with {\arrow[black]{stealth}}}}];
\draw  (0,0)-- (v6)[postaction={decorate, decoration={markings,mark=at position 0.65 with {\arrow[black]{stealth}}}}];
\draw (0,0) -- (v7)[postaction={decorate, decoration={markings,mark=at position 0.65 with {\arrow[black]{stealth}}}}];
\draw  (0,0) -- (v8)[postaction={decorate, decoration={markings,mark=at position 0.65 with {\arrow[black]{stealth}}}}];
\end{tikzpicture}
\caption{}
\label{26}
\end{figure}

\end{ex}

In what follows, we list the concrete definitions of the adjunctions shown in Fig \ref{23}.

$(1)$ Definition of $\mathcal{F}^+$. For any semi-tensor scheme $\mathcal{D}$, $\mathcal{F}^+(\mathcal{D})$ is a semi-groupal category such that (\romannumeral1) $Ob(\mathcal{F}^+(\mathcal{D}))=W^+(Ob(\mathcal{D}))$; (\romannumeral2) $Mor(\mathcal{F}^+(\mathcal{D}))=Diag(\mathcal{D})$; (\romannumeral3) the source and target of a morphism is given by its domain and codomain, respectively; (\romannumeral4) the tensor product of objects is given by concatenation of words; (\romannumeral5) the tensor product and composition of morphisms are given by tensor product and composition of POP-diagrams, respectively. Clearly, if $\mathcal{D}$ is empty, then $\mathcal{F}^+(\mathcal{D})$ is empty.

For a morphism $\varphi:\mathcal{D}_1\rightarrow\mathcal{D}_2$ of semi-tensor schemes, $\mathcal{F}^+(\varphi):\mathcal{F}^+(\mathcal{D}_1)\rightarrow\mathcal{F}^+(\mathcal{D}_2)$ is a semi-groupal functor, which acts on objects as $\widehat{\varphi_o}$ (see Definition \ref{mor}) and on morphisms as $\varphi_\ast$.

$(2)$ Definition of $\mathcal{U}^+$. For any semi-groupal category $\mathcal{S}$, $\mathcal{U}^+(\mathcal{S})$ is a semi-tensor scheme such that (\romannumeral1) $Ob(\mathcal{U}^+(\mathcal{S}))=Ob(\mathcal{S})$; (\romannumeral2)  $Mor(\mathcal{U}^+(\mathcal{S}))=Prim(\mathcal{S})$; (\romannumeral3) the source and target of a morphism is given by its domain and codomain, respectively. Clearly, if $\mathcal{S}$ is empty, then $\mathcal{U}^+(\mathcal{S})$ is empty.

For a semi-groupal functor $\theta:\mathcal{S}_1\rightarrow\mathcal{S}_2$, $\mathcal{U}^+(\theta):\mathcal{U}^+(\mathcal{S}_1)\rightarrow\mathcal{U}^+(\mathcal{S}_2)$ is a morphism of semi-tensor schemes which acts on objects as $\theta$ and on morphisms as $\theta_\ast$.

$(3)$ Definition of $\mathfrak{F}$. For any  semi-groupal category $\mathcal{S}$, $\mathfrak{F}(\mathcal{S})$ is the monoidal category $\mathcal{S}+\mathbf{1}$, which is constructed from $\mathcal{S}$ by adjoining an \textbf{isolated object} $\mathbf{1}$ (see Remark \ref{iso} for this definition)  as a unit object, that is, (\romannumeral1) $Ob(\mathcal{S}+\mathbf{1})=Ob(\mathcal{S})\sqcup \{\mathbf{1}\}$ and $Mor(\mathcal{S}+\mathbf{1})=Mor(\mathcal{S})\sqcup \{Id_\mathbf{1}\}$; (\romannumeral2)  $\mathbf{1}\otimes\mathbf{1}=\mathbf{1}$ and for any $X\in Ob(\mathcal{S})$, $\mathbf{1}\otimes X=X\otimes \mathbf{1}=X$; (\romannumeral3) $Id_\mathbf{1}\otimes Id_\mathbf{1}=Id_\mathbf{1}$ and for any $f\in Mor(\mathcal{S})$, $Id_\mathbf{1}\otimes f=f\otimes Id_\mathbf{1}=f$; (\romannumeral4) $Id_\mathbf{1}\circ Id_\mathbf{1}=Id_\mathbf{1}$; (\romannumeral5) for objects in $Ob(\mathcal{S})$, their tensor products are same as in $\mathcal{S}$ and for morphisms in $Ob(\mathcal{S})$, their tensor products and compositions are same as in $\mathcal{S}$.

For a semi-groupal functor $\theta:\mathcal{S}_1\rightarrow\mathcal{S}_2$, $\mathfrak{F}(\theta):\mathcal{S}+\mathbf{1}\rightarrow\mathcal{S}+\mathbf{1}'$ is a monoidal functor extending $F$ such that $\mathfrak{F}(\theta)(\mathbf{1})=\mathbf{1}'$, $\mathfrak{F}(\theta)(Id_\mathbf{1})=Id_{\mathbf{1}'}$.

$(4)$ Definition of $\mathfrak{U}$. It is the forgetful functor, which just treats a monoidal category as a semi-groupal category and a monoidal functor as a semi-groupal functor.

$(5)$ Definition of $\mathcal{F}$. The definition of $\mathcal{F}$ is similar as that of $\mathcal{F}^+$, except that (\romannumeral1) for any tensor scheme $\mathcal{D}$, $Ob(\mathcal{F}(\mathcal{D}))=W(Ob(\mathcal{D}))$; (\romannumeral2) $Mor(\mathcal{F}(\mathcal{D}))=Diag(\mathcal{D})\sqcup \{\bigcirc\}$, where $s(\bigcirc)=t(\bigcirc)=\emptyset$. Other conditions on $\emptyset$ and $\bigcirc$ are same as those in the definition of $\mathfrak{F}$ (by identifying $\emptyset$ with $\mathbf{1}$ and $\bigcirc$ with $Id_\mathbf{1}$). For a morphism $\varphi:\mathcal{D}_1\rightarrow\mathcal{D}_2$ of semi-tensor schemes, $\mathcal{F}(\varphi):\mathcal{F}(\mathcal{D}_1)\rightarrow\mathcal{F}(\mathcal{D}_2)$ is a monoidal functor, which extends $\mathcal{F}^+(\varphi):\mathcal{F}^+(\mathcal{D}_1)\rightarrow\mathcal{F}^+(\mathcal{D}_2)$ by the conditions that $\mathcal{F}(\varphi)(\emptyset)=\emptyset$, $\mathcal{F}(\varphi)(\bigcirc)=\bigcirc$.

$(6)$ Definition of $\mathcal{U}$. Its definition is same as that of $\mathcal{U}^+$.

\begin{thm}\label{adjo}
As defined above, we have three adjunctions $\mathcal{F}^+:\mathbf{Semi.Ten}\rightleftharpoons \mathbf{Semi.Gro}:\mathcal{U}^+$, $\mathfrak{F}:\mathbf{Semi.Gro}\rightleftharpoons \mathbf{Mon.Cat}:\mathfrak{U}$, $\mathcal{F}:\mathbf{Semi.Ten}\rightleftharpoons \mathbf{Mon.Cat}:\mathcal{U}$, especially the diagram in Fig \ref{23} is a commutative diagram of adjunctions, that is,  $\mathcal{F}\dashv \mathcal{U}=(\mathfrak{F}\dashv \mathfrak{U})\circ(\mathcal{F}^+\dashv \mathcal{U}^+)$.
\end{thm}

\begin{proof}
We show that $\mathcal{F}^+:\mathbf{Semi.Ten}\rightleftharpoons \mathbf{Semi.Gro}:\mathcal{U}^+$ is an adjunction.
Given a semi-tensor scheme $\mathcal{D}$ and a semi-groupal category $\mathcal{S}$, we want to show that there is a natural bijection between $Hom(\mathcal{F}^+(\mathcal{D}), \mathcal{S})$ and $Hom(\mathcal{D}, \mathcal{U}^+(\mathcal{S}))$. In fact, for any $\vartheta:\mathcal{F}^+(\mathcal{D})\rightarrow \mathcal{S}$, there is a $\widehat{\vartheta}:\mathcal{D}\rightarrow \mathcal{U}^+(\mathcal{S})$,  whose definition is as follows. On objects $\widehat{\vartheta}$ is the restriction of $\vartheta$; for any $f:X_1X_2\cdots X_m\rightarrow Y_1Y_2\cdots Y_n$ in $Mor(\mathcal{D})$, $\widehat{\vartheta}(f)$ is the following prime POP-diagram in $\mathcal{S}$
$$
\begin{matrix}
\begin{tikzpicture}[scale=1]
\node (v1) at (0,0) {};
\draw[fill] (0,0) circle [radius=0.055];
\node at (2,0){$\begin{matrix}\vartheta(\begin{matrix}\begin{tikzpicture}[scale=.33]
\node (v1) at (0,0) {};
\draw[fill] (0,0) circle [radius=0.1];
\node [scale=.7] at (0.8,0){$f$};
\node [scale=.7] (v2) at (-2,1.5) {$X_1$};
\node [scale=.7](v3) at (-0.5,1.5) {$X_2$};
\node [scale=.7](v4) at (1,1.5) {$\cdots$};
\node [scale=.7](v5) at (2.5,1.5) {$X_m$};
\node [scale=.7](v6) at (-2,-1.5) {$Y_1$};
\node [scale=.7](v7) at (-0.5,-1.5) {$Y_2$};
\node [scale=.7] at (1,-1.5) {$\cdots$};
\node [scale=.7](v8) at (2.5,-1.5) {$Y_n$};
\draw  (v2) -- (0,0)[postaction={decorate, decoration={markings,mark=at position .50 with {\arrow[black]{stealth}}}}];
\draw  (v3) -- (0,0)[postaction={decorate, decoration={markings,mark=at position .50 with {\arrow[black]{stealth}}}}];
\draw  (v5) -- (0,0)[postaction={decorate, decoration={markings,mark=at position .50 with {\arrow[black]{stealth}}}}];
\draw  (v6) -- (0,0)[postaction={decorate, decoration={markings,mark=at position .50 with {\arrowreversed[black]{stealth}}}}];
\draw  (v7) -- (0,0)[postaction={decorate, decoration={markings,mark=at position .50 with {\arrowreversed[black]{stealth}}}}];
\draw  (v8) -- (0,0)[postaction={decorate, decoration={markings,mark=at position .50 with {\arrowreversed[black]{stealth}}}}];
\end{tikzpicture}\end{matrix})\end{matrix}$};
\node (v2) at (-2,1.5) {$\vartheta(X_1)$};
\node (v3) at (-0.5,1.5) {$\vartheta(X_2)$};
\node (v4) at (1,1.5) {$\cdots$};
\node (v5) at (2.5,1.5) {$\vartheta(X_m)$};
\node (v6) at (-2,-1.5) {$\vartheta(Y_1)$};
\node (v7) at (-0.5,-1.5) {$\vartheta(Y_2)$};
\node at (1,-1.5) {$\cdots$};
\node (v8) at (2.5,-1.5) {$\vartheta(Y_n).$};
\draw  (v2) -- (0,0)[postaction={decorate, decoration={markings,mark=at position .50 with {\arrow[black]{stealth}}}}];
\draw  (v3) -- (0,0)[postaction={decorate, decoration={markings,mark=at position .50 with {\arrow[black]{stealth}}}}];
\draw  (v5) -- (0,0)[postaction={decorate, decoration={markings,mark=at position .50 with {\arrow[black]{stealth}}}}];
\draw  (v6) -- (0,0)[postaction={decorate, decoration={markings,mark=at position .50 with {\arrowreversed[black]{stealth}}}}];
\draw  (v7) -- (0,0)[postaction={decorate, decoration={markings,mark=at position .50 with {\arrowreversed[black]{stealth}}}}];
\draw  (v8) -- (0,0)[postaction={decorate, decoration={markings,mark=at position .50 with {\arrowreversed[black]{stealth}}}}];
\end{tikzpicture}
\end{matrix}
$$

Conversely, for any $\varphi:\mathcal{D}\rightarrow \mathcal{U}^+(\mathcal{S})$, there is a $\overline{\varphi}:\mathcal{F}^+(\mathcal{D})\rightarrow\mathcal{S}$, whose definition is as follows. For any $X_1X_2\cdots X_m\in Ob(\mathcal{F}^+(\mathcal{D}))$, $\overline{\varphi}(X_1X_2\cdots X_m)=\varphi(X_1)\otimes \varphi(X_2)\otimes \cdots\otimes \varphi(X_m)$; for any $\Gamma=[G,\prec,\gamma_o,\gamma_m]\in Diag(\mathcal{D})$, $\overline{\varphi}(\Gamma)$ is \textbf{the value of} the "pushforward" $[G,\prec,\gamma'_o,\gamma'_m]$, where for each edge $e$ of $G$, $\gamma'_o(e)=\varphi(\gamma(e))$, and for each internal vertex $v$ of $G$, $\gamma'_m(v)$ is \textbf{the value of} $\varphi(\gamma_m(v))$.
The naturality and bijectivity of this correspondence are easy to check. The other facts in this theorem are also easy to check.
\end{proof}

\begin{rem}\label{cg}
It can be directly checked that the associated monad of $\mathcal{F}\dashv \mathcal{U}$ (or $\mathcal{F}^+\dashv \mathcal{U}^+$) has a clear graphical description, which is given by \textbf{coarse-graining} of POP-graphs.
\end{rem}

\section{Unit convention and its generalizations}
In this section, we explain the unit convention as a kind of quotient construction and show an idea to generalize it.

For any monoidal category $\mathcal{M}$, the counit of $\mathcal{F}\dashv \mathcal{U}$ gives a monoidal functor $\varepsilon_\mathcal{M}:\mathcal{F}\mathcal{U}(\mathcal{M}) \rightarrow \mathcal{M}$, which sends a POP-diagram in $\mathcal{M}$ to its value.
In what follows, we will show that there is a quotient monoidal category $\Pi(\mathcal{M})$ of $\mathcal{F}\mathcal{U}(\mathcal{M})$ with the property that for any monoidal functor $\theta: \mathcal{F}\mathcal{U}(\mathcal{M})\rightarrow \mathcal{N}$ with $\theta(I_\mathcal{M})=I_\mathcal{N}$, there exists a unique monoidal functor $\widetilde{\theta}:\Pi(\mathcal{M})\rightarrow\mathcal{N}$ such that the following diagram commutes
$$\xymatrix{\mathcal{F}\mathcal{U}(\mathcal{M})\ar[rr]^{\pi_{\mathcal{M}}}\ar[rrd]^(0.6){\theta}&&\Pi(\mathcal{M})\ar[d]^{\exists !\widetilde{\theta}}\\ &&\mathcal{N},}$$
where $\pi_{\mathcal{M}}$ is the quotient monoidal functor, $I_\mathcal{M}$ and $I_\mathcal{N}$ are the unit objects of $\mathcal{M}$ and $\mathcal{N}$, respectively. (Under the adjunction $\mathcal{F}\dashv \mathcal{U}$, we can equivalently consider the class of morphisms of semi-tensor schemes $\varphi: \mathcal{U}(\mathcal{M})\rightarrow \mathcal{U}(\mathcal{N})$ with $\varphi(I_\mathcal{M})=I_{\mathcal{N}}$.)
\begin{rem}\label{rem}
Since $\theta: \mathcal{F}\mathcal{U}(\mathcal{M})\rightarrow \mathcal{N}$ is a monoidal functor, then $\theta(\emptyset)=I_\mathcal{N}$ and $\theta(\bigcirc)=Id_{I_\mathcal{N}}$. Note that in $\mathcal{F}\mathcal{U}(\mathcal{M})$, the identity morphism of $I_\mathcal{M}$ is $\begin{matrix}
\begin{tikzpicture}[scale=0.4]
\node (v2) at (-0.5,0.5) {};
\node [scale=0.5](v1) at (-0.5,2) {$I_\mathcal{M}$};
\node [scale=0.5](v3) at (-0.5,-1) {};
\draw  (v1) -- (v3)[postaction={decorate, decoration={markings,mark=at position 0.47 with {\arrow[black]{stealth}}}}];
\end{tikzpicture}
\end{matrix}$ (do not confuse with $\begin{matrix}
\begin{tikzpicture}[scale=0.4]
\node (v2) at (-0.5,0.5) {};
\draw[fill] (-0.5,0.5) circle [radius=0.09];
\node [scale=0.5](v1) at (-0.5,2) {$I_\mathcal{M}$};
\node [scale=0.5](v3) at (-0.5,-1) {$I_\mathcal{M}$};
\draw  (v1) -- (-0.5,0.5)[postaction={decorate, decoration={markings,mark=at position 0.47 with {\arrow[black]{stealth}}}}];
\draw  (-0.5,0.5)-- (v3)[postaction={decorate, decoration={markings,mark=at position 0.66 with {\arrow[black]{stealth}}}}];
\node [scale=0.7]at (0.5,0.5) {$Id_{I_\mathcal{M}}$};
\end{tikzpicture}
\end{matrix}$), so $\theta(I_\mathcal{M})=I_\mathcal{N}$ is equivalent to $\theta(
\begin{matrix}
\begin{tikzpicture}[scale=0.4]
\node (v2) at (-0.5,0.5) {};
\node [scale=0.5](v1) at (-0.5,2) {$I_\mathcal{M}$};
\node [scale=0.5](v3) at (-0.5,-1) {};
\draw  (v1) -- (v3)[postaction={decorate, decoration={markings,mark=at position 0.47 with {\arrow[black]{stealth}}}}];
\end{tikzpicture}
\end{matrix})=Id_{I_\mathcal{N}}$. Clearly, $\theta(I_\mathcal{M})=I_\mathcal{N}$ implies only that $\theta(\begin{matrix}
\begin{tikzpicture}[scale=0.4]
\node (v2) at (-0.5,0.5) {};
\draw[fill] (-0.5,0.5) circle [radius=0.09];
\node [scale=0.5](v1) at (-0.5,2) {$I_\mathcal{M}$};
\node [scale=0.5](v3) at (-0.5,-1) {$I_\mathcal{M}$};
\draw  (v1) -- (-0.5,0.5)[postaction={decorate, decoration={markings,mark=at position 0.47 with {\arrow[black]{stealth}}}}];
\draw  (-0.5,0.5)-- (v3)[postaction={decorate, decoration={markings,mark=at position 0.66 with {\arrow[black]{stealth}}}}];
\node [scale=0.7]at (0.5,0.5) {$Id_{I_\mathcal{M}}$};
\end{tikzpicture}
\end{matrix})\in Mor(I_\mathcal{N}, I_\mathcal{N})$, with no other constraints.
\end{rem}
$\Pi(\mathcal{M})$ is constructed as follows. $Ob(\Pi(\mathcal{M}))$ is the quotient set of $W(Ob(\mathcal{M}))$ by the equivalence relation $\sim_o$ that: $w_1\sim_o w_2$ if for any monoidal functor $\theta: \mathcal{F}\mathcal{U}(\mathcal{M})\rightarrow \mathcal{N}$ with  $\theta(I_\mathcal{M})=I_\mathcal{N}$, $\theta(w_1)=\theta(w_2).$
It is easy to see that $Ob(\Pi(\mathcal{M}))=W(Ob(\mathcal{M})-\{I_\mathcal{M}\})$; $Mor(\Pi(\mathcal{M}))$ is the quotient set of $Diag(\mathcal{M})\sqcup \{\bigcirc\}$ $(=Mor(\mathcal{F}\mathcal{U}(\mathcal{M})))$ by the equivalence relation $\sim_m$ that: $\Gamma_1\sim_m \Gamma_2$ if for any monoidal functor $\theta: \mathcal{F}\mathcal{U}(\mathcal{M})\rightarrow \mathcal{N}$ with  $\theta(I_\mathcal{M})=I_\mathcal{N}$, $\theta(\Gamma_1)=\theta(\Gamma_2).$ It turns out that an equivalence class of a POP-diagram $\Gamma$ can be uniquely represented by a "diagram" (called its \textbf{residue}) obtained from $\Gamma$ by \textbf{removing all edges labelled by $I_\mathcal{M}$} (see Fig \ref{isolated} and \ref{change1} for examples), or geometrically represented as a deformation class of \textbf{progressive plane diagram} in $\mathcal{M}$ (see Definition $1.3$ in \cite{[JS91]}) under the convention that removing all edges labelled by $I_\mathcal{M}$. Note that in this case isolated vertices labelled by $Id_{I_\mathcal{M}}$ appears as residues.  Clearly, $\bigcirc\sim_m \begin{matrix}
\begin{tikzpicture}[scale=0.4]
\node (v2) at (-0.5,0.5) {};
\node [scale=0.5](v1) at (-0.5,2) {$I_\mathcal{M}$};
\node [scale=0.5](v3) at (-0.5,-1) {};
\draw  (v1) -- (v3)[postaction={decorate, decoration={markings,mark=at position 0.47 with {\arrow[black]{stealth}}}}];
\end{tikzpicture}
\end{matrix}$, and their equivalence class is represented by the empty diagram $\bigcirc$.

\begin{rem}\label{six}
Similar as in the Definition of a PPG, the upward property is not an indispensable condition in the definition of a progressive plane graph. Following the idea of Di Battista, Tamassia \cite{[BT88]} and Kelly \cite{[K87]} that characterizing an upward plane graph as a subgraph of a plane $st$ graph, it is harmless to define a progressive plane graph as a subgraph of a BPP-graph. Just as pointed out in Caveat $3.2.$ of \cite{[S11]}, the allowed deformations of progressive plane graphs can be arbitrary planar isotopies, namely, we can say that two progressive plane graphs are \textbf{in the same deformation class}, or \textbf{equivalent}, if they are connected by a planar isotopy, where in each intermediate plane graph, the incidence relation of the underlying graph and the boundary of the plane box is unchanged.
\end{rem}

\begin{defn}
We call a progressive plane diagram in $\mathcal{M}$ \textbf{irreducible} if it contains no edges labelled by $I_{\mathcal{M}}$, otherwise we call it \textbf{reducible}.
\end{defn}

From now on, we will  say progressive plane diagrams, or just diagrams, for convenient to stand for their deformation classes. Then $Mor(\Pi(\mathcal{M}))$  is exactly the set of irreducible progressive plane diagram in $\mathcal{M}$.

\begin{figure}[h]
\centering
$$
\begin{matrix}
\begin{matrix}
\begin{tikzpicture}[scale=0.6]
\node (v2) at (0,0) {};
\draw[fill] (v2) circle [radius=0.09];
\node[scale=0.6] at (2.3,0) {$f:X\otimes Y\rightarrow U\otimes V$};
\node [scale=0.6](v1) at (-1.5,1.5) {$I$};
\node [scale=0.6](v3) at (-0.5,1.5) {$I$};
\node [scale=0.6](v4) at (0.5,1.5) {$X$};
\node [scale=0.6](v5) at (1.5,1.5) {$Y$};
\node [scale=0.6](v6) at (-1,-1.5) {$U$};
\node[scale=0.6] (v7) at (0,-1.5) {$I$};
\node[scale=0.6] (v8) at (1,-1.5) {$V$};
\draw  (v1) -- (0,0)[postaction={decorate, decoration={markings,mark=at position 0.47 with {\arrow[black]{stealth}}}}];
\draw  (v3) -- (0,0)[postaction={decorate, decoration={markings,mark=at position 0.47 with {\arrow[black]{stealth}}}}];
\draw  (v4) -- (0,0)[postaction={decorate, decoration={markings,mark=at position 0.47 with {\arrow[black]{stealth}}}}];
\draw  (v5) -- (0,0)[postaction={decorate, decoration={markings,mark=at position 0.47 with {\arrow[black]{stealth}}}}];
\draw  (0,0)-- (v6)[postaction={decorate, decoration={markings,mark=at position 0.65 with {\arrow[black]{stealth}}}}];
\draw (0,0) -- (v7)[postaction={decorate, decoration={markings,mark=at position 0.65 with {\arrow[black]{stealth}}}}];
\draw  (0,0) -- (v8)[postaction={decorate, decoration={markings,mark=at position 0.65 with {\arrow[black]{stealth}}}}];
\end{tikzpicture}
\end{matrix}&\longrightarrow&
\begin{matrix}
\begin{tikzpicture}[scale=0.6]
\node (v2) at (0,0) {};
\draw[fill] (v2) circle [radius=0.09];
\node[scale=0.6] at (2.3,0) {$f:X\otimes Y\rightarrow U\otimes V$};

\node [scale=0.6](v4) at (0.5,1.5) {$X$};
\node [scale=0.6](v5) at (1.5,1.5) {$Y$};
\node [scale=0.6](v6) at (-1,-1.5) {$U$};
\node[scale=0.6] (v8) at (1,-1.5) {$V$};
\draw  (v4) -- (0,0)[postaction={decorate, decoration={markings,mark=at position 0.47 with {\arrow[black]{stealth}}}}];
\draw  (v5) -- (0,0)[postaction={decorate, decoration={markings,mark=at position 0.47 with {\arrow[black]{stealth}}}}];
\draw  (0,0)-- (v6)[postaction={decorate, decoration={markings,mark=at position 0.65 with {\arrow[black]{stealth}}}}];

\draw  (0,0) -- (v8)[postaction={decorate, decoration={markings,mark=at position 0.65 with {\arrow[black]{stealth}}}}];
\end{tikzpicture}
\end{matrix}
\end{matrix}
$$
\caption{Removing all edges labelled by the unit object.}
\label{change1}
\end{figure}
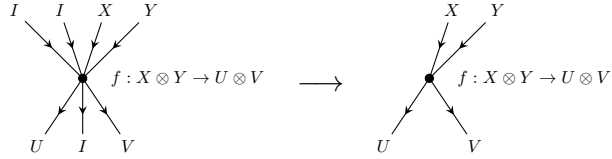
The source and target maps of $\Pi(\mathcal{M})$ are given by the domains and codomains of the residues (or irreducible progressive plane diagrams in $\mathcal{M}$), respectively.  In case that a residue has no input edge (or no output edge), the domain (or codomain) is defined to be $\emptyset$. Especially, both the domain and codomain of $\bigcirc$ are $\emptyset$. See Fig \ref{diagram1} for examples.

\begin{figure}[H]
\centering
$$
\begin{matrix}
\begin{matrix}
\begin{tikzpicture}[scale=0.5]
\node (v2) at (0,0) {};
\draw[fill] (v2) circle [radius=0.09];
\node[scale=0.7] at (2.7,0) {$f_1:X\otimes Y\rightarrow U\otimes V$};

\node [scale=0.7](v4) at (0.5,1.5) {$X$};
\node [scale=0.7](v5) at (1.5,1.5) {$Y$};
\node [scale=0.7](v6) at (-1,-1.5) {$U$};
\node[scale=0.7] (v8) at (1,-1.5) {$V$};
\draw  (v4) -- (0,0)[postaction={decorate, decoration={markings,mark=at position 0.47 with {\arrow[black]{stealth}}}}];
\draw  (v5) -- (0,0)[postaction={decorate, decoration={markings,mark=at position 0.47 with {\arrow[black]{stealth}}}}];
\draw  (0,0)-- (v6)[postaction={decorate, decoration={markings,mark=at position 0.65 with {\arrow[black]{stealth}}}}];

\draw  (0,0) -- (v8)[postaction={decorate, decoration={markings,mark=at position 0.65 with {\arrow[black]{stealth}}}}];
\node[scale=0.7] at (2,-2.2) {dom=$XY$, cod=$UV$};
\end{tikzpicture}
\end{matrix}&
\begin{matrix}
\begin{tikzpicture}[scale=0.5]
\node (v2) at (-3.5,-0.5) {};
\draw[fill] (-3.5,-0.5) circle [radius=0.09];
\node [scale=0.7](v1) at (-5,1.5) {$X_1$};
\node [scale=0.7](v3) at (-4,1.5) {$X_2$};
\node [scale=0.7](v4) at (-2,1.5) {$X_m$};
\draw  (v1) --(-3.5,-0.5)[postaction={decorate, decoration={markings,mark=at position 0.47 with {\arrow[black]{stealth}}}}];
\draw  (v3) -- (-3.5,-0.5)[postaction={decorate, decoration={markings,mark=at position 0.47 with {\arrow[black]{stealth}}}}];
\draw  (v4) -- (-3.5,-0.5)[postaction={decorate, decoration={markings,mark=at position 0.47 with {\arrow[black]{stealth}}}}];
\node at (-3,1.0) {$\cdots$};
\node  [scale=0.7]at (-3.5,-1) {$f_2:X_1\otimes X_2\otimes\cdots \otimes X_m\rightarrow I_\mathcal{M}$};
\node [scale=0.7] at (-3.5,-2) {dom=$X_1X_2\cdots X_m$, cod=$\emptyset$};
\end{tikzpicture}
\end{matrix}&
\begin{matrix}
\begin{tikzpicture}[scale=0.5]
\node (v5) at (-8.5,-0.5) {};
\draw[fill] (-8.5,-0.5) circle [radius=0.09];
\node [scale=0.7](v6) at (-9.5,-2.5) {$Y_1$};
\node [scale=0.7](v7) at (-8.5,-2.5) {$Y_2$};
\node [scale=0.7](v8) at (-6.5,-2.5) {$Y_n$};
\draw (-8.5,-0.5) -- (v6)[postaction={decorate, decoration={markings,mark=at position 0.6 with {\arrow[black]{stealth}}}}];
\draw  (-8.5,-0.5) -- (v7)[postaction={decorate, decoration={markings,mark=at position 0.6 with {\arrow[black]{stealth}}}}];
\draw (-8.5,-0.5) -- (v8)[postaction={decorate, decoration={markings,mark=at position 0.6 with {\arrow[black]{stealth}}}}];
\node at (-7.8,-2) {$\cdots$};
\node [scale=0.7]at (-8.5,0) {$f_3:I_\mathcal{M}\rightarrow Y_1\otimes Y_2\otimes\cdots\otimes Y_n$};
\node [scale=0.7]at (-8.5,-3.5) {dom=$\emptyset$, cod=$Y_1Y_2\cdots Y_n$};
\end{tikzpicture}
\end{matrix}&
\begin{matrix}
\begin{tikzpicture}[scale=0.5]
\node (v2) at (-3.5,-0.5) {};
\draw[fill] (-3.5,-0.5) circle [radius=0.09];
\node  [scale=0.8]at (-3.5,-1) {$f_4:I_\mathcal{M}\rightarrow I_\mathcal{M}$};
\node [scale=0.7]at (-3.5,-2.5) {dom=$\emptyset$, cod=$\emptyset$};
\node at (-4,1) {};
\end{tikzpicture}
\end{matrix}
\end{matrix}
$$
\caption{}
\label{diagram1}
\end{figure}

The monoidal functor $\pi_{\mathcal{M}}$ sends any word $w$ in $Ob(\mathcal{M})$ to its equivalence class ( a "reduction" of $w$ obtained by removing all $I_\mathcal{M}$s in $w$), and sends any POP-diagram in $\mathcal{M}$ to its residue. The universal property of $\pi_{\mathcal{M}}$ is easy to check.

Clearly, the counit $\varepsilon_\mathcal{M}:\mathcal{F}\mathcal{U}(\mathcal{M}) \rightarrow \mathcal{M}$ is a monoidal functor satisfying $\varepsilon_\mathcal{M}(I_\mathcal{M})=I_\mathcal{M}$, so there is a unique monoidal functor $\epsilon_\mathcal{M}:\Pi(\mathcal{M}) \rightarrow \mathcal{M}$  such that the following diagram commutes
$$\xymatrix{\mathcal{F}\mathcal{U}(\mathcal{M})\ar[rr]^{\pi_{\mathcal{M}}}\ar[rd]^(0.6){\varepsilon_\mathcal{M}}&&\Pi(\mathcal{M})\ar[ld]^{\epsilon_\mathcal{M}}\\ &\mathcal{M}.&}$$

For any monoidal functor $\vartheta:\mathcal{M}_1\rightarrow \mathcal{M}_2$, we define $\Pi(\vartheta)$ to be the unique monoidal functor $\Pi(\vartheta):\Pi(\mathcal{M}_1)\rightarrow \Pi(\mathcal{M}_2)$ such that the following diagram commutes $$\xymatrix{\mathcal{F}\mathcal{U}(\mathcal{M}_1)\ar[rr]^{\mathcal{F}\mathcal{U}(\vartheta)}\ar[d]^{\pi_{\mathcal{M}_1}}&&\ar[d]^{\pi_{\mathcal{M}_2}}\mathcal{F}\mathcal{U}(\mathcal{M}_2)\\\Pi(\mathcal{M}_1)\ar[rr]^{\Pi(\vartheta)}&&\Pi(\mathcal{M}_2).}$$

The following result is easy to check.
\begin{thm}
The above construction  defines a (quotient) functor $\Pi:\mathbf{Mon.Cat}\rightarrow \mathbf{Mon.Cat}$ and two natural transformations $\pi: \mathcal{F}\mathcal{U}\rightarrow \Pi$, $\epsilon:\Pi\rightarrow Id_{\mathbf{Mon.Cat}}$.
\end{thm}

We can extend the above construction quite freely by considering more constraints on the classes of $\theta: \mathcal{F}\mathcal{U}(\mathcal{M})\rightarrow \mathcal{N}$, or equivalently, $\varphi: \mathcal{U}(\mathcal{M})\rightarrow \mathcal{U}(\mathcal{N})$. For example, there is a quotient monoidal category $\widetilde{\Pi}(\mathcal{M})$ of $\mathcal{F}\mathcal{U}(\mathcal{M})$ with the property that for any monoidal functor $\theta: \mathcal{F}\mathcal{U}(\mathcal{M})\rightarrow \mathcal{N}$ with
$\theta(
\begin{matrix}
\begin{tikzpicture}[scale=0.4]
\node (v2) at (-0.5,0.5) {};
\draw[fill] (-0.5,0.5) circle [radius=0.09];
\node [scale=0.5](v1) at (-0.5,2) {$I_\mathcal{M}$};
\node [scale=0.5](v3) at (-0.5,-1) {$I_\mathcal{M}$};
\draw  (v1) -- (-0.5,0.5)[postaction={decorate, decoration={markings,mark=at position 0.47 with {\arrow[black]{stealth}}}}];
\draw  (-0.5,0.5)-- (v3)[postaction={decorate, decoration={markings,mark=at position 0.66 with {\arrow[black]{stealth}}}}];
\node [scale=0.7]at (0.5,0.5) {$Id_{I_\mathcal{M}}$};
\end{tikzpicture}
\end{matrix})=Id_{I_\mathcal{N}}$, there exists a unique monoidal functor $\widehat{\theta}:\widetilde{\Pi}(\mathcal{M})\rightarrow\mathcal{N}$ such that the following diagram commutes
$$\xymatrix{\mathcal{F}\mathcal{U}(\mathcal{M})\ar[rr]^{\widetilde{\pi}_{\mathcal{M}}}\ar[rrd]^(0.6){\theta}&&\widetilde{\Pi}(\mathcal{M})\ar[d]^{\exists !\widehat{\theta}}\\ &&\mathcal{N},}$$
where $\widetilde{\pi}_{\mathcal{M}}$ is the quotient monoidal functor.

\begin{rem}
$\theta(
\begin{matrix}
\begin{tikzpicture}[scale=0.4]
\node (v2) at (-0.5,0.5) {};
\draw[fill] (-0.5,0.5) circle [radius=0.09];
\node [scale=0.5](v1) at (-0.5,2) {$I_\mathcal{M}$};
\node [scale=0.5](v3) at (-0.5,-1) {$I_\mathcal{M}$};
\draw  (v1) -- (-0.5,0.5)[postaction={decorate, decoration={markings,mark=at position 0.47 with {\arrow[black]{stealth}}}}];
\draw  (-0.5,0.5)-- (v3)[postaction={decorate, decoration={markings,mark=at position 0.66 with {\arrow[black]{stealth}}}}];
\node [scale=0.7]at (0.5,0.5) {$Id_{I_\mathcal{M}}$};
\end{tikzpicture}
\end{matrix})=Id_{I_\mathcal{N}}$ implies that  $\theta(
\begin{matrix}
\begin{tikzpicture}[scale=0.4]
\node (v2) at (-0.5,0.5) {};
\node [scale=0.5](v1) at (-0.5,2) {$I_\mathcal{M}$};
\node [scale=0.5](v3) at (-0.5,-1) {};
\draw  (v1) -- (v3)[postaction={decorate, decoration={markings,mark=at position 0.47 with {\arrow[black]{stealth}}}}];
\end{tikzpicture}
\end{matrix})=Id_{I_\mathcal{N}}$ (or equivalently, $\theta(I_\mathcal{M})=I_\mathcal{N}$). Clearly, as mentioned in Remark \ref{rem}, the converse is not true, namely, $\theta(
\begin{matrix}
\begin{tikzpicture}[scale=0.4]
\node (v2) at (-0.5,0.5) {};
\node [scale=0.5](v1) at (-0.5,2) {$I_\mathcal{M}$};
\node [scale=0.5](v3) at (-0.5,-1) {};
\draw  (v1) -- (v3)[postaction={decorate, decoration={markings,mark=at position 0.47 with {\arrow[black]{stealth}}}}];
\end{tikzpicture}
\end{matrix})=Id_{I_\mathcal{N}}$ does not imply that $\theta(
\begin{matrix}
\begin{tikzpicture}[scale=0.4]
\node (v2) at (-0.5,0.5) {};
\draw[fill] (-0.5,0.5) circle [radius=0.09];
\node [scale=0.5](v1) at (-0.5,2) {$I_\mathcal{M}$};
\node [scale=0.5](v3) at (-0.5,-1) {$I_\mathcal{M}$};
\draw  (v1) -- (-0.5,0.5)[postaction={decorate, decoration={markings,mark=at position 0.47 with {\arrow[black]{stealth}}}}];
\draw  (-0.5,0.5)-- (v3)[postaction={decorate, decoration={markings,mark=at position 0.66 with {\arrow[black]{stealth}}}}];
\node [scale=0.7]at (0.5,0.5) {$Id_{I_\mathcal{M}}$};
\end{tikzpicture}
\end{matrix})=Id_{I_\mathcal{N}}$.
\end{rem}
$\widetilde{\Pi}(\mathcal{M})$ is constructed as follows. $Ob(\widetilde{\Pi}(\mathcal{M}))$ is same as that of $\Pi(\mathcal{M})$. $Mor(\widetilde{\Pi}(\mathcal{M}))$ is the quotient set of $Diag(\mathcal{M})\sqcup \{\bigcirc\}$ by the equivalence relation $\approx_m$ that: $\Gamma_1\approx_m \Gamma_2$ if for any monoidal functor $\theta: \mathcal{F}\mathcal{U}(\mathcal{M})\rightarrow \mathcal{N}$ with $\theta(
\begin{matrix}
\begin{tikzpicture}[scale=0.4]
\node (v2) at (-0.5,0.5) {};
\draw[fill] (-0.5,0.5) circle [radius=0.09];
\node [scale=0.5](v1) at (-0.5,2) {$I_\mathcal{M}$};
\node [scale=0.5](v3) at (-0.5,-1) {$I_\mathcal{M}$};
\draw  (v1) -- (-0.5,0.5)[postaction={decorate, decoration={markings,mark=at position 0.47 with {\arrow[black]{stealth}}}}];
\draw  (-0.5,0.5)-- (v3)[postaction={decorate, decoration={markings,mark=at position 0.66 with {\arrow[black]{stealth}}}}];
\node [scale=0.7]at (0.5,0.5) {$Id_{I_\mathcal{M}}$};
\end{tikzpicture}
\end{matrix})=Id_{I_\mathcal{N}}$, $\theta(\Gamma_1)=\theta(\Gamma_2).$
In this case,  an equivalence class of POP-diagrams can be represented uniquely by a residue obtained from a POP-diagram in $\mathcal{M}$ by first removing all edges labelled by $I_\mathcal{M}$ (just as previous) and then removing all isolated vertices labelled by $Id_{I_\mathcal{M}}$ (see Fig \ref{change2}).  Clearly, $\bigcirc\approx_m \begin{matrix}
\begin{tikzpicture}[scale=0.4]
\node (v2) at (-0.5,0.5) {};
\node [scale=0.5](v1) at (-0.5,2) {$I_\mathcal{M}$};
\node [scale=0.5](v3) at (-0.5,-1) {};
\draw  (v1) -- (v3)[postaction={decorate, decoration={markings,mark=at position 0.47 with {\arrow[black]{stealth}}}}];
\end{tikzpicture}
\end{matrix} \approx_m \begin{matrix}
\begin{tikzpicture}[scale=0.4]
\node (v2) at (-0.5,0.5) {};
\draw[fill] (-0.5,0.5) circle [radius=0.09];
\node [scale=0.5](v1) at (-0.5,2) {$I_\mathcal{M}$};
\node [scale=0.5](v3) at (-0.5,-1) {$I_\mathcal{M}$};
\draw  (v1) -- (-0.5,0.5)[postaction={decorate, decoration={markings,mark=at position 0.47 with {\arrow[black]{stealth}}}}];
\draw  (-0.5,0.5)-- (v3)[postaction={decorate, decoration={markings,mark=at position 0.66 with {\arrow[black]{stealth}}}}];
\node [scale=0.7]at (0.5,0.5) {$Id_{I_\mathcal{M}}$};
\end{tikzpicture}
\end{matrix}$, and their equivalence class is represented by $\bigcirc$.
\begin{figure}[H]
\centering
$$
\begin{matrix}
\begin{matrix}
\begin{tikzpicture}[scale=0.7]
\draw [dashed] (-2.5,2) rectangle (-0.5,-0.5);
\draw[fill] (-1.5,0.75) circle [radius=0.07];
\node [scale=0.7]at (-2,0.7) {$Id_I$};
\end{tikzpicture}
\end{matrix}&\longrightarrow&\bigcirc&=&
\begin{matrix}
 \begin{tikzpicture}[scale=0.7]
\draw [dashed] (-2.5,2) rectangle (-0.5,-0.5);
\node [scale=0.7]at (-2,0.7) {};
\end{tikzpicture}
\end{matrix}
\end{matrix}
$$
\caption{Removing isolated vertices labelled by the identity morphism of the unit object.}
\label{change2}
\end{figure}
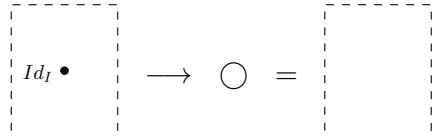

Note that this is exactly the unit convention reviewed in the introduction. Geometrically,  the residues in this case are exactly those irreducible progressive plane diagrams in $\mathcal{M}$ which have no isolated vertices labelled by $Id_{I_{\mathcal{M}}}$.
The source and target maps are, as that of $\Pi(\mathcal{M})$, given by the domains and codomains of residues, respectively.

The definition of $\widetilde{\pi}_{\mathcal{M}}$ is same as $\pi_{\mathcal{M}}$.  The universal property of $\widetilde{\pi}_{\mathcal{M}}$ is easy to check.
Similarly, $\varepsilon_\mathcal{M}(
\begin{matrix}
\begin{tikzpicture}[scale=0.4]
\node (v2) at (-0.5,0.5) {};
\draw[fill] (-0.5,0.5) circle [radius=0.09];
\node [scale=0.5](v1) at (-0.5,2) {$I_\mathcal{M}$};
\node [scale=0.5](v3) at (-0.5,-1) {$I_\mathcal{M}$};
\draw  (v1) -- (-0.5,0.5)[postaction={decorate, decoration={markings,mark=at position 0.47 with {\arrow[black]{stealth}}}}];
\draw  (-0.5,0.5)-- (v3)[postaction={decorate, decoration={markings,mark=at position 0.66 with {\arrow[black]{stealth}}}}];
\node [scale=0.7]at (0.5,0.5) {$Id_{I_\mathcal{M}}$};
\end{tikzpicture}
\end{matrix})=Id_{I_\mathcal{M}}$ implies that there is a unique monoidal functor $\widetilde{\epsilon}_\mathcal{M}:\widetilde{\Pi}(\mathcal{M}) \rightarrow \mathcal{M}$  such that the following diagram commutes
$$\xymatrix{\mathcal{F}\mathcal{U}(\mathcal{M})\ar[rr]^{\widetilde{\pi}_{\mathcal{M}}}\ar[rd]^(0.6){\varepsilon_\mathcal{M}}&&\widetilde{\Pi}(\mathcal{M})\ar[ld]^{\widetilde{\epsilon}_\mathcal{M}}\\ &\mathcal{M}.&}$$

For any monoidal functor $\vartheta:\mathcal{M}_1\rightarrow \mathcal{M}_2$, we define $\widetilde{\Pi}(\vartheta)$ to be the unique monoidal functor $\widetilde{\Pi}(\vartheta):\widetilde{\Pi}(\mathcal{M}_1)\rightarrow \widetilde{\Pi}(\mathcal{M}_2)$ such that the following diagram commutes $$\xymatrix{\mathcal{F}\mathcal{U}(\mathcal{M}_1)\ar[rr]^{\mathcal{F}\mathcal{U}(\vartheta)}\ar[d]^{\widetilde{\pi}_{\mathcal{M}_1}}&&\ar[d]^{\widetilde{\pi}_{\mathcal{M}_2}}\mathcal{F}\mathcal{U}(\mathcal{M}_2)\\\widetilde{\Pi}(\mathcal{M}_1)\ar[rr]^{\widetilde{\Pi}(\vartheta)}&&\widetilde{\Pi}(\mathcal{M}_2).}$$

As previous, we have the following result.

\begin{thm}
The above construction  defines a (quotient) functor $\widetilde{\Pi}:\mathbf{Mon.Cat}\rightarrow \mathbf{Mon.Cat}$ and two natural transformations $\widetilde{\pi}: \mathcal{F}\mathcal{U}\rightarrow \widetilde{\Pi}$, $\widetilde{\epsilon}:\widetilde{\Pi}\rightarrow Id_{\mathbf{Mon.Cat}}$. Moreover, there is a natural transformation $\omega:\Pi\rightarrow \widetilde{\Pi}$ such that for any $\mathcal{M}$, the following diagram commutes $$\xymatrix{\mathcal{F}\mathcal{U}(\mathcal{M})\ar[rr]^{\pi_{\mathcal{M}}}\ar[rrd]_(0.5){\widetilde{\pi}_\mathcal{M}}&&\Pi(\mathcal{M})\ar[d]^{\omega_\mathcal{M}}\\ &&\widetilde{\Pi}(\mathcal{M}),}$$
where $\omega_\mathcal{M}$ is an identity map on objects and acts on morphisms as an operation to remove all isolated vertices labelled by $Id_{I_{\mathcal{M}}}$.
\end{thm}

\begin{rem}
$(1)$ If we consider the constraints for $\theta: \mathcal{F}\mathcal{U}(\mathcal{M})\rightarrow \mathcal{N}$ that $\theta(\begin{matrix}
\begin{tikzpicture}[scale=0.4]
\node (v2) at (-0.5,0.5) {};
\draw[fill] (-0.5,0.5) circle [radius=0.09];
\node [scale=0.5](v1) at (-0.5,2) {$I_\mathcal{M}$};
\node [scale=0.5](v3) at (-0.5,-1) {$I_\mathcal{M}$};
\draw  (v1) -- (-0.5,0.5)[postaction={decorate, decoration={markings,mark=at position 0.47 with {\arrow[black]{stealth}}}}];
\draw  (-0.5,0.5)-- (v3)[postaction={decorate, decoration={markings,mark=at position 0.66 with {\arrow[black]{stealth}}}}];
\node [scale=0.9]at (0.3,0.5) {$\alpha$};
\end{tikzpicture}
\end{matrix})=Id_{I_\mathcal{N}}$ for all $\alpha\in Mor_{\mathcal{M}}(I_\mathcal{M},I_{\mathcal{M}})$, then the resulting residues will contain no isolated vertices.

$(2)$ If we consider the constraints for $\theta: \mathcal{F}\mathcal{U}(\mathcal{M})\rightarrow \mathcal{N}$ that $\theta(\Gamma)=Id_{I_\mathcal{N}}$ for all prime diagram $\Gamma$ with the unique internal vertex labelled by $Id_{I_\mathcal{M}}$, then the resulting residues will contain no vertices labelled by $Id_{I_\mathcal{M}}$.
\end{rem}

For more examples, we can arbitrarily choose a set $\Omega$ of POP-graphs, and consider \textbf{POP-diagrams of type} $\Omega$, namely, POP-diagrams with underlying POP-graphs in $\Omega$. Let $\Omega(\mathcal{M})$ denotes the set of POP-diagrams of type $\Omega$ in $\mathcal{\mathcal{M}}$. Just as previous, there should be a quotient monoidal category $\Pi_\Omega(\mathcal{M})$ of $\mathcal{F}\mathcal{U}(\mathcal{M})$ with the property that for any monoidal functor $\theta: \mathcal{F}\mathcal{U}(\mathcal{M})\rightarrow \mathcal{N}$  with $\theta(\Gamma)=Id_{I_\mathcal{N}}$ for any $\Gamma\in \Omega(M)$, there exists a unique monoidal functor $\overline{\theta}:\Pi_\Omega(\mathcal{M})\rightarrow\mathcal{N}$ such that the following diagram commutes
$$\xymatrix{\mathcal{F}\mathcal{U}(\mathcal{M})\ar[rr]^{\overline{\pi}_{\mathcal{M}}}\ar[rrd]^(0.6){\theta}&&\Pi_\Omega(\mathcal{M})\ar[d]^{\exists !\overline{\theta}}\\ &&\mathcal{N},}$$
where $\overline{\pi}_{\mathcal{M}}$ is the quotient monoidal functor. Clearly, $\Pi_\Omega$ is a functor and $\overline{\pi}$ is a natural transformation.

We call $\mathcal{M}$ an \textbf{$\Omega$-monoidal category} if for any $\Gamma\in \Omega(M)$, $\varepsilon_\mathcal{M}(\Gamma)=Id_{I_\mathcal{M}}$, where  $\varepsilon_\mathcal{M}:\mathcal{F}\mathcal{U}(\mathcal{M}) \rightarrow \mathcal{M}$ is given by the counit of $\mathcal{F}\dashv \mathcal{U}$. Clearly, if and only if  $\mathcal{M}$ is an $\Omega$-monoidal category,  there is a unique monoidal functor $\overline{\epsilon}_\mathcal{M}:\Pi_\Omega(\mathcal{M}) \rightarrow \mathcal{M}$  such that the following diagram commutes
$$\xymatrix{\mathcal{F}\mathcal{U}(\mathcal{M})\ar[rr]^{\overline{\pi}_{\mathcal{M}}}\ar[rd]^(0.6){\varepsilon_\mathcal{M}}&&\Pi_\Omega(\mathcal{M})\ar[ld]^{\overline{\epsilon}_\mathcal{M}}\\ &\mathcal{M}.&}$$

\begin{rem}
Note that the construction $\Omega(\mathcal{M})$ is functorial, that is, it can be pushed forward by any monoidal functor. The functorial property of $\Pi_\Omega$ follows from that of $\Omega(\mathcal{M})$. However, as mentioned previously, to produce a quotient monoidal category (under the assumption that $\mathbf{Mon.Cat}$ is cocomplete),  we can consider an arbitrary set of constraints on the class of $\theta: \mathcal{F}\mathcal{U}(\mathcal{M})\rightarrow \mathcal{N}$, even the constraints are not functorial with respect to monoidal functors. Nevertheless, we need the constraints to be functorial for constructing a quotient functor (the constraints in the constructions of $\Pi$ and $\widetilde{\Pi}$ are not of the form of $\Omega(\mathcal{M})$ but they are functorial).
\end{rem}

Let $\mathbf{\Omega.Mon}$ (called \textbf{$\Omega$-variety}) be the category of $\Omega$-monoidal categories, which is a full subcategory of $\mathbf{Mon.Cat}$. Then, restricted in $\mathbf{\Omega.Mon}$, $\overline{\epsilon}_\mathcal{M}$ should as previous  produce a natural transformation from $\Pi_\Omega$ to $Id_{\mathbf{\Omega.Mon}}$ $Id_{\Omega.Mon}$. The problem here is to give a concrete graphical convention describing $\Pi_\Omega$, just as those for $\Pi$ and $\widetilde{\Pi}$, which we call \textbf{$\Omega$-convention} as a generalization of the unit convention. Intuitively, such convention should be given by first removing all sub-diagrams of type $\Omega$ and then removing substrings of input edges and output edges whose labels form the domains or codomains of the diagrams of type $\Omega$.

\begin{rem}\label{pi}
It is reasonable to expect that there is a close relation between quotient monoidal categories of $\mathcal{POP}$ and $\Omega$-conventions above. Given a class $\Omega$ of POP-graphs, there should be a quotient monoidal category $\Pi_\Omega$ (abuse of notation) with the property that for any monoidal functors $\rho: \mathcal{POP}\rightarrow \mathcal{N}$ with $\rho\Big((G,\prec)\Big)=Id_{I_\mathcal{N}} $ for all $(G,\prec)\in\Omega$, there exists a unique monoidal functor $\widetilde{\rho}:\Pi_\Omega\rightarrow\mathcal{N}$ such that the following diagram commutes
$$\xymatrix{\mathcal{POP}\ar[rr]^{\varpi}\ar[rrd]^(0.6){\rho}&&\Pi_\Omega\ar[d]^{\exists !\widetilde{\rho}}\\ &&\mathcal{N},}$$
where $\varpi$ is the quotient monoidal functor. The  problem of $\Omega$-convention should relate with the  problem of finding a concrete graphical description of $\Pi_\Omega$. This situation is much similar as the theory of \textbf{PI-algebras} \cite{[DF04]}, where $\Omega$ and $\Omega$-monoidal categories play the roles just as the set of generators of a \textbf{$T$-ideal} and PI-algebras, respectively.
\end{rem}

\section{Revisit Joyal and Street's construction}
In this section, we show that the unit convention is naturally compatible with Joyal and Street's construction of a free monoidal category on a tensor scheme. More precisely, we construct two adjunctions which produce the functors  $\Pi$ and $\widetilde{\Pi}$ in previous section, respectively.

We first recall the definition of a tensor scheme and introduce morphisms for them.
\begin{defn}\label{T1}
A tensor scheme $\mathcal{T}$ consists of two (\textbf{possibly empty}) sets $Ob(\mathcal{T})$, $Mor(\mathcal{T})$  and two functions from $Mor(\mathcal{T})$ to $W(Ob(\mathcal{T}))$ $$s,t:Mor(\mathcal{T})\rightarrow W(Ob(\mathcal{T})),$$ which are called source and target maps, respectively.
\end{defn}
Note that, unlike the case of semi-tensor schemes, the emptiness of $Ob(\mathcal{T})$ does not imply the emptiness of $Mor(\mathcal{T})$.

\begin{defn}\label{mor1}
A \textbf{morphism} $\phi:\mathcal{T}_1\rightarrow\mathcal{T}_2$ of tensor schemes consists of two functions $\phi_o:Ob(\mathcal{T}_1)\sqcup\{\emptyset\}\rightarrow Ob(\mathcal{T}_2)\sqcup\{\emptyset\}$ and $\phi_m:Mor(\mathcal{T}_1)\rightarrow Mor(\mathcal{T}_2)$ such that $\phi_o(\emptyset)=\emptyset$ and the following diagram commutes
$$\xymatrix{W(Ob(\mathcal{T}_1))\ar[d]_{\widehat{\phi_o}}&\ar[d]^{\phi_m}Mor(\mathcal{T}_1)\ar[r]^{t_1}\ar[l]_{s_1}&\ar[d]^{\widehat{\phi_o}}W(Ob(\mathcal{T}_1))
\\W(Ob(\mathcal{T}_2))&\ar[l]_{s_2}\ar[r]^{t_2}Mor(\mathcal{T}_2)&W(Ob(\mathcal{T}_2)),}$$ where $\widehat{\phi_o}:W(Ob(\mathcal{T}_1))\rightarrow W(Ob(\mathcal{T}_2))$ is the unique morphism of monoids that naturally extends $\phi_o$. (For example, if $\phi_o(X_1)=Y_1$, $\phi_o(X_2)=\emptyset$, $\phi_o(X_3)=Y_3$, then $\widehat{\phi}_o(X_1X_2X_3)=Y_1Y_3$.)
\end{defn}

The category of tensor schemes and their morphisms is denoted by $\mathbf{Ten.Sch}$.

\begin{rem}
There are naturally two functors $\mathbf{T}: \mathbf{Semi.Ten} \rightarrow \mathbf{Ten.Sch}$ and  $\mathbf{S}: \mathbf{Ten.Sch} \rightarrow \mathbf{Semi.Ten}$, whose definitions are as follows.

$(1)$ Given a semi-tensor scheme $\mathcal{D}$, $\mathbf{T}(\mathcal{D}) (=\mathcal{D})$ is a tensor scheme with $Ob(\mathbf{T}(\mathcal{D}))=Ob(\mathcal{D})$,  $Mor(\mathbf{T}(\mathcal{D}))=Mor(\mathcal{D})$, and $s,t$ unchanging. Given a morphism $\varphi:\mathcal{D}_1\rightarrow \mathcal{D}_2$ of semi-tensor schemes, $\mathbf{T}(\varphi) :\mathbf{T}(\mathcal{D}_1)\rightarrow\mathbf{T}(\mathcal{D}_2)$ is a morphism of tensor schemes that extends $\varphi$ by $\mathbf{T}(\varphi)_o (\emptyset)=\emptyset$.

$(2)$ Given a tensor scheme $\mathcal{T}$, $\mathbf{S}(\mathcal{T})$ is a semi-tensor scheme with $Ob(\mathbf{S}(\mathcal{T}))=Ob(\mathcal{T})\sqcup\{\emptyset\}$,  $Mor(\mathbf{T}(\mathcal{T}))=Mor(\mathcal{T})$, and $s,t$ unchanging. Given a morphism $\phi:\mathcal{T}_1\rightarrow \mathcal{T}_2$ of tensor schemes, $\mathbf{S}(\phi) :\mathbf{S}(\mathcal{T}_1)\rightarrow\mathbf{S}(\mathcal{T}_2)$ is a morphism of semi-tensor schemes that extends $\phi$ by $\mathbf{S}(\phi)_o (\emptyset)=\emptyset$.

Both $\mathbf{T}$ and $\mathbf{S}$ are faithful, however, they \textbf{do not} form an adjunction.
\end{rem}

 Now we want to show that there is an adjunction $\mathbf{F}: \mathbf{Ten.Sch} \rightleftharpoons \mathbf{Mon.Cat}:\mathbf{U}$, whose definition is as follows.

$(1)$ $\mathbf{F}$ is given by Joyal and Street's construction of a free monoidal category on a tensor scheme. For any tensor scheme $\mathcal{T}$, $\mathbf{F}(\mathcal{T})$ is a monodial category with $Ob(\mathbf{F}(\mathcal{T}))=W(Ob(\mathcal{T}))$, and $Mor(\mathbf{F}(\mathcal{T}))$ being the set of empty diagram $\bigcirc$ and progressive plane diagrams in $\mathcal{T}$ (see Definition $34$ in \cite{[W13]} for the definition of a \textbf{progressive plane diagram in a tensor scheme}). The source and target maps are, as in previous section, given by the domain and codomain, respectively.

The unit object of $\mathbf{F}(\mathcal{T})$ is $\emptyset$  and its identity morphism is $\bigcirc$. For any non-unit object $w\in W^+(Ob(\mathcal{T}))$, its identity morphism is the invertible diagram with domain $w$ and codomain $w$. The tensor product of objects is given by concatenation of words. The tensor product and composition of morphisms are given by tensor product and composition of progressive plane diagrams, respectively.

Given a morphism $\phi:\mathcal{T}_1\rightarrow \mathcal{T}_2$ of tensor schemes, $\mathbf{F}(\phi):\mathbf{F}(\mathcal{T}_1)\rightarrow \mathbf{F}(\mathcal{T}_2)$ is a monoidal functor such that (\romannumeral1) on objects, $\mathbf{F}(\phi)$ is equal to $\widehat{\phi_o}$, especially $\mathbf{F}(\phi)(\emptyset)=\emptyset$; (\romannumeral2) on morphisms, $\mathbf{F}(\phi)(\bigcirc)=\bigcirc$ and it sends a progressive plane diagram $[G,\gamma_o,\gamma_m]$ in $\mathcal{T}_1$ (abuse of notation, $G$ here denotes a progressive plane graph) to \textbf{the residue of its "pushforward"}  $[G,\phi(\gamma_o),\phi(\gamma_m)]$ obtained by \textbf{removing all edges labelled by $\emptyset$}. Note that the "pushforward" $[G,\phi(\gamma_o),\phi(\gamma_m)]$ is in general \textbf{not} a progressive plane diagram in $\mathcal{T}_2$, as it may have some edges labelled by $\emptyset$, which do not appear in a progressive plane diagram in a tensor scheme according to its definition, see Remark below.

\begin{rem}
There is another way to understand the construction of $\mathbf{F}$. For any tensor scheme $\mathcal{T}$, $\mathbf{F}(\mathcal{T})$ is a quotient monoidal category of $\mathcal{F}(\mathbf{S}(\mathcal{T}))$ defined by the property that for any monoidal functor $\vartheta:\mathcal{F}(\mathbf{S}(\mathcal{T}))\rightarrow \mathcal{N}$ with $\vartheta(\emptyset)=I_{\mathcal{N}}$, there exists a unique monoidal functor $\widetilde{\vartheta}:\mathbf{F}(\mathcal{T})\rightarrow\mathcal{N}$ such that the following diagram commutes
$$\xymatrix{\mathcal{F}(\mathbf{S}(\mathcal{T}))\ar[rr]^{\pi_{\mathcal{T}}}\ar[rrd]^(0.6){\vartheta}&&\mathbf{F}(\mathcal{T})\ar[d]^{\exists !\widetilde{\vartheta}}\\ &&\mathcal{N},}$$
where $\pi_{\mathcal{T}}$ is the quotient monoidal functor. On objects, $\pi_{\mathcal{T}}$ is the identity map; on morphisms, $\pi_{\mathcal{T}}$ turns out to be given by the convention that removing all edges labelled by $\emptyset$.

The definition of a progressive plane diagram in a tensor scheme $\mathcal{T}$ coincides with the convention that removing all edges labelled by $\emptyset$. For any progressive plane diagram $[G,\gamma_o,\gamma_m]$ in $\mathcal{T}$, $(1)$ there is no edges of $G$ labelled by $\emptyset$, as $\emptyset\not\in Ob(\mathcal{T})$ and for any edge $e$ of $G$, by definition $\gamma_o(e)\in Ob(\mathcal{T})$; $(2)$ for an \textbf{inner node} $v$ of $G$, if it has no incoming edge, then $s(\gamma_m(v))=\emptyset$; if it has no outgoing edge, then $t(\gamma_m(v))=\emptyset$; if it is isolated, then $s(\gamma_m(v))=t(\gamma_m(v))=\emptyset$. See Fig \ref{diagram} for examples.

\begin{figure}[h]
\centering
$$
\begin{matrix}
\begin{matrix}
\begin{tikzpicture}[scale=0.5]
\node (v2) at (0,0) {};
\draw[fill] (v2) circle [radius=0.09];
\node[scale=0.7] at (2.7,0) {$f_1:XY\rightarrow UV$};

\node [scale=0.7](v4) at (0.5,1.5) {$X$};
\node [scale=0.7](v5) at (1.5,1.5) {$Y$};
\node [scale=0.7](v6) at (-1,-1.5) {$U$};
\node[scale=0.7] (v8) at (1,-1.5) {$V$};
\draw  (v4) -- (0,0)[postaction={decorate, decoration={markings,mark=at position 0.47 with {\arrow[black]{stealth}}}}];
\draw  (v5) -- (0,0)[postaction={decorate, decoration={markings,mark=at position 0.47 with {\arrow[black]{stealth}}}}];
\draw  (0,0)-- (v6)[postaction={decorate, decoration={markings,mark=at position 0.65 with {\arrow[black]{stealth}}}}];

\draw  (0,0) -- (v8)[postaction={decorate, decoration={markings,mark=at position 0.65 with {\arrow[black]{stealth}}}}];
\node[scale=0.7] at (1.3,-2.2) {dom=$XY$, cod=$UV$};
\end{tikzpicture}
\end{matrix}&
\begin{matrix}
\begin{tikzpicture}[scale=0.5]
\node (v2) at (-3.5,-0.5) {};
\draw[fill] (-3.5,-0.5) circle [radius=0.09];
\node [scale=0.7](v1) at (-5,1.5) {$X_1$};
\node [scale=0.7](v3) at (-4,1.5) {$X_2$};
\node [scale=0.7](v4) at (-2,1.5) {$X_m$};
\draw  (v1) --(-3.5,-0.5)[postaction={decorate, decoration={markings,mark=at position 0.47 with {\arrow[black]{stealth}}}}];
\draw  (v3) -- (-3.5,-0.5)[postaction={decorate, decoration={markings,mark=at position 0.47 with {\arrow[black]{stealth}}}}];
\draw  (v4) -- (-3.5,-0.5)[postaction={decorate, decoration={markings,mark=at position 0.47 with {\arrow[black]{stealth}}}}];
\node at (-3,1.0) {$\cdots$};
\node  [scale=0.7]at (-3.5,-1) {$f_2:X_1X_2\cdots  X_m\rightarrow \emptyset$};
\node [scale=0.7] at (-3.5,-2) {dom=$X_1X_2\cdots X_m$, cod=$\emptyset$};
\end{tikzpicture}
\end{matrix}&
\begin{matrix}
\begin{tikzpicture}[scale=0.5]
\node (v5) at (-8.5,-0.5) {};
\draw[fill] (-8.5,-0.5) circle [radius=0.09];
\node [scale=0.7](v6) at (-9.5,-2.5) {$Y_1$};
\node [scale=0.7](v7) at (-8.5,-2.5) {$Y_2$};
\node [scale=0.7](v8) at (-6.5,-2.5) {$Y_n$};
\draw (-8.5,-0.5) -- (v6)[postaction={decorate, decoration={markings,mark=at position 0.6 with {\arrow[black]{stealth}}}}];
\draw  (-8.5,-0.5) -- (v7)[postaction={decorate, decoration={markings,mark=at position 0.6 with {\arrow[black]{stealth}}}}];
\draw (-8.5,-0.5) -- (v8)[postaction={decorate, decoration={markings,mark=at position 0.6 with {\arrow[black]{stealth}}}}];
\node at (-7.8,-2) {$\cdots$};
\node [scale=0.7]at (-8.5,0) {$f_3:\emptyset\rightarrow Y_1 Y_2\cdots Y_n$};
\node [scale=0.7]at (-8.5,-3.5) {dom=$\emptyset$, cod=$Y_1Y_2\cdots Y_n$};
\end{tikzpicture}
\end{matrix}&
\begin{matrix}
\begin{tikzpicture}[scale=0.5]
\node (v2) at (-3.5,-0.5) {};
\draw[fill] (-3.5,-0.5) circle [radius=0.09];
\node  [scale=0.8]at (-3.5,-1.3) {$f_4:\emptyset\rightarrow \emptyset$};
\node [scale=0.7]at (-3.5,-2.5) {dom=$\emptyset$, cod=$\emptyset$};
\node at (-4,1) {};
\end{tikzpicture}
\end{matrix}
\end{matrix}
$$
\caption{}
\label{diagram}
\end{figure}

\end{rem}

$(2)$ Given a monoidal category $\mathcal{M}$, $\mathbf{U}(\mathcal{M})$ is a tensor scheme. $Ob(\mathbf{U}(\mathcal{M}))=Ob(\mathcal{M})-\{I_\mathcal{M}\}$. Clearly, when $\mathcal{M}$ has only one object, $Ob(\mathbf{U}(\mathcal{M}))$ is an empty set. In fact, $Ob(\mathbf{U}(\mathcal{M}))\sqcup\{\emptyset\}$ should be conceptually understood as the quotient set of $Ob(\mathcal{M})$ (as a subset of $W(Ob(\mathcal{M}))$) by the equivalence relation $\sim_o$ in the construction of $\Pi(\mathcal{M})$ in previous section, where $\emptyset$ represents the equivalence class of $I_{\mathcal{M}}$. Similarly, $Mor(\mathbf{U}(\mathcal{M}))$ is the quotient set of $Prim(\mathcal{M})$ (as a subset of $Diag(\mathcal{M})\sqcup\{\bigcirc\}$) by the equivalence relation $\sim_m$ in the construction of $\Pi(\mathcal{M})$. As in previous section, each equivalence class of prime POP-diagrams can be uniquely represented by a residue obtained under the convention that \textbf{removing all edges labelled by $I_\mathcal{M}$}, which is, geometrically, an irreducible prime progressive plane diagram in $\mathcal{M}$, see Fig \ref{diagram1} for examples. The source and target maps are, as in previous section, given by the domain and codomain, respectively.

Given a monoidal functor $\theta: \mathcal{M}_1\rightarrow \mathcal{M}_2$, $\mathbf{U}(\theta):\mathbf{U}(\mathcal{M}_1)\rightarrow\mathbf{U}(\mathcal{M}_2)$ is a morphism of tensor scheme such that (\romannumeral1) on objects, $\mathbf{U}(\theta)(\emptyset)=\emptyset$ and for any $X\in Ob(\mathcal{M})-\{I_\mathcal{M}\}$, if $\theta(X)\not=I_{\mathcal{M}}$, then $\mathbf{U}(\theta)(X)=\theta(X)$, otherwise, $\mathbf{U}(\theta)(X)=\emptyset$; (\romannumeral2) on morphisms, it sends an irreducible prime progressive plane diagram  $[G,\gamma_o,\gamma_m]$ in $\mathcal{M}_1$ to the residue of the "pushforward" $[G,\theta(\gamma_o),\theta(\gamma_m)]$ obtained by removing all edges labelled by $I_{\mathcal{M}_2}$. (The "pushforward" $[G,\theta(\gamma_o),\theta(\gamma_m)]$ is in general \textbf{not} an irreducible progressive plane diagram in $\mathcal{M}_2$.)

\begin{thm}\label{ad}
Defined as above, $\mathbf{F}: \mathbf{Ten.Sch} \rightleftharpoons \mathbf{Mon.Cat}:\mathbf{U}$ is an adjunction. Moreover, $\mathbf{F}\mathbf{U}=\Pi$ and the counit is given by $\epsilon:\Pi\rightarrow Id_{\mathbf{Mon.Cat}}$.
\end{thm}
\begin{proof}
The proof of the adjointness of $\mathbf{F}, \mathbf{U}$ is similar as that of $\mathcal{F}^+, \mathcal{U}^+$ in Theorem \ref{adjo}.
Given a tensor scheme $\mathcal{T}$ and a monoidal category $\mathcal{M}$, we want to show that there is a natural bijection between $Hom(\mathbf{F}(\mathcal{T}), \mathcal{M})$ and $Hom(\mathcal{T}, \mathbf{U}(\mathcal{M}))$. In fact, for any $\vartheta:\mathbf{F}(\mathcal{T})\rightarrow \mathcal{M}$, there is a $\widehat{\vartheta}:\mathcal{T}\rightarrow \mathbf{U}(\mathcal{M})$, whose definition is as follows. For any $X\in Ob(\mathcal{T})$, $\widehat{\vartheta}(X)$ is the equivalence class of $\vartheta(X)$, especially when $\vartheta(X)=I_{\mathcal{M}}$, $\widehat{\vartheta}(X)=\emptyset$; for any $f:X_1X_2\cdots X_m\rightarrow Y_1Y_2\cdots Y_n$ in $Mor(\mathcal{T})$, $\widehat{\vartheta}(f)$ is the residue or the equivalence class (under $\backsim_m$) of the following prime POP-diagram in $\mathcal{M}$
$$
\begin{matrix}
\begin{tikzpicture}[scale=1]
\node (v1) at (0,0) {};
\draw[fill] (0,0) circle [radius=0.055];
\node at (2,0){$\begin{matrix}\vartheta(\begin{matrix}\begin{tikzpicture}[scale=.33]
\node (v1) at (0,0) {};
\draw[fill] (0,0) circle [radius=0.1];
\node [scale=.7] at (0.8,0){$f$};
\node [scale=.7] (v2) at (-2,1.5) {$X_1$};
\node [scale=.7](v3) at (-0.5,1.5) {$X_2$};
\node [scale=.7](v4) at (1,1.5) {$\cdots$};
\node [scale=.7](v5) at (2.5,1.5) {$X_m$};
\node [scale=.7](v6) at (-2,-1.5) {$Y_1$};
\node [scale=.7](v7) at (-0.5,-1.5) {$Y_2$};
\node [scale=.7] at (1,-1.5) {$\cdots$};
\node [scale=.7](v8) at (2.5,-1.5) {$Y_n$};
\draw  (v2) -- (0,0)[postaction={decorate, decoration={markings,mark=at position .50 with {\arrow[black]{stealth}}}}];
\draw  (v3) -- (0,0)[postaction={decorate, decoration={markings,mark=at position .50 with {\arrow[black]{stealth}}}}];
\draw  (v5) -- (0,0)[postaction={decorate, decoration={markings,mark=at position .50 with {\arrow[black]{stealth}}}}];
\draw  (v6) -- (0,0)[postaction={decorate, decoration={markings,mark=at position .50 with {\arrowreversed[black]{stealth}}}}];
\draw  (v7) -- (0,0)[postaction={decorate, decoration={markings,mark=at position .50 with {\arrowreversed[black]{stealth}}}}];
\draw  (v8) -- (0,0)[postaction={decorate, decoration={markings,mark=at position .50 with {\arrowreversed[black]{stealth}}}}];
\end{tikzpicture}\end{matrix})\end{matrix}$};
\node (v2) at (-2,1.5) {$\vartheta(X_1)$};
\node (v3) at (-0.5,1.5) {$\vartheta(X_2)$};
\node (v4) at (1,1.5) {$\cdots$};
\node (v5) at (2.5,1.5) {$\vartheta(X_m)$};
\node (v6) at (-2,-1.5) {$\vartheta(Y_1)$};
\node (v7) at (-0.5,-1.5) {$\vartheta(Y_2)$};
\node at (1,-1.5) {$\cdots$};
\node (v8) at (2.5,-1.5) {$\vartheta(Y_n).$};
\draw  (v2) -- (0,0)[postaction={decorate, decoration={markings,mark=at position .50 with {\arrow[black]{stealth}}}}];
\draw  (v3) -- (0,0)[postaction={decorate, decoration={markings,mark=at position .50 with {\arrow[black]{stealth}}}}];
\draw  (v5) -- (0,0)[postaction={decorate, decoration={markings,mark=at position .50 with {\arrow[black]{stealth}}}}];
\draw  (v6) -- (0,0)[postaction={decorate, decoration={markings,mark=at position .50 with {\arrowreversed[black]{stealth}}}}];
\draw  (v7) -- (0,0)[postaction={decorate, decoration={markings,mark=at position .50 with {\arrowreversed[black]{stealth}}}}];
\draw  (v8) -- (0,0)[postaction={decorate, decoration={markings,mark=at position .50 with {\arrowreversed[black]{stealth}}}}];
\end{tikzpicture}
\end{matrix}
$$
In case that $m=0$, that is, $s(f)=\emptyset$, then $\widehat{\vartheta}(f)$ is the residue or the equivalence class (under $\backsim_m$) of the following prime POP-diagram in $\mathcal{M}$

$$
\begin{matrix}
\begin{tikzpicture}[scale=1]
\node (v1) at (0,0) {};
\draw[fill] (0,0) circle [radius=0.055];
\node at (2,0){$\begin{matrix}\vartheta(\begin{matrix}\begin{tikzpicture}[scale=.33]
\node (v1) at (0,0) {};
\draw[fill] (0,0) circle [radius=0.1];
\node [scale=.7] at (0.8,0){$f$};

\node [scale=.7](v6) at (-2,-1.5) {$Y_1$};
\node [scale=.7](v7) at (-0.5,-1.5) {$Y_2$};
\node [scale=.7] at (1,-1.5) {$\cdots$};
\node [scale=.7](v8) at (2.5,-1.5) {$Y_n$};
\draw  (v6) -- (0,0)[postaction={decorate, decoration={markings,mark=at position .50 with {\arrowreversed[black]{stealth}}}}];
\draw  (v7) -- (0,0)[postaction={decorate, decoration={markings,mark=at position .50 with {\arrowreversed[black]{stealth}}}}];
\draw  (v8) -- (0,0)[postaction={decorate, decoration={markings,mark=at position .50 with {\arrowreversed[black]{stealth}}}}];
\end{tikzpicture}\end{matrix})\end{matrix}$};
\node (v2) at (0,1.5) {$I_{\mathcal{M}}$};

\node (v6) at (-2,-1.5) {$\vartheta(Y_1)$};
\node (v7) at (-0.5,-1.5) {$\vartheta(Y_2)$};
\node at (1,-1.5) {$\cdots$};
\node (v8) at (2.5,-1.5) {$\vartheta(Y_n).$};
\draw  (v2) -- (0,0)[postaction={decorate, decoration={markings,mark=at position .50 with {\arrow[black]{stealth}}}}];

\draw  (v6) -- (0,0)[postaction={decorate, decoration={markings,mark=at position .50 with {\arrowreversed[black]{stealth}}}}];
\draw  (v7) -- (0,0)[postaction={decorate, decoration={markings,mark=at position .50 with {\arrowreversed[black]{stealth}}}}];
\draw  (v8) -- (0,0)[postaction={decorate, decoration={markings,mark=at position .50 with {\arrowreversed[black]{stealth}}}}];
\end{tikzpicture}
\end{matrix}
$$
The case of $n=0$ is similar. In particular, in case that $m=n=0$, $\widehat{\vartheta}(f)=\begin{matrix}
\begin{tikzpicture}[scale=1]
\node (v1) at (0,0) {};
\draw[fill] (0,0) circle [radius=0.055];
\node [right]at (0,0){$\begin{matrix}\vartheta(\begin{matrix}\begin{tikzpicture}[scale=.33]
\draw[fill] (0,0) circle [radius=0.1];
\node [right,scale=.7] at (0,0){$f$};
\end{tikzpicture}\end{matrix})\end{matrix}$};
\end{tikzpicture}
\end{matrix}.$

Conversely, for any $\varphi:\mathcal{T}\rightarrow \mathbf{U}(\mathcal{M})$, there is a $\overline{\varphi}:\mathbf{F}(\mathcal{T})\rightarrow\mathcal{M}$, whose definition is as follows. For any $X_1X_2\cdots X_m\in Ob(\mathbf{F}(\mathcal{T}))$, $\overline{\varphi}(X_1X_2\cdots X_m)=\varphi(X_1)\otimes \varphi(X_2)\otimes \cdots\otimes \varphi(X_m)$, especially $\overline{\varphi}(\emptyset)=I_\mathcal{M}$;  for any progressive plane diagram $\Gamma=[G,\gamma_o,\gamma_m]$ in $\mathcal{T}$ , $\overline{\varphi}(\Gamma)$ is \textbf{the value of} the "pushforward" $[G,\gamma'_o,\gamma'_m]$, where for each edge $e$ of $G$, $\gamma'_o(e)=\varphi(\gamma(e))$, and for each inner node $v$ of $G$, $\gamma'_m(v)$ is \textbf{the value of} $\varphi(\gamma_m(v))$. (Clearly, in general the "pushforward" $[G,\gamma'_o,\gamma'_m]$ is not an irreducible progressive plane diagram in $\mathcal{M}$, however its value is well-defined.) In particular, $\overline{\varphi}(\bigcirc)=Id_{I_{\mathcal{M}}}$.

The naturality and bijectivity of this correspondence are easy to check, which imply that $\mathbf{F}: \mathbf{Ten.Sch} \rightleftharpoons \mathbf{Mon.Cat}:\mathbf{U}$ is an adjunction.  The other facts in this theorem are also easy to check.
\end{proof}

We have constructed an adjunction for $\Pi$. To do the same thing for $\widetilde{\Pi}$, we need the following notions.
\begin{defn}\label{T1}
An \textbf{$\bigcirc$-marked tensor scheme} $\mathcal{T}$ is a tensor scheme with a distinguished element of $ Mor(\mathcal{T})$, denoted as $\bigcirc$, such that  $s(\bigcirc)=t(\bigcirc)=\emptyset$.
\end{defn}

\begin{defn}\label{mor2}
A \textbf{morphism} $\phi:\mathcal{T}_1\rightarrow\mathcal{T}_2$ of $\bigcirc$-marked tensor schemes is a morphism of tensor schemes such that $\phi_m(\bigcirc)=\bigcirc$.
\end{defn}
The category of $\bigcirc$-marked tensor schemes and their morphisms is denoted as $\mathbf{\bigcirc.Ten}$. Now we want to show that there is an adjunction $\widetilde{\mathbf{F}}: \mathbf{\bigcirc.Ten} \rightleftharpoons \mathbf{Mon.Cat}:\widetilde{\mathbf{U}}$, whose definition is as follows.

$(1)$ Given an $\bigcirc$-marked tensor scheme $\mathcal{T}$, $\widetilde{\mathbf{F}}(\mathcal{T})$ is the quotient monoidal category of $\mathbf{F}(\mathcal{T})$ defined by  the property that for any monoidal functor $\vartheta:\mathcal{F}(\mathcal{T})\rightarrow \mathcal{N}$ with $\vartheta(\begin{matrix}
\begin{tikzpicture}[scale=1]
\node (v1) at (0,0) {};
\draw[fill] (0,0) circle [radius=0.055];
\node [right]at (0,0){$\bigcirc$};
\end{tikzpicture}
\end{matrix})=Id_{I_\mathcal{N}}$, there exists a unique monoidal functor $\widehat{\vartheta}:\mathbf{F}(\mathcal{T})\rightarrow\mathcal{N}$ such that the following diagram commutes
$$\xymatrix{\mathbf{F}(\mathcal{T})\ar[rr]^{\widetilde{\pi}_{\mathcal{T}}}\ar[rrd]^(0.6){\vartheta}&&\widetilde{\mathbf{F}}(\mathcal{T})\ar[d]^{\exists !\widehat{\vartheta}}\\ &&\mathcal{N},}$$
where $\widetilde{\pi}_{\mathcal{T}}$ is the quotient monoidal functor. On objects, $\widetilde{\pi}_{\mathcal{T}}$ is the identity map; on morphisms, it turns out that $\widetilde{\pi}_{\mathcal{T}}$ is given by the convention that removing all isolated vertices labelled by $\bigcirc$.

Given a morphism $\phi:\mathcal{T}_1\rightarrow\mathcal{T}_2$ of $\bigcirc$-marked tensor schemes,  $\widetilde{\mathbf{F}}(\phi)$ is defined to be the unique monoidal functor $\widetilde{\mathbf{F}}(\phi):\widetilde{\mathbf{F}}(\mathcal{T}_1)\rightarrow \widetilde{\mathbf{F}}(\mathcal{T}_2)$ such that the following diagram commutes $$\xymatrix{\mathbf{F}(\mathcal{T}_1)\ar[rr]^{\mathbf{F}(\phi)}\ar[d]^{\widetilde{\pi}_{\mathcal{T}_1}}&&\ar[d]^{\widetilde{\pi}_{\mathcal{T}_2}}\mathbf{F}(\mathcal{T}_2)\\ \widetilde{\mathbf{F}}(\mathcal{T}_1)\ar[rr]^{\widetilde{\mathbf{F}}(\phi)}&&\widetilde{\mathbf{F}}(\mathcal{T}_2).}$$

$(2)$ Given a monoidal category $\mathcal{M}$, $\widetilde{\mathbf{U}}(\mathcal{M})=\mathbf{U}(\mathcal{M})$ with $\begin{matrix}
\begin{tikzpicture}[scale=1]
\node (v1) at (0,0) {};
\draw[fill] (0,0) circle [radius=0.055];
\node [right]at (0,0){$Id_{I_{\mathcal{M}}}$};
\end{tikzpicture}
\end{matrix}$ as the distinguished element in $Mor(\widetilde{\mathbf{U}}(\mathcal{M}))$, that is, $\begin{matrix}
\begin{tikzpicture}[scale=1]
\node (v1) at (0,0) {};
\draw[fill] (0,0) circle [radius=0.055];
\node [right]at (0,0){$Id_{I_{\mathcal{M}}}$};
\end{tikzpicture}
\end{matrix}=\bigcirc$. For any monoidal functor $\theta: \mathcal{M}_1\rightarrow \mathcal{M}_2$, $\widetilde{\mathbf{U}}(\theta)=\mathbf{U}(\theta)$.

The following result can be directly checked.
\begin{thm}\label{ad2}
Defined as above, $\widetilde{\mathbf{F}}: \mathbf{\bigcirc.Ten} \rightleftharpoons \mathbf{Mon.Cat}:\widetilde{\mathbf{U}}$ is an adjunction. Moreover, $\widetilde{\mathbf{F}}\widetilde{\mathbf{U}}=\widetilde{\Pi}$ and the counit is given by $\widetilde{\epsilon}:\widetilde{\Pi}\rightarrow Id_{\mathbf{Mon.Cat}}$.
\end{thm}

As in previous section, we can go further.   Fixing an $\Omega$, we can consider an adjunction  $\mathbf{F}_\Omega: \mathbf{Ten.Sch} \rightleftharpoons \mathbf{\Omega.Mon}:\mathbf{U}_\Omega$, where both $\mathbf{F}_\Omega$ and $\mathbf{U}_\Omega$ should be given by $\Omega$-convention. Applying on a tensor scheme, $\mathbf{F}_\Omega$ will produce an $\Omega$ monoidal category, which is relatively free with respect to the full subcategory $\mathbf{\Omega.Mon}$ of $\mathbf{Mon.Cat}$.

\section*{Acknowledgement}
Xuexing Lu would like to thank Ross Street for comments and encouragement and John Power for encouragement and a lot of concrete advice for improving this paper.  He also would like to thank  Maurizio Patrignani for careful check of the proofs in an earlier version of this paper. He also thanks Jinsong Wu for reminding him to change the terminologies and pointing out some mistakes in an earlier version of this paper.
This work is partly supported by the National Scientific Foundation of China No.11431010 and 11571329 and  "the Fundamental Research Funds for the Central Universities".

%%%%%%%%%%%%%%%%%%%%%%%%%%%%%%%%%%%%%%%%%%%%%%%

\textbf{Xuexing Lu}\hfill \\  Email: xxlu@mail.ustc.edu.cn

\textbf{Yu Ye} \hfill \\ Email: yeyu@ustc.edu.cn

\textbf{Sen Hu}\hfill \\  Email: shu@ustc.edu.cn


\begin{thebibliography}{1}
\bibitem{[A96]}
D.~Archdeacon.
\newblock Topological graph theory: a survey.
\newblock {\em  Congressus Numerantium}, 115:5-54, 1996.

\bibitem{[B93]}
 A.~Burroni.
\newblock Higher-dimensional word problems with applications to equational logic.
\newblock  {\em  Theoretical Computer Science} 115(1):43-62, 1993.

\bibitem{[BFR71]}
K.A.~Baker, P.~Fishburn and F.S.~Roberts.
\newblock Partial orders of dimension $2$.
\newblock {\em Networks}, 2:11-28, 1971.




\bibitem{[BS10]}
J.C.~Baez and M.~Stay.
\newblock Physics, topology, logic and computation: a Rosetta Stone.
\newblock  In: Coecke B. (eds) {\em New Structures for Physics.} Lecture Notes in Physics, vol 813. Springer, Berlin, Heidelberg, 2010.


\bibitem{[BT88]}
G.~Di~Battista and R.~Tamassia.
\newblock Algorithms for plane representations of acyclic digraphs.
\newblock {\em Theoretical Computer Science}, 61(2-3):175-198, 1988.

\bibitem{[DF04]}
V.~Drensky and E.~Formanek.
\newblock Polynomial Identity Rings.
\newblock {\em Advanced Courses in Mathematics}. CRM Barcelona, Birkh$\ddot{a}$user Verlag, Basel (2004).


\bibitem{[FM96]}
H.de~Fraysseix and P.O.de~Mendez.
\newblock Planarity and edge poset dimension.
\newblock {\em European Journal of Combinatorics}, 17(8):731-740, 1996.

\bibitem{[GT95]}
A.~Garg and R.~Tamassia.
\newblock Upward planarity testing.
\newblock {\em Order}, 12(2):109-133, 1995.

\bibitem{[JS88]}
A.~Joyal and R.~Street.
\newblock Planar diagrams and tensor algebra.
\newblock {\em Unpublished manuscript}, 1988.

\bibitem{[JS91]}
A.~Joyal and R.~Street.
\newblock The geometry of tensor calculus, {I}.
\newblock {\em Advances in Mathematics}, 88(1):55-112, 1991.

\bibitem{[K87]}
D.~Kelly.
\newblock Fundamentals of planar ordered sets.
\newblock {\em Discrete Mathematics}, 63(2):197-216, 1987.



\bibitem{[LEC66]}
A. ~Lempel, S.~Even  and I.~Cederbaum.
\newblock A algorithm for planarity testing of graphs.
\newblock {\em Theory of Graphs, International Symposium}, pages 215-232, 1966.


\bibitem{[LY16]}
X.~Lu and Y.~Ye.
\newblock Combinatorial characterization of upward planarity.
\newblock to appear in {\em Communications in Mathematics and Statistics}, arXiv:1608.07255.



\bibitem{[P90]}
A.J.~Power.
\newblock A $2$-categorical pasting theorem.
\newblock  {\em Journal of algebra}, 129:439-445, 1990.


\bibitem{[S11]}
P.~Selinger.
\newblock A survey of graphical languages for monoidal categories.
\newblock {\em New structures for physics}, Lecture Notes in Physics, Springer, 813:289-355, 2011.


\bibitem{[S76]}
R.~Street.
\newblock Limits indexed by category-valued $2$-functors.
\newblock {\em Journal of Pure and Applied Algebra}, 8:149-181, 1976.



\bibitem{[S12]}
R.~Street.
\newblock Monoidal categories in, and linking, geometry and algebra.
\newblock {\em Bull. Belg. Math. Soc. Simon Stevin}, 19(5):769-820, 2012.


\bibitem{[W13]}
C.~Wingfield.
\newblock Graphical foundations for diagogue games.
\newblock {\em Ph.D. thesis},  2013.

\end{thebibliography}
\end{document}